\theoremstyle{plain}
\newtheorem{theorem}{Theorem}[section]
\newtheorem{lemma}[theorem]{Lemma}
\newtheorem{proposition}[theorem]{Proposition}
\newtheorem{corollary}[theorem]{Corollary}
\newtheorem{definition}[theorem]{Definition} 
\theoremstyle{definition}
\newtheorem{remark}[theorem]{Remark}
\newtheorem{claim}[theorem]{Claim}
\title{Non-Hausdorff manifolds over locally ordered spaces via sheaf theory}
\author[1]{Yorgo Chamoun\thanks{Email : yorgo.chamoun@polytechnique.edu}}
\author[1]{Emmanuel Haucourt}
\affil[1]{École Polytechnique, Laboratoire d’informatique (LIX), Bâtiment Alan Turing, 1 rue Honoré d’Estienne d’Orves, 91120 Palaiseau Cedex, France}
\date{}
\begin{document}

\maketitle

\begin{abstract}
  \emph{Locally ordered spaces} can be used as topological models of concurrent programs: the local order models the irreversibility of time during execution. Under certain conditions, one can even work with \emph{locally ordered manifolds}. In this paper, we build the universal \emph{euclidean} local order over every locally ordered space; in categorical terms, the subcategory of euclidean local orders is \emph{coreflective} in the category of locally ordered spaces. Our construction is based on a well-known correspondance between \emph{sheaves} and \emph{étale bundles}. This is a far reaching generalization of a result about \emph{realizations} of graph products. We particularize the construction to locally ordered realization of \emph{precubical sets}, and show that it admits a purely combinatorial description. With the same proof techniques, we show that, unlike for the topological realization, there is a unique (up to symmetry) precubical set whose locally ordered realization is isomorphic to $\R^n$.\\

\end{abstract}

\section{Introduction}

%~ Contrary to what the title might suggest, the theoretical developments contained in this article are a computer science issue: the modeling of concurrent programs.
The theoretical developments contained in this article are motivated by a computer science question, namely the modeling of concurrent programs. The purpose of this introduction is to clarify the context. 

\paragraph{Parallel composition of processes.} 
A \emph{concurrent program} is a \emph{parallel composition} (the operator is denoted by the vertical bar \(|\:\)) of \emph{processes} (\ie instances of sequential programs) possibly containing instructions that force synchronization (\emph{e.g.} the \emph{wait} instruction \texttt{W}) or prevent simultaneous execution (\emph{e.g.} the \emph{lock}/\emph{free} instructions \texttt{P}/\texttt{V}). 
Such kind of instructions appear in most of modern high-level languages (\emph{e.g.} Rust, Go, Java, OCaml) and assemblers (\emph{e.g.} x86-64, AArch64, RISC-V). 
As a common thread, we will study the programs
\begin{equation}\label{eqn:programs}
\texttt{P(m);V(m)\:|\:P(m);V(m)}\quad\text{and}\quad\texttt{W(b)\:|\:W(b)}\:. 
\end{equation}
In the first case, the processes try to acquire (\texttt{P(m)}) and release (\texttt{V(m)}) a resource \texttt{m}; in the second one, they wait \(\texttt{W(b)}\) behind a barrier \texttt{b} which forces them to synchronize.

\paragraph{Higher dimensional automata.}
An \emph{automaton} on an alphabet \(\Sigma\) is a $\Sigma$-labelled finite graph, i.e.~a morphism \(f\) from a finite graph to the \(\Sigma\)-\emph{bouquet} (\ie the graph with a single vertex whose set of arrows is \(\Sigma\)); assuming that \(\Sigma\) is the set of available instructions, each arrow \(a\) represents the execution of \(f(a)\).
Processes are naturally interpreted by automata. 
Now graphs admit a higher dimensional generalization, namely \emph{precubical sets} (\S\ref{sec:precubical_sets}), where we still have vertices and edges but also $n$-cubes for any natural number $n$.
Given that graphs are \(1\)-dimensional precubical sets, one defines a \emph{higher dimensional \(\Sigma\)-automaton} (\emph{HDA} for short) as a labelled precubical set, i.e.~a morphism \(f\) from a finite precubical set to the \emph{higher dimensional \(\Sigma\)-bouquet} (\ie the precubical set whose \(n\)-dimensional elements are the words of length \(n\) on \(\Sigma\)). 
A cube \(a\) represents the simultaneous execution of instructions \(i_1,\ldots,i_n\) with \(n=\dim(a)\) and \(f(a)=(i_1,\ldots,i_n)\); such a tuple is called a \emph{multi-instruction}. 
The \emph{tensor product} \(\otimes\) of precubical sets (\S\ref{sec:precubical_sets}) extends to higher dimensional automata, thus providing the interpretation of the parallel composition operator: 
%~ if the automata \(A_1,\ldots,A_n\) represent 
given the processes \(P_1,\ldots,P_n\), 
%~ then \(A_1\otimes\ldots\otimes A_n\) represent the concurrent program \(P_1|\ldots|P_n\); more concisely 
we have \(\interpret{P_1|\ldots|P_n}=\interpret{P_1}\otimes\ldots\otimes\interpret{P_n}\) with \(\interpret{P}\) denoting the interpretation of the program (\resp process) \(P\) as an HDA. 
For example
\begin{center}
\begin{tikzpicture}
\fill(0.2,0) circle (0.025);
\fill(0,0.2) circle (0.025);
\fill(0,1) circle (0.025);
\fill(1,0) circle (0.025);
\draw[->,>=stealth,shorten >=0.7mm,shorten <=0.7mm] (0,0.2) -- (0,1);
\draw[->,>=stealth,shorten >=0.7mm,shorten <=0.7mm] (0.2,0) -- (1,0);
\draw (-0.15,0.55) node{\scriptsize b};
\draw (0.55,-0.15) node{\scriptsize a};
\draw(0,0) node {\(\otimes\)};
\draw (1.8,0.5) node {\(=\)};
\begin{scope}[xshift=30mm]
\fill(0,0) circle (0.025);
\fill(0,1) circle (0.025);
\fill(1,0) circle (0.025);
\fill(1,1) circle (0.025);
\draw[->,>=stealth,shorten >=0.7mm,shorten <=0.7mm] (0,0) -- (0,1);
\draw[->,>=stealth,shorten >=0.7mm,shorten <=0.7mm] (0,0) -- (1,0);
\draw[->,>=stealth,shorten >=0.7mm,shorten <=0.7mm] (0,1) -- (1,1);
\draw[->,>=stealth,shorten >=0.7mm,shorten <=0.7mm] (1,0) -- (1,1);
\fill[mygray] (0.7mm,0.7mm) rectangle (9.3mm,9.3mm);
\draw (0.5,0.5) node{\scriptsize ab};
\draw (-0.15,0.5) node{\scriptsize b};
\draw (1.15,0.5) node{\scriptsize b};
\draw (0.5,-0.15) node{\scriptsize a};
\draw (0.5,1.15) node{\scriptsize a};
\end{scope}
\end{tikzpicture}
\end{center}
\paragraph{Atomicity}
Instructions like \(\texttt{P}\), \(\texttt{V}\), and \(\texttt{W}\) 
are supposed to be \emph{atomic} in the sense that they behave as if 
their execution time was null. 
Taking this specificity into account requires a little subtler 
interpretation: atomic instructions have to be distinguished from 
ordinary ones in the definition a bouquet.   
Formally we write \(\Sigma=\Sigma_0\sqcup\Sigma_1\) with \(\Sigma_0=\set{\text{atomic instructions}}\),
%and \(\Sigma_1=\Sigma\setminus\Sigma_0\), 
and define the higher dimensional \(\Sigma\)-bouquet as the precubical set whose \(n\)-dimensional elements are the words on $\Sigma$
%\(\Sigma_0\sqcup(\Sigma_0\times\Sigma_1\times\Sigma_0)\) 
containing exactly \(n\) elements in $\Sigma_1$.
%\(\Sigma_0\times\Sigma_1\times\Sigma_0\). 
%For automata, it means that vertices (\resp arrows) are labelled by atomic (\resp ordinary) instructions. 
In particular, vertices can be labelled by atomic instructions.
In this setting, the interpretation of our toy programs (\ref{eqn:programs}) are
\begin{center}
\begin{tikzpicture}[scale=0.8]
\fill(0,0) circle (0.04);
\fill(0,1) circle (0.04);
\fill(0,2) circle (0.04);
\fill(0,3) circle (0.04);
\fill(1,0) circle (0.04);
\fill(1,1) circle (0.04);
\fill(1,2) circle (0.04);
\fill(1,3) circle (0.04);
\fill(2,0) circle (0.04);
\fill(2,1) circle (0.04);
\fill(2,2) circle (0.04);
\fill(2,3) circle (0.04);
\fill(3,0) circle (0.04);
\fill(3,1) circle (0.04);
\fill(3,2) circle (0.04);
\fill(3,3) circle (0.04);
\draw[->,>=stealth,shorten >=0.7mm,shorten <=0.7mm] (0,0) -- (0,1);
\draw[->,>=stealth,shorten >=0.7mm,shorten <=0.7mm] (0,1) -- (0,2);
\draw[->,>=stealth,shorten >=0.7mm,shorten <=0.7mm] (0,2) -- (0,3);
\draw[->,>=stealth,shorten >=0.7mm,shorten <=0.7mm] (1,0) -- (1,1);
\draw[->,>=stealth,shorten >=0.7mm,shorten <=0.7mm] (1,1) -- (1,2);
\draw[->,>=stealth,shorten >=0.7mm,shorten <=0.7mm] (1,2) -- (1,3);
\draw[->,>=stealth,shorten >=0.7mm,shorten <=0.7mm] (2,0) -- (2,1);
\draw[->,>=stealth,shorten >=0.7mm,shorten <=0.7mm] (2,1) -- (2,2);
\draw[->,>=stealth,shorten >=0.7mm,shorten <=0.7mm] (2,2) -- (2,3);
\draw[->,>=stealth,shorten >=0.7mm,shorten <=0.7mm] (3,0) -- (3,1);
\draw[->,>=stealth,shorten >=0.7mm,shorten <=0.7mm] (3,1) -- (3,2);
\draw[->,>=stealth,shorten >=0.7mm,shorten <=0.7mm] (3,2) -- (3,3);
\draw[->,>=stealth,shorten >=0.7mm,shorten <=0.7mm] (0,0) -- (1,0);
\draw[->,>=stealth,shorten >=0.7mm,shorten <=0.7mm] (1,0) -- (2,0);
\draw[->,>=stealth,shorten >=0.7mm,shorten <=0.7mm] (2,0) -- (3,0);
\draw[->,>=stealth,shorten >=0.7mm,shorten <=0.7mm] (0,1) -- (1,1);
\draw[->,>=stealth,shorten >=0.7mm,shorten <=0.7mm] (1,1) -- (2,1);
\draw[->,>=stealth,shorten >=0.7mm,shorten <=0.7mm] (2,1) -- (3,1);
\draw[->,>=stealth,shorten >=0.7mm,shorten <=0.7mm] (0,2) -- (1,2);
\draw[->,>=stealth,shorten >=0.7mm,shorten <=0.7mm] (1,2) -- (2,2);
\draw[->,>=stealth,shorten >=0.7mm,shorten <=0.7mm] (2,2) -- (3,2);
\draw[->,>=stealth,shorten >=0.7mm,shorten <=0.7mm] (0,3) -- (1,3);
\draw[->,>=stealth,shorten >=0.7mm,shorten <=0.7mm] (1,3) -- (2,3);
\draw[->,>=stealth,shorten >=0.7mm,shorten <=0.7mm] (2,3) -- (3,3);

\draw[->,>=stealth,shorten >=0.7mm,shorten <=0.7mm] (0,0) -- (1,0);
\draw[->,>=stealth,shorten >=0.7mm,shorten <=0.7mm] (0,1) -- (1,1);
\draw[->,>=stealth,shorten >=0.7mm,shorten <=0.7mm] (1,0) -- (1,1);
\fill[mygray] (0.7mm,0.7mm) rectangle (9.3mm,9.3mm);
\fill[mygray] (10.7mm,0.7mm) rectangle (19.3mm,9.3mm);
\fill[mygray] (20.7mm,0.7mm) rectangle (29.3mm,9.3mm);
\fill[mygray] (0.7mm, 10.7mm) rectangle (9.3mm, 19.3mm);
\fill[mygray] (10.7mm,10.7mm) rectangle (19.3mm,19.3mm);
\fill[mygray] (20.7mm,10.7mm) rectangle (29.3mm,19.3mm);
\fill[mygray] (0.7mm, 20.7mm) rectangle (9.3mm, 29.3mm);
\fill[mygray] (10.7mm,20.7mm) rectangle (19.3mm,29.3mm);
\fill[mygray] (20.7mm,20.7mm) rectangle (29.3mm,29.3mm);

\draw (-0.6,1) node{\scriptsize\texttt{P(m)}};
\draw (-0.6,2) node{\scriptsize\texttt{V(m)}};
\draw (1,-0.6) node[rotate=-90]{\scriptsize\texttt{P(m)}};
\draw (2,-0.6) node[rotate=-90]{\scriptsize\texttt{V(m)}};
\begin{scope}[xshift=6.5cm]
\draw[densely dotted] (0,0) rectangle (3,3);
\fill(0,0) circle (0.04);
\fill(0,1.5) circle (0.04);
\fill(0,3) circle (0.04);
\fill(1.5,0) circle (0.04);
\fill(1.5,1.5) circle (0.04);
\fill(1.5,3) circle (0.04);
\fill(3,0) circle (0.04);
\fill(3,1.5) circle (0.04);
\fill(3,3) circle (0.04);
\draw (-0.6,1.5) node{\scriptsize\texttt{W(b)}};
\draw (1.5,-0.6) node[rotate=-90]{\scriptsize\texttt{W(b)}};
% horizontal arrows
\draw[->,>=stealth,shorten >=0.7mm,shorten <=0.7mm] (0  ,0  ) -- (0  ,1.5);
\draw[->,>=stealth,shorten >=0.7mm,shorten <=0.7mm] (0  ,1.5) -- (0  ,3  );
\draw[->,>=stealth,shorten >=0.7mm,shorten <=0.7mm] (1.5,0  ) -- (1.5,1.5);
\draw[->,>=stealth,shorten >=0.7mm,shorten <=0.7mm] (1.5,1.5) -- (1.5,3  );
\draw[->,>=stealth,shorten >=0.7mm,shorten <=0.7mm] (3  ,0  ) -- (3  ,1.5);
\draw[->,>=stealth,shorten >=0.7mm,shorten <=0.7mm] (3  ,1.5) -- (3  ,3  );
% vertical arrows
\draw[->,>=stealth,shorten >=0.7mm,shorten <=0.7mm] (0  ,0  ) -- (1.5,0  );
\draw[->,>=stealth,shorten >=0.7mm,shorten <=0.7mm] (1.5,0  ) -- (3  ,0  );
\draw[->,>=stealth,shorten >=0.7mm,shorten <=0.7mm] (0  ,1.5) -- (1.5,1.5);
\draw[->,>=stealth,shorten >=0.7mm,shorten <=0.7mm] (1.5,1.5) -- (3  ,1.5);
\draw[->,>=stealth,shorten >=0.7mm,shorten <=0.7mm] (0  ,3  ) -- (1.5,3  );
\draw[->,>=stealth,shorten >=0.7mm,shorten <=0.7mm] (1.5,3  ) -- (3  ,3  );
\fill[mygray] (0.7mm,0.7mm) rectangle (14.3mm,14.3mm);
\fill[mygray] (15.7mm,15.7mm) rectangle (29.3mm,29.3mm);
\fill[mygray] (15.7mm,0.7mm) rectangle (29.3mm,14.3mm);
\fill[mygray] (0.7mm,15.7mm) rectangle (14.3mm,29.3mm);
\end{scope}
\end{tikzpicture}
\end{center}
The collection of \emph{higher dimensional automata} thus contains all automata (seen as the \(1\)-dimensional HDA) and provide a very expressive model of `true' concurrency~\cite{Pratt91,vanGlabbeek91,vanGlabbeek06}.  

\paragraph{From discrete to continuous.} 
Precubical sets with \emph{degeneracies}, \emph{a.k.a cubical sets} in algebraic topology, provide models for combinatorial homotopy theory, which is related to usual homotopy theory of spaces by the \emph{geometric realization} functor \(|\_|\)~\cite[p.~371-372]{BHS11}. 
However, in the setting of concurrency theory, spaces are not enough since they lack a notion of directedness. One of the many possible solutions is to add a notion of \emph{local order}: a \emph{locally ordered space} (\S\ref{sec:locally_ordered_spaces}) is a space equipped with an open base of ordered sets, and a \emph{directed path} is a continuous path which is locally increasing. We again get a \emph{locally ordered realization} functor \(|\_|_{lo}\) (\S\ref{sec:lorps}) satisfying \(|\_|=S\circ|\_|_{lo}\) with \(S\) the forgetful functor 
%assigning to every locally ordered space its underlying 
into topological spaces.
%The latter preserves `local causality' in the sense that the  (\S\ref{sec:locally_ordered_spaces}). 
%~ the geometric realization of a precubical set morphism (taken in a certain full subcategory of precubical sets) is a \emph{locally order preserving map} (\S\ref{sec:lorps}). 
%
%~ Under mild hypotheses the locally ordered realization functor is \emph{monoidal} in the sense that it turns tensor products into cartesian ones. 
%
In addition, the tensor product is associative (up to isomorphism), and satisfies 
\(|K\otimes K'|_{lo}=|K|_{lo}\times|K'|_{lo}\) for every finite 
precubical sets \(K\) and \(K'\), so we define the \emph{continuous representation} 
of every concurrent program \(P_1|\cdots|P_n\) as the cartesian product 
\(|\interpret{P_1}|_{lo}\times\cdots\times|\interpret{P_n}|_{lo}\). 
In particular we have 
\begin{equation}\label{eqn:continuous_representation}
|(\cdot\to\cdots\to\cdot)^{\otimes^n}|_{lo}\cong([0,1],\leqslant)^n\hspace{3mm}\text{for 
every \(n\not=0\).} 
\end{equation}

\paragraph{Execution traces.} 
%~ A graph made of finitely many consecutive arrows, \ie 
%~ \(\{\cdot\to\cdots\to\cdot\}\), is said to be \emph{linear}. 
%~ A \emph{path} on a graph \(G\) is a morphism from a linear graph to 
%~ \(G\). 
%~ By extension, a \emph{path} on an automaton \(A\) with underlying graph 
%~ \(G\) is a composite \(A\circ\gamma\) with \(\gamma\) a path on \(G\); 
%~ such a path induces a finite sequence of instructions that we call an 
%~ \emph{execution trace} of \(A\) when it satisfies the constraints 
%~ imposed by the synchronization and control flow instructions. 
%%%~ Observe that two linear graphs are isomorphic iff they have the same 
%%%~ number of arrows, which is a bit inconvenient when it comes to 
%%%~ defining paths on an automaton, and becomes much worse with higher 
%%%~ dimensional ones. 
%~ A precubical morphism preserves dimension; if its source is linear then 
%~ it only visits elements of dimensions \(0\) and \(1\). In particular, 
%~ every path on the second toy program (\ref{eqn:programs}) induces a 
%~ sequence of instruction in which one of the processes tries to executes 
%~ the instruction \texttt{W(b)} before the other, which is forbidden by 
%~ the semantics of the wait instruction. 
%~ The continuous representation of this program is 
%~ \(([0,1],\leqslant)^2\), on which the diagonal is a directed path 
%~ inducing the execution trace \(\texttt{W(b)}|\texttt{W(b)}\). 
%~ We assign a sequence of \emph{multi-instructions} (\ie tuples of 
%~ instructions) to every directed path on the continuous representation 
%~ of a program. 
A graph made of finitely many consecutive arrows, \ie 
\(\{\cdot\to\cdots\to\cdot\}\), is said to be \emph{linear}; by extension, a process 
\(P\) is said to be \emph{linear} when so is the underlying graph of 
\(\interpret P\). 
Let \(G\) be a graph. 
A \emph{path} on \(G\) (\resp an automaton \(A\) over \(G\)) is a morphism \(\gamma:L\to G\) (\resp \(A\circ\gamma\)) with 
\(L\) linear.
%~ ; by extension, a \emph{path} on an automaton \(A\) over 
%~ the graph \(G\) is a composite \(A\circ\gamma\) with \(\gamma\) a path 
%~ on \(G\). 
The execution traces of a process \(P\) can be seen as paths on 
\(\interpret{P}\), hence the importance of this notion. However, it is 
unsufficient for concurrent programs because a path (as defined above) 
only visits elements of dimension \(0\) and \(1\), hence does not take into account parallel executions of instructions. 
The concept of \emph{paths on HDA} was introduced to address this issue 
\cite[Def.~17]{vanGlabbeek06}. The approach is purely combinatorial, 
but every path on an HDA \(A\) is obtained as the 
\emph{discretization} of some \emph{directed path} on \(|K|_{lo}\) 
(with \(K\) the underlying precubical set of \(A\)) as suggested by the 
following picture: 
\begin{center}
\begin{tikzpicture}[scale=0.8]
\foreach \x in {0, 1, 2} {
  \foreach \y in {0, 1, 2} {
    \fill (\x,\y) circle (0.03);
  }
}
\foreach \a in {0, 1, 2}
\foreach \x in {0, 1} {
  \draw[->,shorten <=2,shorten >=2,> = stealth] (\x,\a) -- (\x+1,\a);
  \draw[->,shorten <=2,shorten >=2,> = stealth] (\a,\x) -- (\a,\x+1);
}
\foreach \x in {0.1, 1.1} {
  \foreach \y in {0.1, 1.1} {
    \fill[opacity = 0.2] (\x,\y) -- (\x+0.8,\y) -- (\x+0.8,\y+0.8) -- (\x,\y+0.8) -- cycle ;
  }
}
\draw (0,1);
\draw (1,0);
\draw (1,1);
\draw (2.95,1) node {\(\mapsto\)};
\draw[very thick, opacity = 0.3, rounded corners] (0,0) -- (0.5,0.5) -- (1.5,0.5) -- (1.5,1.5) -- (2,2);
\begin{scope}[xshift=3.75cm]
\fill (0,0) circle (0.03);
\fill (2,2) circle (0.03);
\draw[->,shorten <=2,shorten >=2,> = stealth] (1,0) -- (1,1);
\draw[->,shorten <=2,shorten >=2,> = stealth] (1,1) -- (2,1);
\foreach \x / \y in {0.1 / 0.1 , 1.1 / 0.1 , 1.1 / 1.1} {
    \fill[opacity = 0.2] (\x,\y) -- (\x+0.8,\y) -- (\x+0.8,\y+0.8) -- (\x,\y+0.8) -- cycle ;
}
\end{scope}
\begin{scope}[xshift=9cm]
\foreach \x in {0, 1, 2} {
  \foreach \y in {0, 1, 2} {
    \fill (\x,\y) circle (0.03);
  }
}
\foreach \a in {0, 1, 2}
\foreach \x in {0, 1} {
  \draw[->,shorten <=2,shorten >=2,> = stealth] (\x,\a) -- (\x+1,\a);
  \draw[->,shorten <=2,shorten >=2,> = stealth] (\a,\x) -- (\a,\x+1);
}
\foreach \x in {0.1, 1.1} {
  \foreach \y in {0.1, 1.1} {
    \fill[opacity = 0.2] (\x,\y) -- (\x+0.8,\y) -- (\x+0.8,\y+0.8) -- (\x,\y+0.8) -- cycle ;
  }
}
\draw (0,1);
\draw (1,0);
\draw (1,1);
\draw (2.95,1) node {\(\mapsto\)};
%~ \draw[very thick, opacity = 0.3, rounded corners] (0,0) -- (0.5,0.5) -- (1.5,0.5) -- (1.5,1.5) -- (2,2);
\draw[very thick, opacity = 0.3, rounded corners] (0,0) --  (2,2);
\end{scope}
\begin{scope}[xshift=12.75cm]
\fill (0,0) circle (0.03);
\fill (1,1) circle (0.03);
\fill (2,2) circle (0.03);
%~ \draw[->,shorten <=2,shorten >=2,> = stealth] (1,0) -- (1,1);
%~ \draw[->,shorten <=2,shorten >=2,> = stealth] (1,1) -- (2,1);
\foreach \x / \y in {0.1 / 0.1 ,  1.1 / 1.1} {
    \fill[opacity = 0.2] (\x,\y) -- (\x+0.8,\y) -- (\x+0.8,\y+0.8) -- (\x,\y+0.8) -- cycle ;
}
\end{scope}
\end{tikzpicture}
\end{center}
\paragraph{Geometric models.} 
%~ The concept of geometric model is used here solely to motivate the 
%~ study conducted in this article.
In the early years of geometry of concurrency, only parallel 
composition of linear processes were considered; 
models of such programs are finite unions of \(n\)-boxes, \ie set 
products \(I_1\times\cdots\times I_n\) with \(I_k\) nonempty intervals 
\cite{carson87geometry}. 
The combined use of topology and order in concurrency theory stems from 
this fact \cite{fajstrup2006algebraic}.
%~ , and provides a framework in which the linearity constraint 
%~ is no longer necessary. 
By carefully interpreting the synchronization instructions \texttt{P(\_)}, \texttt{V(\_)}, and \texttt{W(\_)}, one defines the \emph{geometric model} of any parallel composition \(P_1|\ldots|P_n\) of \emph{conservative} processes as a locally ordered subspace of \(|\interpret{P_1}|_{lo}\times\cdots\times|\interpret{P_n}|_{lo}\) \cite[Def.~4.1 and 6.2]{haucourt2018geo}, see Fig.~\ref{fig:geometric_models_of_toy_programs}. 
The metric of \(\mathbb{R}\) induces a metric on \(|G|_{lo}\) 
for every graph \(G\), and by extension 
%~ it induces a metric on \(|\interpret{P_1}|_{lo}\times\cdots\times|\interpret{P_n}|_{lo}\) and 
on the geometric model of \(P_1|\ldots|P_n\). 
In such a model, two directed paths that are close according to the 
\emph{uniform distance} induce equivalent sequences of multi-instructions, \ie the state of the `ambient system' (virtual machine / hardware) is the same at the end of their executions, provided it was the same at the beginning \cite[Thm.~6.1]{haucourt2018geo}. 
%~ For example, the geometric models of the toy programs 
%~ (\ref{eqn:programs}) are depicted below, with dotted directed paths 
%~ and their corresponding execution traces. 
\begin{figure}
  \begin{center}
    
\begin{tikzpicture} 
\begin{scope}
\def \eps {0.1};
\def \axs {-0.3};
\def \instshft {0.5};
  \foreach \x / \inst in {
    0/{}, 
    1/{\texttt{P(m)}}, 
    2/{\texttt{V(m)}}, 
    } {
    \draw (\x,\axs-\instshft) node[rotate=-90]{\scriptsize \inst};
    \fill (\x,\axs) circle (0.03);
  }
  \foreach \x / \inst in {
    0/{}, 
    1/{\texttt{P(m)}}, 
    2/{\texttt{V(m)}}, 
    } {
    \draw (\axs-\instshft,\x) node{\scriptsize\inst};
    \fill (\axs,\x) circle (0.03);
  }
\draw[->,>=stealth] (0,\axs) -- (3,\axs) ;
\draw[->,>=stealth] (\axs,0) -- (\axs,3) ;
{
\draw (-0.3,-0.3) node {$\times$}; 
\draw[densely dotted]  plot [smooth] coordinates 
{ 
  (0.2,0.1)  
  (0.6,1.8)  
  (1.8,2.7)  
  (2.7,2.9)  
};
\filldraw[fill opacity=0.15] (1,1) rectangle (2,2);
};
\draw (1.5,3.3) node[scale=0.8]{\(\R^2\setminus]0,1[^2\)};
\begin{scope}[xshift=6cm]
\def \eps {0.1};
\def \axs {-0.3};
\def \instshft {0.5};
    \draw (1.5,\axs-\instshft) node[rotate=-90]{\scriptsize\texttt{W(b)}};
    \draw (\axs-\instshft,1.5) node{\scriptsize\texttt{W(b)}};
    \fill (1.5,\axs) circle (0.03);
    \fill (\axs,1.5) circle (0.03);
\draw[->,>=stealth] (0,\axs) -- (3,\axs) ;
\draw[->,>=stealth] (\axs,0) -- (\axs,3) ;
\draw (0,1.5) -- (3,1.5) ;
\draw (1.5,0) -- (1.5,3) ;
{
\draw (-0.3,-0.3) node {$\times$}; 
\fill[white] (1.5,1.5) circle (0.03); 
\draw[densely dotted]  plot [smooth] coordinates 
{ 
  (0.2,0.1)  
  (0.7,0.5) 
  (1.5,1.5) 
  (2.5,2.1) 
  (2.8,2.6) 
};
};
\draw (1.2,3.3) node[scale=0.8]{\(\set{(x,y\in\R^2)|x=0\Leftrightarrow y=0}\)};
\end{scope}
\end{scope}
\begin{scope}[xshift=3mm,yshift=-25mm]
\draw[->,>=stealth] (0,0.65) node[left=1.75mm]{\scriptsize horizontal} -- (2.5,0.65) ;
\draw[->,>=stealth] (0,0) node[left=1.75mm]{\scriptsize vertical} -- (2.5,0) ; 
\fill (0.5,0) node[above]{\scriptsize\texttt{P(m)}} circle (0.025);
\fill (1,0) node[above]{\scriptsize\texttt{V(m)}} circle (0.025);
\fill (1.5,0.65) node[above]{\scriptsize\texttt{P(m)}} circle (0.025);
\fill (2,0.65) node[above]{\scriptsize\texttt{V(m)}} circle (0.025); 
\end{scope}
\begin{scope}[xshift=60mm,yshift=-25mm]
\draw[->,>=stealth] (0,0.65)  -- (2.5,0.65) ; %node[left]{\scriptsize horizontal}
\draw[->,>=stealth] (0,0)  -- (2.5,0) ;  %node[left]{\scriptsize vertical}
\fill (1.25,0.65) node[above]{\scriptsize\texttt{W(b)}} circle (0.025);
\fill (1.25,0) node[above]{\scriptsize\texttt{W(b)}} circle (0.025);
\end{scope}
\end{tikzpicture}
  \end{center}
\caption{Geometric models of toy programs (\ref{eqn:programs}).}
\label{fig:geometric_models_of_toy_programs}
\end{figure} 

\paragraph{Smooth models and graph blowups.} 
The \emph{length} of a \emph{smooth} path \(\gamma:[a,b]\to\R^n\) (relatively to a norm \(\norm\_\) on \(\R^n\)) is defined as \(\int_a^b\norm{\dot\gamma(t)}dt\) with \(\dot\gamma(t)\) the tangent vector to \(\gamma\) at \(t\in{]}a,b{[}\). 
We say that \(\gamma\) is \emph{directed} when the coordinates of \(\dot\gamma(t)\) in the standard basis of \(\R^n\) are non-negative. 
For every \(p\), \(q\in\R^n\) we have 
\[
\norm{p-q}\quad=\quad
\min\big\{\text{length}(\gamma)\:\big|\:\text{\(\gamma\) path from \(p\) to \(q\)}\big\}
\] 
and \(p\leqslant^nq\) iff there exists a directed path from \(p\) to \(q\). 
The geometric model of a parallel composition of linear processes is an ordered subspace \(X\) of \(\R^n\) for which the following hold \cite[p.~3]{haucourt2024non} (see Fig.~\ref{fig:continuous_vs_smooth}): 
%~ For now let us suppose that the geometric model \(X\) is a subset of \(\real^n\) (we may as well suppose that \(X\) is one of the prototypical models on Fig.~\ref{fig:two_geometric_models}). 
%~ We give some facts about \(X\) that will be proven later in the general setting of geometric models: 
\begin{itemize}
  \item[i)] For any directed path \(\gamma\) on \(X\), there is a directed \emph{smooth} path \(\delta\) on \(X\) that is arbitrarily close to (and with the same endpoints as) \(\gamma\), and whose length is at most that of \(\gamma\).
  \item[ii)] If \(\delta\) is close enough to \(\gamma\), then the execution traces they induce have the same effect on the state of the system. 
  \item[iii)] The length of a directed path (relatively to the \emph{uniform norm}) is the \emph{execution time} of the corresponding execution trace (assuming that concurrent execution is enabled). 
  \item[iv)] A smooth path on \(X\) is directed (\ie order-preserving in all coordinates) iff all its tangent vectors belong to \(\R_{\scalebox{0.8}{+}}^n\). 
\end{itemize}
\begin{figure}
\begin{center}
\begin{tikzpicture}
\draw (-0.5,0) -- (3,0) ;
\draw (0,-0.5) -- (0,3) ;
\foreach \p / \i in { 1 / {P(m)} , 2 / {V(m)}}
{
\draw (-0.1,\p) -- (0,\p) ;;
\draw (\p,-0.1) -- (\p,0) ;;
\draw (0,\p) node[left] {\texttt{\scriptsize{\i}}} ;
\draw (\p,0) node[rotate = 90 , left] {\texttt{\scriptsize{\i}}} ;  
}
\fill[opacity=0.2] (1,1) -- (1,2) -- (2,2) -- (2,1) -- cycle ;
\draw (1,2) -- (1,1) -- (2,1) ;
\draw[line width=0.05mm](0.000000,0.000000)--(0.030135,0.012386)--(0.109626,0.012848)--(0.212191,0.023422)--(0.220210,0.025576)--(0.268193,0.028708)--(0.277378,0.031362)--(0.297594,0.031538)--(0.461039,0.036067)--(0.536088,0.040763)--(0.598447,0.051220)--(0.620152,0.074004)--(0.630255,0.082713)--(0.638390,0.084896)--(0.738085,0.088879)--(0.764932,0.105618)--(0.792586,0.108560)--(0.794948,0.108583)--(0.797810,0.108681)--(0.806071,0.108870)--(0.818626,0.108908)--(0.822861,0.112245)--(0.836023,0.114688)--(0.839345,0.114955)--(0.840282,0.115003)--(0.848171,0.117051)--(0.857117,0.123443)--(0.858711,0.127774)--(0.859420,0.127913)--(0.915530,0.127971)--(0.950352,0.135941)--(0.950545,0.141558)--(0.950704,0.143776)--(0.951686,0.143783)--(0.952940,0.145934)--(0.953971,0.151513)--(0.955158,0.152904)--(0.962904,0.153259)--(0.970509,0.155171)--(0.974285,0.156227)--(0.975719,0.156237)--(0.978887,0.157468)--(0.979557,0.161448)--(0.979672,0.163774)--(0.980069,0.172063)--(0.981422,0.193472)--(0.982553,0.197873)--(0.991368,0.205105)--(0.991472,0.285099)--(0.991598,0.285781)--(0.994049,0.285791)--(0.994253,0.288155)--(0.995134,0.288897)--(0.996040,0.289148)--(0.998630,0.289148)--(0.999194,0.291415)--(0.999687,0.292854)--(0.999742,0.293980)--(0.999783,0.295742)--(0.999783,0.295852)--(0.999792,0.296049)--(0.999796,0.296071)--(0.999864,0.296239)--(0.999964,0.299928)--(1.000000,0.300000)--(1.003005,0.304511)--(1.036255,0.304523)--(1.151010,0.306558)--(1.158995,0.308745)--(1.171141,0.308976)--(1.172365,0.309110)--(1.176999,0.311156)--(1.183236,0.313951)--(1.184908,0.314335)--(1.185072,0.314874)--(1.186003,0.315114)--(1.186601,0.315566)--(1.192443,0.316110)--(1.193324,0.316454)--(1.200385,0.316514)--(1.201074,0.316515)--(1.203913,0.316554)--(1.206730,0.316581)--(1.207340,0.316629)--(1.211142,0.316837)--(1.211357,0.316848)--(1.211386,0.317312)--(1.211646,0.318392)--(1.211675,0.318402)--(1.212991,0.318420)--(1.233342,0.318461)--(1.234077,0.318652)--(1.235169,0.318748)--(1.361219,0.319214)--(1.467230,0.319292)--(1.479105,0.319364)--(1.484484,0.319403)--(1.484657,0.363218)--(1.484734,0.559697)--(1.484747,0.560423)--(1.484760,0.572258)--(1.485632,0.619741)--(1.486154,0.650411)--(1.487149,0.651999)--(1.490323,0.652321)--(1.491580,0.652348)--(1.491755,0.652354)--(1.493767,0.652363)--(1.493805,0.652376)--(1.493887,0.653535)--(1.493949,0.653549)--(1.494049,0.653780)--(1.494582,0.653804)--(1.494702,0.681870)--(1.494718,0.699271)--(1.496093,0.727455)--(1.496661,0.727782)--(1.496666,0.727802)--(1.496689,0.729092)--(1.496690,0.729968)--(1.496691,0.731920)--(1.496714,0.732421)--(1.496717,0.732597)--(1.497026,0.732924)--(1.498198,0.733028)--(1.499360,0.733187)--(1.499495,0.749151)--(1.499654,0.749630)--(1.500000,0.750000)--(1.510009,0.756941)--(1.527029,0.758767)--(1.572075,0.758906)--(1.606044,0.758939)--(1.607875,0.764019)--(1.618561,0.764153)--(1.618639,0.781445)--(1.632353,0.784760)--(1.639565,0.786287)--(1.656014,0.787092)--(1.658287,0.793510)--(1.658385,0.797487)--(1.658538,0.804462)--(1.665669,0.811257)--(1.675265,0.811340)--(1.758002,0.811607)--(1.791437,0.812372)--(1.794330,0.813624)--(1.799294,0.814572)--(1.808340,0.816133)--(1.808684,0.816511)--(1.811085,0.816583)--(1.825084,0.816613)--(1.827800,0.816669)--(1.830873,0.816892)--(1.832107,0.818073)--(1.930541,0.819392)--(1.990296,0.822833)--(1.995620,0.822991)--(2.008161,0.822993)--(2.020115,0.823006)--(2.020589,0.823066)--(2.025621,0.823342)--(2.027554,0.823439)--(2.028447,0.823552)--(2.029115,0.823568)--(2.032920,0.823611)--(2.037920,0.823748)--(2.038524,0.823928)--(2.039671,0.824213)--(2.039890,0.824612)--(2.039989,0.825791)--(2.040538,0.827278)--(2.040586,0.827910)--(2.041075,0.828037)--(2.044232,0.829033)--(2.045433,0.829783)--(2.046152,0.830707)--(2.046597,0.831641)--(2.049600,0.832312)--(2.050904,0.833267)--(2.053985,0.834564)--(2.054178,0.843194)--(2.056239,0.864711)--(2.057381,0.876730)--(2.060680,0.879059)--(2.080632,0.879133)--(2.085446,0.880069)--(2.085690,0.880289)--(2.086230,0.881481)--(2.091111,0.891785)--(2.094940,0.899865)--(2.096177,0.899988)--(2.100000,0.900000)--(2.101373,0.913170)--(2.107516,0.957947)--(2.108176,0.984401)--(2.114330,1.001743)--(2.114354,1.008966)--(2.114365,1.053422)--(2.114431,1.335999)--(2.114493,1.343691)--(2.116632,1.345329)--(2.121883,1.377420)--(2.122569,1.390986)--(2.122622,1.587011)--(2.131761,1.601114)--(2.145137,1.601834)--(2.145493,1.641023)--(2.147227,1.644197)--(2.148014,1.644389)--(2.148754,1.644478)--(2.149056,1.645044)--(2.149301,1.646013)--(2.149346,1.656937)--(2.149354,1.669098)--(2.149358,1.669428)--(2.149359,1.671132)--(2.149675,1.672749)--(2.150037,1.684594)--(2.157742,1.702824)--(2.162284,1.723729)--(2.162336,1.725782)--(2.163828,1.728892)--(2.165141,1.732596)--(2.173739,1.758906)--(2.173802,1.782820)--(2.174042,1.784297)--(2.174782,1.784306)--(2.175047,1.793934)--(2.175826,1.859999)--(2.177367,1.868786)--(2.177688,1.914386)--(2.177755,1.946210)--(2.177934,1.955885)--(2.178172,1.961841)--(2.178600,1.963068)--(2.178670,1.966473)--(2.179192,1.980149)--(2.179319,1.993773)--(2.180140,1.993814)--(2.180441,1.993911)--(2.180715,1.993974)--(2.180820,1.994051)--(2.181333,1.994092)--(2.181575,1.994136)--(2.184831,1.994181)--(2.186893,1.994242)--(2.187139,1.994244)--(2.188219,1.994373)--(2.199722,1.994776)--(2.223174,1.994803)--(2.223474,1.996499)--(2.227484,1.997567)--(2.259475,1.998340)--(2.273441,1.998601)--(2.285131,1.999985)--(2.300000,2.000000)--(2.303182,2.000002)--(2.303793,2.000008)--(2.306635,2.000169)--(2.311758,2.000262)--(2.317087,2.000276)--(2.319445,2.000288)--(2.348404,2.000295)--(2.363191,2.000302)--(2.391446,2.002895)--(2.399815,2.003375)--(2.400311,2.003437)--(2.413003,2.003510)--(2.414348,2.008822)--(2.414357,2.022130)--(2.415524,2.026552)--(2.417862,2.030595)--(2.419508,2.054201)--(2.420222,2.153631)--(2.421265,2.217580)--(2.421905,2.238207)--(2.421910,2.239765)--(2.421930,2.240149)--(2.421951,2.330399)--(2.421996,2.353739)--(2.422018,2.354364)--(2.422042,2.354626)--(2.422091,2.358748)--(2.422159,2.362482)--(2.422165,2.362869)--(2.422165,2.363312)--(2.422186,2.363522)--(2.422192,2.364778)--(2.425003,2.364925)--(2.432755,2.364971)--(2.435160,2.364981)--(2.435686,2.365071)--(2.435888,2.372054)--(2.437739,2.372701)--(2.438116,2.374371)--(2.438496,2.375863)--(2.453577,2.380977)--(2.462185,2.381722)--(2.462344,2.383409)--(2.462708,2.383491)--(2.474588,2.384092)--(2.713626,2.384631)--(2.739767,2.384633)--(2.847009,2.384654)--(2.849169,2.386274)--(2.849327,2.387471)--(2.851378,2.387473)--(2.854483,2.387644)--(2.854829,2.388754)--(2.858156,2.389770)--(2.858490,2.392024)--(2.858957,2.397815)--(2.876009,2.397861)--(2.885009,2.397908)--(2.896541,2.398279)--(2.897212,2.398695)--(2.898493,2.398703)--(2.899779,2.398717)--(2.899787,2.399579)--(2.900000,2.400000)--(2.905037,2.418772)--(2.906239,2.418856)--(2.907359,2.424282)--(2.908510,2.427580)--(2.909246,2.432595)--(2.909314,2.433174)--(2.913175,2.433526)--(2.917962,2.437964)--(2.931365,2.438405)--(2.931941,2.440571)--(2.932174,2.442791)--(2.934161,2.444755)--(2.935035,2.448159)--(2.935469,2.450349)--(2.946057,2.460721)--(2.950165,2.464085)--(2.950191,2.464755)--(2.950263,2.466870)--(2.950264,2.468074)--(2.950276,2.468335)--(2.950291,2.471369)--(2.950403,2.474259)--(2.950446,2.477254)--(2.950449,2.478997)--(2.950462,2.481865)--(2.950500,2.482086)--(2.952369,2.507595)--(2.952855,2.514144)--(2.952904,2.515615)--(2.952922,2.526630)--(2.952926,2.527665)--(2.952936,2.528501)--(2.952956,2.560644)--(2.952980,2.577877)--(2.953244,2.581998)--(2.953283,2.582121)--(2.953290,2.596625)--(2.953302,2.598992)--(2.953339,2.599672)--(2.953699,2.600084)--(2.972188,2.601346)--(2.979716,2.608338)--(2.983813,2.610982)--(2.991666,2.613563)--(2.992049,2.614821)--(2.997049,2.617908)--(2.997360,2.618358)--(2.997870,2.620464)--(2.997972,2.622364)--(2.998020,2.623045)--(2.998101,2.625856)--(2.998119,2.628279)--(2.998155,2.641277)--(2.998313,2.888169)--(2.998324,2.890298)--(2.998333,2.891697)--(2.998372,2.895713)--(2.998649,2.899091)--(2.999261,2.912912)--(2.999401,2.920177)--(2.999638,2.928667)--(2.999919,2.937484)--(2.999948,2.975598)--(3.000000,3.000000);
\fill (1.0,0.3) circle (0.027);
\fill (2.0,0.825) circle (0.027);
\fill (2.115,1.0) circle (0.027);
\fill (2.3,2.0) circle (0.027);
\draw[line width=0.05mm] plot [smooth] coordinates {(0.0,0.0) (1.0,0.3) (1.64,0.5) (2.115,1.0) (2.3,2.0) (2.9,2.4) (3.0,3.0)};
\draw (1.35,0.7) node{\(\gamma\)};
\draw (1.93,0.5) node{\(\delta\)};
\begin{scope}[xshift=60mm,yshift=10mm]
\draw[->,>=stealth] (0,0.65) node[left]{\scriptsize horizontal} -- (2.5,0.65) ;
\draw[->,>=stealth] (0,0) node[left]{\scriptsize vertical} -- (2.5,0) ; 
\fill (1.5,0) node[above]{\scriptsize\texttt{P(m)}} circle (0.025);
\fill (2,0) node[above]{\scriptsize\texttt{V(m)}} circle (0.025);
\fill (0.5,0.65) node[above]{\scriptsize\texttt{P(m)}} circle (0.025);
\fill (1,0.65) node[above]{\scriptsize\texttt{V(m)}} circle (0.025); 
\end{scope}
\end{tikzpicture}
\end{center}
\caption{Continuous vs smooth directed paths.}
\label{fig:continuous_vs_smooth}
\end{figure}

If one of the processes \(P_1,\ldots,P_n\) is not linear, then one of 
the (graphs of the) automata \(\interpret{P_1},\ldots,\interpret{P_n}\) 
has a vertex with at least two outgoing (\resp ingoing) arrows, so the 
locally ordered space 
\(|\interpret{P_1}|_{lo}\times\cdots\times|\interpret{P_1}|_{lo}\) has 
a point whose neighborhoods are not isomorphic with 
\((\R^n,\leqslant^n)\). 
This difficulty can be overcome by means of \emph{blowups}:
\begin{theorem}[{\cite[\S 5]{haucourt2024non}}]\label{thm:blowup_graphs}
    Let $X$ be a product of the locally ordered realizations of $n$ graphs $G_i$, $i\in\{1,\dots,n\}$, i.e. $X=\prod_i|G_i|\cong |\bigotimes_i G_i|$. There is a euclidean local order $\Tilde{X}$, called the blowup of $X$, and a local embedding of locally ordered spaces $\beta_X:\Tilde{X}\xrightarrow{}X$, called the blowup map, satisfying the following universal property: for any euclidean local order $E$ of dimension $n$, for any local embedding $f:E\xrightarrow{}X$, there is a unique continuous map $\Tilde{f}:E\xrightarrow{}\Tilde{X}$ such that \(f=\beta_X\circ\tilde f\).
    %~ making the following diagram commute.
    %~ \begin{center}
        %~ \begin{tikzcd}
        %~ & \Tilde{X}\arrow[d,"\beta_X"]\\
        %~ E \arrow[r,"f"'] \arrow[ur, "\Tilde{f}", dashed] & X
    %~ \end{tikzcd}
    %~ \end{center}
    Moreover, $\Tilde{f}$ is a local embedding.
\end{theorem}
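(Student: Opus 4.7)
I would follow the sheaf/étale bundle route announced in the introduction. On $X$, let $\mathcal F$ be the sheaf whose sections over an open $U\subseteq X$ are the \emph{euclidean charts covering $U$}, meaning local order embeddings $V\hookrightarrow X$ with $V\subseteq\R^n$ open and image containing $U$, taken modulo the natural equivalence of restriction. The stalk $\mathcal F_x$ then enumerates the distinct germs of local euclidean structure through $x\in X$; in the graph-product setting this recovers, at each singular point, exactly the combinatorial data of a choice of pairing between an incoming and an outgoing edge in each factor $G_i$.

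Define $\tilde X := \bigsqcup_{x\in X}\mathcal F_x$ with its étale topology, generated by the graphs of sections of $\mathcal F$. The canonical projection $\beta_X:\tilde X\to X$ is continuous, and each $(x,\sigma)\in\tilde X$ inherits a euclidean chart from any representative of $\sigma$, which I pull back to equip a neighborhood of $(x,\sigma)$ with an open base of ordered sets isomorphic to open subsets of $(\R^n,\leqslant^n)$. Coherence of these charts --- two representatives of the same germ inducing the same local order around $(x,\sigma)$ --- follows from the rigidity of local order embeddings into $\R^n$, and makes $\tilde X$ a bona fide euclidean local order of dimension $n$ for which $\beta_X$ is, tautologically, a local order embedding.

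For the universal property, let $f:E\to X$ be a local embedding from a euclidean local order $E$ of dimension $n$. Each $e\in E$ admits a euclidean chart $\varphi:V\xrightarrow{\sim} W\subseteq\R^n$; composing $\varphi^{-1}$ with $f|_V$ (which is an embedding onto its image by assumption) yields, by definition of $\mathcal F$, a germ $\sigma_e\in\mathcal F_{f(e)}$. Set $\tilde f(e):=(f(e),\sigma_e)$. Locally on $V$, $\tilde f$ agrees with a section of $\beta_X$, so it is continuous, a local embedding of locally ordered spaces, and manifestly satisfies $\beta_X\circ\tilde f=f$. Uniqueness follows because $\beta_X$ is locally injective: any continuous lift of $f$ is pointwise determined by its value at one point in each connected component and, more strongly, by the requirement that it be a continuous section of a local homeomorphism over the image of a chart around $e$.

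\textbf{Main obstacle.} The delicate step is pinning down $\mathcal F$ so that its étale space is a \emph{euclidean} local order rather than merely an étale bundle with euclidean stalks. One has to check that the pulled-back local orders coming from different chart representatives glue coherently on overlaps, and that the combinatorial description of the stalks correctly identifies which points of $X$ are ``euclideanisable'' and along which direction --- in particular, that boundary or unpaired vertex-points of some factor $G_i$ are treated consistently. Once this coherence is secured, both the existence of $\tilde f$ as a local embedding and its uniqueness become formal consequences of the equivalence between sheaves and étale bundles over $X$.
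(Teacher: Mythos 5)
Your overall route---a sheaf of euclidean germs on $X$, its \'etale space as $\tilde X$, and the universal property read off from the germ structure---is the same as the paper's (the construction of \S\ref{sec_blowup} and Theorem~\ref{blowup}, of which the present statement is the special case for graph products). But two steps are genuinely broken. First, your sheaf $\mathcal F$ takes as sections over $U$ the embeddings $V\hookrightarrow X$ whose image \emph{contains} $U$. At a singular point $x$ (e.g.\ the branching point of $|G|_{lo}$ in Fig.~\ref{fig:G}) no euclidean subset of $X$ contains a neighbourhood of $x$, so $\mathcal F(U)=\varnothing$ for every $U\ni x$ and the stalk $\mathcal F_x$ is empty; your $\tilde X$ would then have \emph{no} points over exactly the singularities the blowup is meant to resolve, contradicting your own description of the stalks as pairings of incoming and outgoing edges. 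The inclusion must go the other way ($A\subseteq U$), and then one has to confront what the paper handles explicitly: sections must be allowed to be arbitrary $n$-euclidean local orders, because a gluing of $\R^n$-charts need not be an $\R^n$-chart, so the presheaf of $\R^n$-embeddings is \emph{not} a sheaf (see the remark after Proposition~\ref{traversal}; the sheaf property is proved via the gluing construction of \S\ref{sec_colimit}); and the resulting \'etale space contains ``ghost'' germs $A_x$ with $x\notin A$ (in particular $\varnothing_x$), which must be excised before $\tilde X$ is euclidean. Your definition silently avoids the ghosts only because it is the wrong definition.

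Second, your uniqueness argument rests on the principle that a continuous lift along $\beta_X$ is determined by its value at one point of each connected component. That unique-lifting principle fails for local homeomorphisms with non-Hausdorff total space, and $\tilde X$ is non-Hausdorff by design: two traversals over the same singular point admit no disjoint neighbourhoods (compare the line with doubled origin, over which the projection to $\R$ has two continuous sections agreeing away from $0$). Nor does ``being a continuous section over a chart image'' pin the lift down, since several sections exist over any neighbourhood of a singular value---that multiplicity is the whole point of the blowup. The paper's uniqueness proof is instead pointwise: writing $\tilde f(e)=A_{f(e)}$, continuity of $\tilde f$ into the basic open $V_A$ forces $f(U)\subseteq A$ for a small chart $U\ni e$, and then the key Claim---that $f(U)$ is \emph{open} in $A$, because $V_{f(U)}$ and $V_A$ are both open in $\tilde X$ and $\beta_X$ is injective on $V_A$---yields $A\sim_{f(e)}f(U)$, so the value $\tilde f(e)$ is forced with no appeal to connectedness. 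That step has no analogue in your write-up, and it is where the uniqueness actually lives.
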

Then assertions (i)-(iv) hold with \(\delta=\beta_X\circ\tilde\delta\) 
and \(\tilde\delta\) a well-chosen smooth path on \(\beta_X^{-1}(M)\) 
the inverse image of the geometric model \(M\) under blowup map 
\(\beta_X\). 
An inescapable `drawback' is that \(\tilde X\) is \emph{non-Hausdorff} 
(\(\tilde X\) is an open subspace of \(X\) precisely when all the 
processes \(P_1,\ldots,P_n\) are linear).

\paragraph{Traversals.} In this article, we generalize Thm.~\ref{thm:blowup_graphs} to every locally ordered space \(X\). The key observation that allows us to do so is that the points of the \(n\)-dimensional blowup of \(X\) (\(n\in\N\)) are its \emph{\(n\)-traversals} which, as we will see, correspond to the germs of a sheaf on $X$:

\begin{definition}\label{def:traversal}
Two subsets \(A\) and \(B\) of a locally ordered space \(X\) are said to be \emph{equivalent} at \(p\), which we denote by \(A\sim_pB\) when there exists a neighborhood \(V\) of \(p\) such that \(A\cap V=B\cap V\); the relation \(\sim_p\) is an equivalence. 
The \emph{germ} at \(p\) of \(A\subseteq X\), which we denote by \(A_p\), is the \(\sim_p\)-equivalence class of \(A\).  
The \(n\)-\emph{traversals} at \(p\) (for \(n\in\mathbb N\)) are the germs at \(p\) of the subsets \(A\subseteq X\) such that \(p\in A\), $A$ is a subset of some ordered open set $U$ of $X$, and \(A\cong\R^n\) as ordered spaces. 
 %~ (as locally ordered spaces), the \(\sim_p\)-equivalence class of \(A\) is said to be an . 
\end{definition}

\begin{figure}[h]
\begin{center}
\begin{tikzpicture}
\draw (0:0.2) node {\scalebox{0.75}{\(v\)}} ;  
\draw (-0.75,0.2) node {\scalebox{0.75}{\(a\)}} ;  
\draw (80:0.75) node {\scalebox{0.75}{\(b\)}} ;  
\draw (-80:0.75) node {\scalebox{0.75}{\(c\)}} ;  
\draw (135:0.8) node[yshift=-10mm] {\scalebox{0.75}{\(G\)}} ;  
\filldraw[fill=white] (0,0) circle (0.025) ;
\filldraw[fill=white] (60:1.5) circle (0.025) ;
\filldraw[fill=white] (180:1.5) circle (0.025) ;
\filldraw[fill=white] (300:1.5) circle (0.025) ; 
\draw[->,>=stealth,shorten >=2,shorten <=2] (180:1.5) -- (0,0) ;
\draw[->,>=stealth,shorten >=2,shorten <=2] (0,0) -- (60:1.5) ; 
\draw[->,>=stealth,shorten >=2,shorten <=2] (0,0) -- (300:1.5) ; 
\begin{scope}[xshift=28mm]
\draw (0:0.2) node[xshift=-3mm,yshift=-1.6mm] {\scalebox{0.75}{\(\set v\)}} ;  
\draw (-0.71,0.2) node {\scalebox{0.75}{\(\set{a}\times\coint0{1}\)}} ;  
\draw (80:0.7) node[xshift=5mm,rotate=60] {\scalebox{0.75}{\(\set{b}\times\ocint0{1}\)}} ;  
\draw (-80:0.7) node[xshift=5mm,rotate=-60] {\scalebox{0.75}{\(\set{c}\times\ocint0{1}\)}} ;  
\draw (135:0.8) node[yshift=-10mm] {\scalebox{0.75}{\(|G|\)}} ;  
\draw[arrows = {Arc Barb[width=3pt,arc=120]-}] (180:1.5) -- (0,0) ;
\draw[arrows = {-Arc Barb[width=3pt,arc=120]}] (0,0) -- (60:1.5) ;
\draw[arrows = {-Arc Barb[width=3pt,arc=120]}] (0,0) -- (300:1.5) ;
\end{scope} 
\begin{scope}[xshift=66mm]
\draw (0:0.2) node[xshift=3.6mm,yshift=3mm] {\scalebox{0.75}{\(\set {(a,b)}\)}} ;  
\draw (0:0.2) node[xshift=3.6mm,yshift=-3mm] {\scalebox{0.75}{\(\set {(a,c)}\)}} ;  
\draw (-0.8,0.25) node {\scalebox{0.75}{\(\set{a}\times\opint0{1}\)}} ;  
\draw (80:0.35) node[xshift=6mm,yshift=6mm] {\scalebox{0.75}{\(\set{b}\times\opint0{1}\)}} ;  
\draw (-80:0.35) node[xshift=6mm,yshift=-6mm] {\scalebox{0.75}{\(\set{c}\times\opint0{1}\)}} ;  
\draw (135:0.8) node[xshift=-3mm,yshift=-11mm] {\scalebox{0.75}{\(\text{blowup of \(|G|\)}\)}} ;  
\draw[arrows = {Arc Barb[reversed,width=3pt,arc=120]-Arc Barb[reversed,width=3pt,arc=120]}] (180:1.5) -- (-0.05,0) ;
\draw[arrows = {Arc Barb[reversed,width=3pt,arc=120]-Arc Barb[reversed,width=3pt,arc=120]}] (0.05,0.75) -- (1.5,0.75) ;
\draw[arrows = {Arc Barb[reversed,width=3pt,arc=120]-Arc Barb[reversed,width=3pt,arc=120]}] (0.05,-0.75) -- (1.5,-0.75) ;
\fill (0,0.375) circle (0.025) ;
\fill (0,-0.375) circle (0.025) ;
\end{scope}
\end{tikzpicture}
\end{center}
\caption{The blowup of a graph containing a branching point.}
\label{fig:blow_up_a_branching_point}
\end{figure}

Let \(G\) be a graph. 
The locally ordered space \(|G|_{\scalebox{0.7}{\emph{lo}}}\) is \(1\)-dimensional (\ie the greatest \(n\in\mathbb N\) for which \(|G|_{\scalebox{0.7}{\emph{lo}}}\) has a subspace isomorphic to \(\R^n\) is \(1\)). 
A point \(p\in|G|_{\scalebox{0.7}{\emph{lo}}}\) is \emph{singular} (or \emph{a singularity}) precisely when there are at least two \(1\)-traversals at \(p\). 
Every point of \(|G|_{lo}\) that is not a vertex of \(G\) has a unique \(1\)-traversal. 
The \(1\)-traversals at a vertex \(v\) can be described in purely combinatorial terms by means of arrows to and from \(v\), which is the approach followed by~\cite{haucourt2024non}. 
More precisely, the set of vertices of \(G\) form a discrete subspace of \(|G|_{lo}\), and \(|G|_{lo}\setminus\{\text{vertices of \(G\)}\}\) is an open subset of the blowup of \(|G|_{lo}\) whose complement (in the blowup) is discrete.
%~ The construction leaves the arrows unchanged, and puts over a vertex the number of points corresponding the the number of traversals at this vertex. 
In other words, the blowup of $G$ is identical to \(|G|_{lo}\) over the edges of $G$. On the contrary, the points of $|G|_{lo}$ over a vertex $v$ of $G$ correspond to the pairs $(a,b)$ of edges of $G$ such that $v$ is a target of $a$ and a source of $b$, see Fig.~\ref{fig:blow_up_a_branching_point}.

%~ In fact, it is proved there that in the case of graph products, the blowup of the resulting locally ordered space exists and has the expected universal property:

%
%~ conservative concurrent programs 
%
%~ We saw that the geometric model of a parallel composition \(P_1|\ldots|P_n\) of 
%~ conservative processes is a locally ordered subspace of 
%~ \(|\interpret{P_1}|_{lo}\times\cdots\times|\interpret{P_n}|_{lo}\). 
%
%~ \paragraph{Execution traces.} 
%~ The reason 
%
%
%~ This motivates the use of methods from algebraic topology in concurrency theory~\cite{fajstrup2006algebraic}. 

%~ \paragraph{Graph blowup.}

%~ We can then realize this tensor product as a \emph{locally ordered space}, which is just the product of the realizations of the single graphs. 
%~ \YC{Il faut expliquer en quoi c'est intéressant pour l'étude des programmes concurrents} 
%~ In fact, tensor products of graphs can even be represented by locally ordered non-Hausdorff manifolds, so we can take advantage of the differential machinery to study them~\cite{haucourt2024non}. For example,~\cite{haucourt2024non} defines a pseudo-metric on this manifold, which represents the execution time of \emph{execution traces} of the parallel programs. 
We examine the prototypical case of the graph \(G\) from 
Fig.~\ref{fig:G}. 
The realization \(|G|_{lo}\) is isomorphic to the locally ordered subspace \(\{(x,y)\in[-1,1]^2;\:xy=0\}\) of \((\mathbb R,\leqslant)^2\).
  %~ (Fig.~\ref{fig:G}). 
We observe that \(|G|_{lo}\) is locally isomorphic to \((\mathbb R,\leqslant)\) (as a locally ordered space) at every point except \((-1,0)\), \((1,0)\), \((0,-1)\), \((0,1)\) and the origin. 
The latter is, 
%~ ; these are the singular points of \(|G|_{lo}\). 
as a \emph{branching point}, the only semantically meaningful singularity, the other four can be ignored~\cite[p.~10]{haucourt2024non}. 
\begin{figure}[ht!]
\begin{center}
\begin{tikzpicture}[scale=0.8]
\draw (-1,0) circle (0.025);
\draw (1,0) circle (0.025);
\draw (0,-1) circle (0.025);
\draw (0,1) circle (0.025);
\draw (0,0) circle (0.025);
\draw[->,>=stealth,shorten >=1.5mm,shorten <=2mm,] (0,0) -- (1,0) ;
\draw[->,>=stealth,shorten >=1.5mm,shorten <=2mm,] (-1,0) -- (0,0) ;
\draw[->,>=stealth,shorten >=1.5mm,shorten <=2mm,] (0,0) -- (0,1) ;
\draw[->,>=stealth,shorten >=1.5mm,shorten <=2mm,] (0,-1) -- (0,0) ;
\draw (0,-3/2) node {\(G\)} ;
\begin{scope}[xshift=25mm]
\draw[thick] (-1,0) -- (1,0) ;
\draw[thick] (0,-1) -- (0,1) ;
\draw (0,-3/2) node {\hspace{2.5mm}\(|G|_{\scalebox{0.7}{\emph{lo}}}\cong\{xy=0\}\)} ;
\end{scope}
\begin{scope}[xshift=50mm]%[xshift=-25mm, yshift=-30mm]
\draw[thick] (-1,0) -- (1,0) ;
\draw[densely dotted] (0,-1) -- (0,1) ;
\end{scope}
\begin{scope}[xshift=75mm]%[yshift=-30mm]
\draw[densely dotted] (-1,0) -- (1,0) ;
\draw[thick] (0,-1) -- (0,1) ;
\end{scope}
\begin{scope}[xshift=100mm]%[xshift=25mm, yshift=-30mm]
\draw[densely dotted] (-1,0) -- (1,0) ;
\draw[densely dotted] (0,-1) -- (0,1) ;
\draw[thick] (-1,0) -- (0,0) -- (0,1) ;
\end{scope}
\begin{scope}[xshift=125mm]%[xshift=50mm, yshift=-30mm]
\draw[densely dotted] (-1,0) -- (1,0) ;
\draw[densely dotted] (0,-1) -- (0,1) ;
\draw[thick] (0,-1) -- (0,0) -- (1,0) ;
\end{scope}
\end{tikzpicture}
\end{center}
\caption{The \(4\) traversals at the origin in \(\{xy=0\}\).}\label{fig:G}
\end{figure}
%~ The \(n\)-dimensional \emph{blowup} of a point consists of replacing it by its \(n\)-traversals. 
The \(1\)-dimensional traversals at the origin of \(|G|_{\scalebox{0.7}{\emph{lo}}}\) (there are four of them) are shown on Fig.~\ref{fig:G};  
%~ The origin of \(|G|_{\scalebox{0.7}{\emph{lo}}}\) has four \(1\)-traversal which
they correspond to the paths from the left and down vertices of \(G\) to its up and right ones. So in the blowup of $|G|_{lo}$, we have exactly four points over the origin.

%~ \textcolor{red}{[The graph blowup theorem was here]}

%~ \noindent We call this property the \emph{(unique) lifting property} (one also gets a lifting property for \emph{directed path}, under a combinatorial property of the graphs). This looks like an adjunction involving an embedding of categories, and this will be made precise in the present paper. 

\paragraph{Beyond graphs.} The main purpose of this article is to show that the class of precubical sets that can be represented by (non-Hausdorff) manifolds is much larger than that of tensor products of graphs. 
A practical motivation for such a theoretical development is to model programs during the execution of which the number of processes may vary.  
Let us give an example. 
Denote by \(I\) the graph \((-1)\to 0 \to 1\). 
The vertices (\resp arrows) of \(I\) are \(-1\), \(0\), and \(1\) (\resp \((-1)\hspace{-0.7mm}\to\hspace{-0.7mm}0\) and \(0\hspace{-0.7mm}\to\hspace{-1mm}1\)); their dimension is \(0\) (resp. \(1\)). 
The elements of \(C=I\otimes I\otimes I\) are the \(3\)-tuples \((x,y,z)\) of elements of \(I\) with \(\dim_C(x,y,z)=\dim_I(x)+\dim_I(y)+\dim_I(z)\). 
One of the simplest precubical set that is not a tensor product of graphs is the precubical subset \(K=\{(x,y,z)\in C;\:0\in\{x,y,z\}\}\) of \(C\).   
Its realization \(|K|_{lo}\) is the locally ordered subspace \(\{(x,y,z)\in[-1,1]^3;\:xyz=0\}\) of \((\R^3,\leqslant^3)\) (Fig.~\ref{fig:K}); it is \(2\)-dimensional.
Let \(p=(x,y,z)\in\:|K|_{lo}\), \(A(p)=xy+yz+zx\), and \(B(p)=(1-x^2)(1-y^2)(1-z^2)\). 
Note that \(B(p)=1\) iff \(p\) is the origin, and that \(A(p)=0\) iff \(p\) belongs to an axis.  
The subspace \(\set{\text{\(B=0\)}}\) is called the \emph{boundary} of \(|G|_{lo}\).
%~ We assume that \(B(p)\not=0\) (\ie we ignore the boundary points). 
If \(A(p)B(p)\not=0\) then \(p\) has a neighborhood that is isomorphic to \((\R^2,\leqslant^2)\). % only one of the coordinates is \(0\).
 %~ and none of them belongs to \(\{-1,1\}\). 
If \(A(p)=0\) and \(B(p)\not\in\set{0,1}\), then \(p\) has a neighborhood that is isomorphic to \(\mathbb R\times\{(x,y)\in\mathbb R^2;\:xy=0\}\) (Fig.~\ref{fig:G}). 
 %~ to the realization of the tensor product of the \(G\) and  
%~ whose locally ordered realization is the locally ordered subspace \(|K|_{lo}=\{(x,y)\in[0,1]^2;\:xy=0\}\) of \(\mathbb R^2\). 
The origin is the only point to which the method from~\cite{haucourt2024non} does not apply, whereas our blowup construction can handle it: 

The points of the blowup of \(K\) (a short for `the \(2\)-dimensional blowup of the locally ordered space \(|K|_{lo}\)') over $p$ are the \(2\)-traversals of \(|K|_{lo}\) at $p$; the number of traversals at a given \(p\) is 
\begin{itemize}
\item [0] if \(B(p)=0\),
\item [1] if \(A(p)B(p)\not=0\),
\item [4] if \(A(p)=0\) and \(B(p)\not\in\{0,1\}\), 
\item [9] if \(B(p)=1\) (\ie \(p\) is the origin), see Fig.~\ref{fig:K}. 
\end{itemize}
\vspace{0.2cm}
%~ The points of the blowup being the traversals, a base of its ordered topology is given by the subsets of the form \(\{A_x\,|\,x\in A\}\) for a fixed \(A\cong\R^2\).
An exhaustive description of an \emph{atlas} of the blowup would be fastidious, nevertheless, by way of an example, a \emph{chart} \(\phi\) at the traversal \(\{yz=0\wedge y\geqslant0\wedge z\leqslant 0\}\) of the origin is given by \(\phi(x,0,z)=(x,z)\) and  \(\phi(x,y,0)=(x,y)\) (Fig.~\ref{fig:K}). 
In a similar fashion we obtain charts at the other traversals of the origin; it is easy to check that the \emph{transition maps} between such charts are identities (in particular they are smooth and order preserving).

\begin{figure}[ht!]
\begin{center}
\scalebox{0.7}{\begin{tikzpicture}
% {(x,y,z)\in[0,1]^3;xyz=0}
\draw[thin,->] (-5/3,-4/3) -- (-7/6,-4/3) node{\hspace{4mm}\scalebox{0.7}{\(x\)}} ; 
\draw[thin,->] (-5/3,-4/3) -- (-5/3,-5/6) node[above]{\scalebox{0.7}{\(y\)}} ; 
\draw[thin,->] (-5/3,-4/3) -- (-4/3,-7/6) ;
\draw (-14/12,-13/12) node {\scalebox{0.7}{\(z\)}};
\draw[thick] (0,0) -- (1,0) ;
\draw[thick] (0,0) -- (0,1) ;
\draw[thick] (1,0) -- (1,1) ;
\draw[thick] (0,1) -- (1,1) ;
\draw[thick] (0,-1) -- (1,-1) ;
\draw[thick] (1,-1) -- (1,0) ;
\draw[thick] (1,0) -- (5/3,1/3) ;
\draw[thick] (5/3,1/3) -- (1,1/3) ;
\draw[densely dotted] (1,1/3) -- (2/3,1/3) ;
\draw[densely dotted] (2/3,1/3) -- (0,0) ;
\draw[densely dotted] (2/3,1/3) -- (2/3,1) ;
\draw[thick] (2/3,1) -- (2/3,4/3) ;
\draw[thick] (0,0) -- (-2/3,-1/3) ;
\draw[thick] (-2/3,-1/3) -- (1/3,-1/3) ;
\draw[thick] (1/3,-1/3) -- (1,0) ;
\draw[thick] (0,1) -- (-1,1) ; 
\draw[thick] (-1,1) -- (-1,0) ; 
\draw[thick] (-5/3,-1/3) -- (-2/3,-1/3) ;
\draw[thick] (-5/3,-1/3) -- (-1,0) ;
\draw[thick] (0,1) -- (-2/3,2/3) ;
\draw[thick] (-2/3,2/3) -- (-2/3,-1/3) ;
\draw[thick] (0,1) -- (2/3,4/3) ;
\draw[thick] (-2/3,-1/3) -- (-2/3,-4/3) ;
\draw[thick] (-2/3,-4/3) -- (0,-1) ;
\draw[thick] (0,-1) -- (0,-1/3) ;
\draw[thick] (-1,0) -- (-2/3,0) ;
\draw[densely dotted] (-2/3,0) -- (0,0) ;
\draw[densely dotted] (0,-1/3) -- (0,0) ;
\draw[thick] (-1,-1) -- (-2/3,-1) ;
\draw[thick] (-1,-1) -- (-1,-1/3) ;
\draw[densely dotted] (-1,-1/3) -- (-1,0) ;
\draw[densely dotted] (0,-1) -- (2/3,-2/3) ;
\draw[densely dotted] (2/3,-2/3) -- (2/3,1/3) ;
\draw[densely dotted] (-1,0) -- (-1/3,1/3) ;
\draw[densely dotted] (-1/3,1/3) -- (2/3,1/3) ;
\draw[densely dotted] (-2/3,-1) -- (0,-1) ;
\begin{scope}[xshift=35mm]
\fill[mygray] (-1,-1) rectangle (1,1) ;
\draw[densely dotted] (-1,0) -- (1,0) ;
\draw[densely dotted] (-5/3,-1/3) -- (1/3,-1/3) ;
\draw[densely dotted] (1,1/3) -- (5/3,1/3) ;
\draw[densely dotted] (-5/3,-1/3) -- (-1,0) ;
\draw[densely dotted] (-2/3,-1/3) -- (0,0) ;
\draw[densely dotted] (1/3,-1/3) -- (5/3,1/3) ;
\draw[densely dotted] (0,-1) -- (0,1) ;
\draw[densely dotted] (-2/3,-4/3) -- (-2/3,2/3) ;
\draw[densely dotted] (2/3,1) -- (2/3,4/3) ;
\draw[densely dotted] (-2/3,2/3) -- (2/3,4/3) ;
\draw[densely dotted] (-2/3,-4/3) -- (0,-1) ;
\draw[thick] (-1,-1) rectangle (1,1) ;
\draw (0,-1.6) node {\scalebox{0.75}{\(\{z=0\}\)}} ;
\end{scope}
\begin{scope}[xshift=70mm]
\fill[mygray] (-5/3,-1/3) -- (1/3,-1/3) -- (5/3,1/3) -- (-1/3,1/3) -- cycle ;
\draw[densely dotted] (-1,0) -- (1,0) ;
\draw[densely dotted] (-2/3,-1/3) -- (2/3,1/3) ;
\draw[densely dotted] (0,-1) -- (0,-1/3) ;
\draw[densely dotted] (0,0) -- (0,1) ;
\draw[densely dotted] (1,-1) -- (1,1) ;
\draw[densely dotted] (-1,-1) -- (-1,-1/3) ;
\draw[densely dotted] (-1,0) -- (-1,1) ;
\draw[densely dotted] (-2/3,-4/3) -- (-2/3,2/3) ;
\draw[densely dotted] (2/3,-2/3) -- (2/3,-1/6) ;
\draw[densely dotted] (2/3,1/3) -- (2/3,4/3) ;
\draw[densely dotted] (-1,1) -- (1,1) ;
\draw[densely dotted] (-1,-1) -- (1,-1) ;
\draw[densely dotted] (-2/3,2/3) -- (2/3,4/3) ;
\draw[densely dotted] (-2/3,-4/3) -- (2/3,-2/3) ;
\draw[thick] (-5/3,-1/3) -- (1/3,-1/3) -- (5/3,1/3) -- (-1/3,1/3) -- cycle ;
\draw (0,-1.6) node {\scalebox{0.75}{\(\{y=0\}\)}} ;
\end{scope}
\begin{scope}[xshift=105mm]
\fill[mygray] (-2/3,-4/3) -- (2/3,-2/3) -- (2/3,4/3) -- (-2/3,2/3) -- cycle ;
\draw[densely dotted] (-1,0) -- (-2/3,0) ;
\draw[densely dotted] (0,0) -- (1,0) ;
\draw[densely dotted] (-5/3,-1/3) -- (1/3,-1/3) ;
\draw[densely dotted] (2/3,1/3) -- (5/3,1/3) ;
\draw[densely dotted] (-5/3,-1/3) -- (-2/3,1/6) ;
\draw[densely dotted] (-2/3,-1/3) -- (2/3,1/3) ;
\draw[densely dotted] (1/3,-1/3) -- (5/3,1/3) ;
\draw[densely dotted] (0,-1) -- (0,1) ;
\draw[densely dotted] (1,-1) -- (1,1) ;
\draw[densely dotted] (-1,-1) -- (-1,1) ;
\draw[densely dotted] (-1,1) -- (1,1) ;
\draw[densely dotted] (-1,-1) -- (-2/3,-1) ;
\draw[densely dotted] (0,-1) -- (1,-1) ;
\draw[thick] (-2/3,-4/3) -- (2/3,-2/3) -- (2/3,4/3) -- (-2/3,2/3) -- cycle ;
\draw (0,-1.6) node {\scalebox{0.75}{\(\{x=0\}\)}} ;
\end{scope}
\begin{scope}[xshift=140mm]
\fill[mygray] (-5/3,-1/3) -- (1/3,-1/3) -- (1,0) -- (1,1) -- (-1,1) -- (-1,0) -- cycle ;
\draw[thick] (-1,0) -- (1,0) ;
\draw[thick] (-5/3,-1/3) -- (1/3,-1/3) ;
\draw[densely dotted] (1,1/3) -- (5/3,1/3) ;
\draw[densely dotted] (-2/3,-1/3) -- (0,0) ;
\draw[densely dotted] (1/3,-1/3) -- (5/3,1/3) ;
\draw[densely dotted] (0,-1) -- (0,-1/3) ;
\draw[densely dotted] (0,0) -- (0,1) ;
\draw[densely dotted] (1,-1) -- (1,1) ;
\draw[densely dotted] (-1,-1) -- (-1,-1/3) ;
\draw[densely dotted] (-2/3,-4/3) -- (-2/3,2/3) ;
\draw[densely dotted] (2/3,-2/3) -- (2/3,-1/6) ;
\draw[densely dotted] (2/3,1) -- (2/3,4/3) ;
\draw[thick] (-1,1) -- (1,1) ;
\draw[densely dotted] (-1,-1) -- (1,-1) ;
\draw[densely dotted] (-2/3,2/3) -- (2/3,4/3) ;
\draw[densely dotted] (-2/3,-4/3) -- (2/3,-2/3) ;
\draw[thick] (-5/3,-1/3) -- (-1,0) -- (-1,1);
\draw[thick] (1/3,-1/3) -- (1,0) -- (1,1);
\draw (0,-1.6) node {\scalebox{0.75}{\(\{yz=0\wedge y\geqslant0\wedge z\leqslant0\}\)}} ;
\end{scope}
\begin{scope}[yshift=-35mm]
\fill[mygray] (-1,-1) -- (1,-1) -- (1,0) -- (5/3,1/3) -- (-1/3,1/3) -- (-1,0) -- cycle ;
\draw[thick] (-1,0) -- (1,0) ;
\draw[densely dotted] (-5/3,-1/3) -- (1/3,-1/3) ;
\draw[thick] (-1/3,1/3) -- (5/3,1/3) ;
\draw[densely dotted] (-5/3,-1/3) -- (-1/3,1/3) ;
\draw[densely dotted] (-2/3,-1/3) -- (2/3,1/3) ;
\draw[densely dotted] (1/3,-1/3) -- (5/3,1/3) ;
\draw[densely dotted] (0,-1) -- (0,1) ;
\draw[densely dotted] (1,-1) -- (1,1) ;
\draw[densely dotted] (-1,-1) -- (-1,1) ;
\draw[densely dotted] (-2/3,-4/3) -- (-2/3,2/3) ;
\draw[densely dotted] (2/3,1/3) -- (2/3,4/3) ;
\draw[densely dotted] (-1,1) -- (1,1) ;
\draw[thick] (-1,-1) -- (1,-1) ;
\draw[densely dotted] (-2/3,2/3) -- (2/3,4/3) ;
\draw[densely dotted] (-2/3,-4/3) -- (0,-1) ;
\draw[thick] (1,-1) -- (1,0) -- (5/3,1/3) ;
\draw[thick] (-1,-1) -- (-1,0) -- (-1/3,1/3) ;
\draw (0,-1.6) node {\scalebox{0.75}{\(\{yz=0\wedge y\leqslant0\wedge z\geqslant0\}\)}} ;
\end{scope}
\begin{scope}[xshift=35mm,yshift=-35mm]
\fill[mygray] (-1,-1) -- (0,-1) -- (2/3,-2/3) -- (2/3,4/3) -- (0,1) -- (-1,1) -- cycle ;
\draw[densely dotted] (-1,0) -- (1,0) ;
\draw[densely dotted] (-5/3,-1/3) -- (1/3,-1/3) ;
\draw[densely dotted] (2/3,1/3) -- (5/3,1/3) ;
\draw[densely dotted] (-5/3,-1/3) -- (-1,0) ;
\draw[densely dotted] (-2/3,-1/3) -- (2/3,1/3) ;
\draw[densely dotted] (1/3,-1/3) -- (5/3,1/3) ;
\draw[thick] (0,-1) -- (0,1) ;
\draw[densely dotted] (1,-1) -- (1,1) ;
\draw[thick] (-1,-1) -- (-1,1) ;
\draw[densely dotted] (-2/3,-4/3) -- (-2/3,2/3) ;
\draw[thick] (2/3,-2/3) -- (2/3,4/3) ;
\draw[densely dotted] (-1,1) -- (1,1) ;
\draw[densely dotted] (-1,-1) -- (1,-1) ;
\draw[densely dotted] (-2/3,2/3) -- (2/3,4/3) ;
\draw[densely dotted] (-2/3,-4/3) -- (2/3,-2/3) ;
\draw[thick] (-1,-1) -- (0,-1) -- (2/3,-2/3) ;
\draw[thick] (-1,1) -- (0,1) -- (2/3,4/3) ;
\draw (0,-1.6) node {\scalebox{0.75}{\(\{xz=0\wedge x\leqslant0\wedge z\geqslant0\}\)}} ;
\end{scope}
\begin{scope}[xshift=70mm,yshift=-35mm]
\fill[mygray] (-2/3,-4/3) -- (0,-1) -- (1,-1) -- (1,1) -- (0,1) -- (-2/3,2/3) -- cycle ;
\draw[densely dotted] (-1,0) -- (-2/3,0) ;
\draw[densely dotted] (0,0) -- (1,0) ;
\draw[densely dotted] (-5/3,-1/3) -- (1/3,-1/3) ;
\draw[densely dotted] (1,1/3) -- (5/3,1/3) ;
\draw[densely dotted] (-5/3,-1/3) -- (-2/3,1/6) ;
\draw[densely dotted] (-2/3,-1/3) -- (0,0) ;
\draw[densely dotted] (1/3,-1/3) -- (5/3,1/3) ;
\draw[thick] (0,-1) -- (0,1) ;
\draw[thick] (1,-1) -- (1,1) ;
\draw[densely dotted] (-1,-1) -- (-1,1) ;
\draw[thick] (-2/3,-4/3) -- (-2/3,2/3) ;
\draw[densely dotted] (2/3,1) -- (2/3,4/3) ;
\draw[densely dotted] (-1,1) -- (1,1) ;
\draw[densely dotted] (-1,-1) -- (-2/3,-1) ;
\draw[densely dotted] (-2/3,2/3) -- (2/3,4/3) ;
\draw[densely dotted] (-2/3,-4/3) -- (0,-1) ;
\draw[thick] (-2/3,-4/3) -- (0,-1) -- (1,-1) ;
\draw[thick] (-2/3,2/3) -- (0,1) -- (1,1) ;
\draw (0,-1.6) node {\scalebox{0.75}{\(\{xz=0\wedge x\geqslant0\wedge z\leqslant0\}\)}} ;
\end{scope}
\begin{scope}[xshift=105mm,yshift=-35mm]
\fill[mygray] (-5/3,-1/3) -- (-2/3,-1/3) -- (2/3,1/3) -- (2/3,4/3) -- (-2/3,2/3) -- (-2/3,1/6) -- cycle ;
\draw[densely dotted] (-1,0) -- (-2/3,0) ;
\draw[densely dotted] (0,0) -- (1,0) ;
\draw[densely dotted] (-5/3,-1/3) -- (1/3,-1/3) ;
\draw[densely dotted] (2/3,1/3) -- (5/3,1/3) ;
\draw[thick] (-2/3,-1/3) -- (2/3,1/3) ;
\draw[densely dotted] (1/3,-1/3) -- (5/3,1/3) ;
\draw[densely dotted] (0,-1) -- (0,1) ;
\draw[densely dotted] (1,-1) -- (1,1) ;
\draw[densely dotted] (-1,-1) -- (-1,-1/3) ;
\draw[densely dotted] (-1,0) -- (-1,1) ;
\draw[densely dotted] (-2/3,-4/3) -- (-2/3,2/3) ;
\draw[densely dotted] (2/3,-2/3) -- (2/3,4/3) ;
\draw[densely dotted] (-1,1) -- (1,1) ;
\draw[densely dotted] (-1,-1) -- (1,-1) ;
\draw[thick] (-2/3,2/3) -- (2/3,4/3) ;
\draw[densely dotted] (-2/3,-4/3) -- (2/3,-2/3) ;
\draw[thick] (-2/3,-1/3) -- (-2/3,2/3) ;
\draw[thick] (-2/3,-1/3) -- (-5/3,-1/3) ;
\draw[thick] (-5/3,-1/3) -- (-2/3,1/6) ;
\draw[thick] (2/3,1/3) -- (2/3,4/3) ;
\draw (0,-1.6) node {\scalebox{0.75}{\(\{xy=0\wedge x\leqslant0\wedge y\geqslant0\}\)}} ;
\end{scope}
\begin{scope}[xshift=140mm,yshift=-35mm]
\fill[mygray] (-2/3,-4/3) -- (2/3,-2/3) -- (2/3,-1/6) -- (5/3,1/3) -- (2/3,1/3) -- (-2/3,-1/3) -- cycle ;
\draw[densely dotted] (-1,0) -- (1,0) ;
\draw[densely dotted] (-5/3,-1/3) -- (1/3,-1/3) ;
\draw[densely dotted] (-1/3,1/3) -- (5/3,1/3) ;
\draw[densely dotted] (-5/3,-1/3) -- (-1/3,1/3) ;
\draw[thick] (-2/3,-1/3) -- (2/3,1/3) ;
\draw[thick] (1/3,-1/3) -- (5/3,1/3) ;
\draw[densely dotted] (0,-1) -- (0,-1/3) ;
\draw[densely dotted] (0,0) -- (0,1) ;
\draw[densely dotted] (1,-1) -- (1,1) ;
\draw[densely dotted] (-1,-1) -- (-1,1) ;
\draw[densely dotted] (-2/3,-4/3) -- (-2/3,2/3) ;
\draw[densely dotted] (2/3,1/3) -- (2/3,4/3) ;
\draw[densely dotted] (-1,1) -- (1,1) ;
\draw[densely dotted] (-1,-1) -- (-2/3,-1) ;
\draw[densely dotted] (0,-1) -- (1,-1) ;
\draw[densely dotted] (-2/3,2/3) -- (2/3,4/3) ;
\draw[thick] (-2/3,-4/3) -- (2/3,-2/3) ;
\draw[thick] (-2/3,-4/3) -- (-2/3,-1/3) -- (1/3,-1/3) ;
\draw[thick] (2/3,1/3) -- (5/3,1/3) ;
\draw[thick] (2/3,-2/3) -- (2/3,-1/6) ;
\draw (0,-1.6) node {\scalebox{0.75}{\(\{xy=0\wedge x\geqslant0\wedge y\leqslant0\}\)}} ;
\end{scope}
\end{tikzpicture}}
\end{center}
\caption{The \(9\) traversals at the origin in \(\{(x,y,z)\in[0,1]^3;xyz=0\}\).}\label{fig:K}
\end{figure}

\paragraph{Related work and applications.} 
The optimal schedules for executing a concurrent program correspond to 
the geodesics on its smooth model (provided that the 
latter is equipped with the metric induced by the uniform norm) 
\cite[\S6.1]{haucourt2024non}. 
One can therefore imagine a virtual machine that runs programs according to some kind of \emph{least action principle}, or a compiler that reorders instructions to 
optimize performance, which is the starting point for the EPIC style of 
architectures: «the compiler should play the key role in designing the 
plan of execution, and the architecture should provide the requisite 
support for it to do so successfully» \cite{schlansker00epic}. 

We also mention that, building on the results of the present paper, an elegant way of defining the combinatorial blowup of a precubical set, via \emph{relational} precubical sets, was introduced in~\cite{chamoun2025realization}.

\paragraph{Plan of the paper.} We generalize to \emph{any} locally ordered space the blowup construction used to prove Theorem~\ref{thm:blowup_graphs}. 
The construction is functorial, and provides the right adjoint to the inclusion of the full subcategory of euclidean local orders into that of locally ordered spaces (\S\ref{sec_blowup}).
%we get an adjunction between locally ordered spaces and euclidean local orders, whose left adjoint is  and whose right adjoint is the blowup construction. 
Before this, we recall some well-known facts about precubical sets and locally ordered spaces (\S\ref{sec_prelim}), and generalize the gluing of topological spaces to the locally ordered setting (\S\ref{sec_colimit}). 
Then we focus on locally ordered realizations of precubical sets (which form a large class of locally ordered spaces), for which blowups have a purely combinatorial description, and which can be used in practice; this provides a direct generalization of the approach in~\cite{haucourt2024non}. By passing, we prove a bunch of theorems which are interesting on their own, about locally ordered realizations of precubical sets (\S\ref{sec_pset}). We conclude with a discussion about the path lifting property, which is important since directed paths represent the execution traces of a program in the context of modeling concurrent systems, and we do not want to `forget' these traces (\S\ref{sec_path}).

\section{Preliminaries}\label{sec_prelim}
\subsection{Sheaves and presheaves}
The paper heavily relies on the machinery of \emph{sheaves} (on topological spaces) and \emph{presheaves} (on an arbitrary category) \cite{maclane2012sheaves} (\S II.1 and \S I.1 (vii) respectively).

A \emph{presheaf} on a category $\ccat$ is a functor $P:\ccat^{op}\xrightarrow{}\Set$, i.e.~a collection of sets $P(c)$, for $c$ object of $\ccat$, and a collection of maps $P(f):P(c)\xrightarrow{}P(d)$ for $f:d\xrightarrow{}c$ arrow of $\ccat$, satisfying $P(f\circ g)=P(g)\circ P(f)$ and $P(\id_c)=\id_{P(c)}$ for all morphisms $f,\, g$ and objects $c$ (where $\id_c$ denotes the identity morphism at $c$). 

A \emph{sheaf} on a topological space $X$ is a presheaf on the posetal category $O(X)$ of open sets of $X$ satisfying an amalgamation property with respect to open covers. Namely, a presheaf $F$ on $O(X)$ is a sheaf if and only if for every open set $U$ and every open cover $(U_i)_{i\in I}$ of $U$, the following diagram is an equalizer:
\begin{center}
    \begin{tikzcd}
        F(U)\arrow[r] & \prod_{i\in I}F(U_i)\arrow[r,shift left=0.5ex]\arrow[r,shift right=0.5ex] & \prod_{i,j\in I}F(U_i\cap U_j)
    \end{tikzcd}
\end{center}
where the upper morphism is given by the $\prod_{i\in I}F(U_i)\xrightarrow{}F(U_i)\xrightarrow{}F(U_i\cap U_j)$ and the lower morphism is given by the $\prod_{i\in I}F(U_i)\xrightarrow{}F(U_j)\xrightarrow{}F(U_i\cap U_j)$. The standard example is the sheaf which associates to an open set $U$ the set of continuous functions $U\xrightarrow{}\R$; the above diagram  being an equalizer just says that a continuous function $f$ on $U$ is equivalent to a family of continuous functions $f_i:U_i\xrightarrow{}\R$ for $(U_i)_{i\in I}$ an open cover of $U$ satisfying $(f_i)_{|U_i\cap U_j}=(f_j)_{|U_i\cap U_j}$ for all $i,\, j\in I$. Thus, sheaves can be very useful in situations where we want to study the interplay between local and global phenomena.

\subsection{Graphs and precubical sets}\label{sec:precubical_sets}
Let $\mathcal{G}$ be the category with two objects and two parallel morphisms $0 \rightrightarrows 1$. A \emph{graph} is a presheaf on $\mathcal{G}$. Given a graph $P$, $P(0)$ is the \emph{set of vertices}, $P(1)$ the \emph{set of arrows}, and the functions $P(1) \rightrightarrows P(0)$ are the \emph{source} and the \emph{target}, noted respectively $s$ and $t$.

Precubical sets can be naturally seen as the generalization of graphs in higher dimension, in the following sense. Let $\square$ be the free monoidal category on $\mathcal{G}$ with unit $0$. In particular, the objects are the natural numbers. A \emph{precubical set} can then be defined as a presheaf on $\square$. More explicitly, a precubical set $P$ is a collection of sets $P(n)$ (or $P_n$ for convenience) which are called the \emph{set of $n$-cubes} of $P$ (for $n$ a natural number), and a collection of maps $P(n)\xrightarrow{}P(k)$, $k\leq n$, which are called the \emph{$k$-face maps}. More precisely, we note $\delta_{n,i}^\epsilon:P(n)\xrightarrow{}P(n-1)$, with $\epsilon\in\{-,+\}$ (or $\{0,1\}$), $0\leq i<n$ the $(n-1)$-face maps corresponding to every dimension, whose interpretation is the following: $\delta_{n,i}^\epsilon$ associates to an $n$-cube its back ($\epsilon=-$) or front ($\epsilon=+$) face in direction $i$. Any $k$-face is obtained by successively applying maps of the form $\delta_{k,i}^\epsilon$, and these are subject to the cocubical relations 
\[\delta^\epsilon_{k,j}\delta^\eta_{k+1,i}=\delta^\eta_{k,i}\delta^\epsilon_{k+1,j+1}\]
for $0\leq i\leq j<n$. This completely describes a precubical set $P$. For example, the precubical set freely generated by a single $2$-cube $c$ is given by:
\begin{center}
    \begin{tikzpicture}
    \fill[mygray] (0,0) rectangle (2,2);
      \filldraw (0,0) circle (0.05);
      \filldraw (2,0) circle (0.05);
      \filldraw (0,2) circle (0.05);
      \filldraw (2,2) circle (0.05);
      \draw
      (0,0) edge[thick,"$\delta_{2,1}^-(c)$"'] (2,0)
      (2,0) edge[thick,"$\delta_{2,0}^+(c)$"'] (2,2)
      (0,0) edge[thick,"$\delta_{2,0}^-(c)$"] (0,2)
      (0,2) edge[thick,"$\delta_{2,1}^+(c)$"] (2,2)
      ;
      \draw (1,1) node{$c$};
      \draw (0,0) node[below left]{\tiny $\delta_{1,0}^-\circ\delta_{2,0}^-(c)$};
      \draw (0,2) node[above left]{\tiny $\delta_{1,0}^+\circ\delta_{2,0}^-(c)$};
      \draw (2,0) node[below right]{\tiny $\delta_{1,0}^+\circ\delta_{2,1}^-(c)$};
      \draw (2,2) node[above right]{\tiny $\delta_{1,0}^+\circ\delta_{2,1}^+(c)$};
\end{tikzpicture}
\end{center}

\noindent A face of a cube obtained by successively applying morphisms of the form $\delta^+_{k,i}$ (respectively $\delta^-_{k,i})$ is called an \emph{upper} (respectively \emph{lower}) face. A vertex which is an upper (respectively lower) face is called the \emph{maximum} (respectively the \emph{minimum}) of the cube. The \emph{dimension} of a precubical set $P$ is the maximal $n$ such that $P_n\ne 0$ if it exists, and $\aleph_0$ if it does not exist. A precubical set $P$ is called \emph{$n$-homogeneous} if the set of $k$-cubes of $P$ for $k\in\N$ is exactly the set of $k$-faces of $n$-cubes of $P$. Two cubes in a precubical set $P$ are \emph{adjacent} if one is a face of the other. The \emph{tensor product} of two precubical sets $P$ and $Q$ is defined by $(P\otimes Q)_n=\bigcup_{i+j=n}P_i\times Q_j$ with the expected face maps. See for example~\cite[\S 3.4.1]{fajstrup2016directed}. 

Note that the category of precubical sets can be seen as a subcategory of the category of \emph{symmetric} precubical sets, which is the category of presheaves over the free \emph{symmetric} monoidal category on $\mathcal{G}$ with unit $0$, and which we note $\square_\mathsf{sym}$ (see~\cite[\S 6]{grandis2003cubical} for details). More concretely, a symmetric precubical set $P$ is a precubical set for which every $P(n)$ is equipped with an action of the symmetric group $\mathfrak{S}_n$ (compatible with the face maps in an obvious way). Every precubical set can be seen as a symmetric precubical set, by freely adding the images of these actions (this is the left adjoint to the precomposition by the inclusion $\square\xrightarrow{}\square_\mathsf{sym}$). In what follows, by isomorphic precubical sets (noted $\cong$), we mean precubical sets which are isomorphic as symmetric precubical sets. This just means that we allow ourselves to permute the dimensions of a cube: intuitively, an $n$-cube of a symmetric precubical set $P$ is represented by the \emph{orbit} of some element of $P(n)$. This will be crucial, since we will be looking for combinatorial analogs of embeddings of ordered spaces. For example, the map $(x,y)\mapsto(y,x)$ is indeed an automorphism of $\R^2$ equipped with the product topology and order.

\subsection{Locally ordered spaces}\label{sec:locally_ordered_spaces}
An \emph{ordered space} is a topological space equipped with an order relation. A morphism of ordered spaces is a continuous order-preserving map. An isomorphism of ordered spaces is called a \emph{dihomeomorphism}. However, this notion is too strict in general. For example, we would like to define a directed path in $X$ to be a morphism $[0,1]\xrightarrow{}X$, where $[0,1]$ has the natural topology and order, but then no directed \emph{loop} exists in $X$, which is a problem as soon as we try to define a satisfying directed version of the circle. This motivates the following definition: a \emph{locally ordered space}, or \emph{local order}, is a topological space $X$ which has an open base of ordered sets. More formally, let $X$ be a topological space. An \emph{ordered base} $\mathcal{B}$ of $X$ is a collection of (partially) ordered sets satisfying the following conditions: 
\begin{itemize}
    %~ \item the underlying set of every $B\in\mathcal{B}$ is open in $X$,
    \item the underlying sets of the elements of $\mathcal{B}$ form a base for the topology of $X$,
    \item for every $B,\,B'\in\mathcal{B}$ and $x\in B\cap B'$, there is $B''\in\mathcal{B}$ such that $x\in B''\subseteq B\cap B'$ and $B''$ is a subposet of $B$ and $B'$.
\end{itemize}
\noindent A locally ordered space is defined to be a topological space equipped with an ordered base $\mathcal{B}$. The elements of $\mathcal{B}$ are then called \emph{ordered open sets}. We can now define the directed circle to be the local order whose underlying topological space is the circle $\mathbb{S}^1$, and whose ordered open sets are the proper open sets of $\mathbb{S}^1$ with the order induced by, say, the anti-clockwise direction. These are clearly order-compatible. See the picture below, where we have represented two elements of the ordered base in grey, with the order induced by the arrow.
\begin{center}
    \begin{tikzpicture}
        \draw (0,0) circle (1cm);
        \draw[decoration={markings,mark=at position 0.5 with \arrow{stealth}}, postaction={decorate}] (0,0) ++(130:1cm) arc (130:330:1cm);
        \draw[line width=1.5mm,opacity=0.3,line cap=round] (0,0) ++(30:1cm) arc (30:130:1cm);
        \draw [line width=1.5mm,opacity=0.3,line cap=round] (0,0) ++(80:1cm) arc (80:180:1cm);
    \end{tikzpicture}
\end{center}

A morphism of local orders (respectively a \emph{local embedding}) $f:X\xrightarrow{}Y$ is a continuous map which is locally order preserving (respectively locally an \emph{embedding}), i.e.~for every $x\in X$ and ordered neighborhood $V$ of $f(x)$, there exists an ordered neighborhood $U$ of $x$ such that $f(U)\subseteq V$ and $f_{|U}$ is order preserving (respectively order preserving and admitting a continuous order preserving inverse on its image). A locally ordered space is \emph{euclidean} (of dimension $n$) if every point has an ordered neighborhood which is dihomeomorphic to $\R^n$ (with the product order). Note that euclidean local orders can be thought of as 'locally ordered (topological) manifolds'. A \emph{directed path} is a morphism of locally ordered spaces whose domain is euclidean of dimension 1 and connected. We note $\mathbb{L}$ the category of local orders and locally ordered maps, $\mathbb{L}_e$ the category of locally ordered spaces and local embeddings, and $\mathcal{E}_n$ the category of euclidean local orders of dimension $n$ and local embeddings. Equivalent definitions of locally ordered spaces are listed in~\cite{coursolle2024non}.

\subsection{Locally ordered realization of precubical sets}\label{sec:lorps}
In algebraic topology, we are used to having a realization functor from combinatorial to continuous models. For example, every precubical set $P$ admits a geometric realization $|P|$, which is simply given by left Kan extension. An explicit description can be found in~\cite[Definition 6.5]{fajstrup2006algebraic}; we mention that the underlying set of $|P|$ is $\bigcup_{n\in\N}P_n\times]0,1[^n$, and that $|P|$ is sequential since it is a quotient of sequential spaces. For $c\in P_n$, $|c|$ will mean the realization of the precubical subset of $P$ \emph{spanned} by $c$, i.e.~$c$ and all its faces in all dimensions. Note that $|c|$ is closed in $|P|$. Also note that $\{c\}\times ]0,1[^n$ is open and included in $\mathring{|c|}$ the interior of $|c|$, and they are in fact equal when all the faces of $c$ are distinct. For an open subset $U$ of $|P|$ and a cube $c$, we say that $c$ is \emph{a cube of $U$}, or $U$ \emph{intersects} $c$, if $U\cap \{c\}\times]0,1[^{\dim(c)}\ne\varnothing.$

In the directed case, the situation is more complicated because the category of locally ordered spaces, unlike the category of topological spaces, is not cocomplete~\cite{coursolle2024non}. 
Given a precubical set $P$, we call \emph{locally ordered realization} of \(P\) any locally ordered space $|P|_{lo}$ (with ordered base $\mathcal{B}$) satisfying the following conditions:
\begin{enumerate}
    \item [(i)] The underlying topological space of $|P|_{lo}$ is the geometric realization of $P$ (in particular the underlying set is the same).
    \item [(ii)] For every $B\in\mathcal{B}$, the preorder given by $x\leq y$ if and only if there is a path from $x$ to $y$ that is increasing on each cube
  that it traverses (which we call cubewise increasing) is a partial order and coincides with the order on $B$.
    \item [(iii)] For every $B\in\mathcal{B}$, $B$ is \emph{order convex} in the sense that for every $x,y\in B$, if $x\in\{c\}\times]0,1[^{\dim(c)}$ and $y\in\{c'\}\times]0,1[^{\dim(c')}$ with $c$ a lower (respectively upper) face of $c'$ and such that the segment $]x,y[$ (respectively $]y,x[$) is directed in $\{c'\}\times]0,1[^{\dim(c')}$, then $x\leq y$ (respectively $y\leq x$) in $B$.
    %\item For all $B,B'\in\mathcal{B}$, if $B\subseteq B'$ then the order on $B'$ is the order induced by $B$.
\end{enumerate}

\noindent Note that there is at most one such realization up to isomorphism, so we can soundly talk about \emph{the} locally ordered realization of a precubical set. Such a realization is known to exist for the (large) class of \emph{non-self-linked} precubical sets~\cite[\S6.5]{fajstrup2006algebraic}; it actually exists in even bigger generality, as will be shown in a forthcoming paper of the second author. We will also suppose that the ordered base $\mathcal{B}$ satisfies the following property, which will be of technical use (see Lemma~\ref{cond4} and the discussion after it):

\begin{enumerate}
\setcounter{enumi}{3}
    \item [(iv)] For every vertex $v$ of $P$, an ordered neighborhood $B$ of $v$ only intersects cubes adjacent to $v$, and for every cube $c$ with minimum (respectively maximum) $v$, for every point $x\in\{c\}\times]0,1[^{\dim(c)}$, every point $y\geq x$ (respectively $y\leq x$) in $B$ belongs to $\{c'\}\times]0,1[^{\dim(c')}$ for some cube $c'$ having $c$ as lower (respectively upper) face.
\end{enumerate}

In this paper, we implicitely only consider precubical sets which have a locally ordered realization.

\section{Colimits of local orders}\label{sec_colimit}
\begin{definition}
    Let $X$ be a locally ordered space with ordered base $\mathcal{B}$, and $U\subseteq X$ an open subset of $X$. Then $U$ can naturally be seen as a locally ordered space via the ordered base $\{B\cap U\,|\,B\in\mathcal{B}\}$. We say that $U$ is equipped with the \emph{induced local order}. 
\end{definition}

\noindent As mentioned before, the category $\mathbb{L}$ is not cocomplete. However, some useful colimits do exist. Let $(X_i)_{i\in I}$ be a family of local orders. Suppose that there exists a family $(X_{ij},\phi_{ij})_{i,j\in I}$ such that the underlying set of $X_{ij}$ is an open subset of $X_i$ and $\phi_{ij}$ is an isomorphism of locally ordered spaces $X_{ij}\cong X_{ji}$ (where the local orders are the induced ones), subject to the following conditions:
\begin{enumerate}
    \item $\phi_{ij}^{-1}=\phi_{ji}$
    \item $\phi_{ij}(X_{ij}\cap X_{ik})=X_{ji}\cap X_{jk}$
    \item $\phi_{jk}\circ\phi_{ij}=\phi_{ik}$ on $X_{ij} \cap X_{ik}$
\end{enumerate}
Then we can consider the diagram \(D\) of local orders consisting of the $(X_i)_i$ and the $(X_{ij})_{ij}$ together with the inclusions $\iota_{ij}:X_{ij}\hookrightarrow{}X_i$ and the isomorphisms $(\phi_{ij})_{ij}$. 
%~ We want to show that this diagram admits a colimit. 

\begin{proposition}
    The diagram \(D\) admits a colimit in $\mathbb{L}$.
\end{proposition}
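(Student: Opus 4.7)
The plan is to build the colimit by first constructing the underlying topological gluing and then promoting it to a local order by transporting the ordered bases of the $X_i$ along the canonical maps, finally checking the universal property directly.

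First, I would form the topological quotient $X = \bigsqcup_{i\in I} X_i / \sim$, where $\sim$ is the relation $x \sim \phi_{ij}(x)$ for $x \in X_{ij}$. Conditions (1), (2), (3) on the $\phi_{ij}$ are exactly the cocycle conditions that guarantee $\sim$ is already an equivalence relation (in particular, setting $i=j=k$ in (3) and using (1) gives $\phi_{ii} = \id_{X_i}$, so each canonical map $q_i : X_i \to X$ is well-defined on all of $X_i$). A standard verification then shows that each $q_i$ is an open embedding: injectivity follows from $\sim$ being trivial on a single summand, and openness from the fact that $q_j^{-1}(q_i(U)) = \phi_{ij}(U \cap X_{ij})$ is open in $X_j$ for every open $U \subseteq X_i$.

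Next, I would equip $X$ with the ordered base
\[
\mathcal{B} = \{\, q_i(B) \;:\; i \in I,\ B \in \mathcal{B}_i\,\},
\]
where each $q_i(B)$ carries the order transported from $B$ along the bijection $q_i|_B$. The crucial point to verify is \emph{consistency}: if $q_i(B_i)$ and $q_j(B_j)$ share a point, the two candidate orders must agree on the intersection. This is exactly where the hypothesis that each $\phi_{ij}$ is an isomorphism of the \emph{induced} locally ordered spaces enters. Indeed, $q_i^{-1}(q_j(B_j)) = \phi_{ij}^{-1}(B_j \cap X_{ji})$ as a subset of $X_{ij}$, and the order induced on this set from $B_i$ coincides with the order pulled back from $B_j$ through $\phi_{ij}$. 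Once consistency is established, the two ordered-base axioms reduce to the corresponding axioms inside some $X_i$: given $q_i(B_i)$, $q_j(B_j)$ containing $x = q_i(y) = q_j(\phi_{ij}(y))$, I pull everything back to $X_{ij} \subseteq X_i$ and apply the ordered-base refinement property inside $X_i$ to the two order-compatible open sets $B_i \cap X_{ij}$ and $\phi_{ij}^{-1}(B_j \cap X_{ji})$, then push the resulting refinement forward via $q_i$.

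With this structure, the maps $q_i : X_i \to X$ are morphisms of local orders essentially by construction, and they form a cocone on the diagram $D$ since $q_i|_{X_{ij}} = q_j \circ \phi_{ij}$ by the definition of $\sim$. For the universal property, given any cocone $(f_i : X_i \to Y)_{i\in I}$ in $\mathbb{L}$, the topological colimit yields a unique continuous $f : X \to Y$ with $f \circ q_i = f_i$; to see that $f$ is a morphism of local orders, I fix $x \in X$ and an ordered neighborhood $W$ of $f(x)$ in $Y$, choose $i$ and $y \in X_i$ with $q_i(y) = x$, use that $f_i = f \circ q_i$ is locally order-preserving at $y$ to get an ordered neighborhood $U \subseteq X_i$ of $y$ with $f_i(U) \subseteq W$ and $f_i|_U$ order-preserving, and observe that $q_i(U) \in \mathcal{B}$ is an ordered neighborhood of $x$ on which $f$ is order-preserving.

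The main obstacle is the consistency check in Step~2, i.e.\ showing that the transported orders on different $q_i(B_i)$ genuinely assemble into a single ordered base on $X$; everything else is bookkeeping once this has been established. All the remaining conditions dissolve into statements about a single $X_i$ because $q_i$ is an open embedding and $\mathcal{B}_i$ is already an ordered base there.
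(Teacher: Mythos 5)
Your proposal is correct and follows essentially the same route as the paper: form the topological colimit first, transport the ordered bases of the $X_i$ along the (open, injective) structure maps, use the hypothesis that the $\phi_{ij}$ are isomorphisms of the induced local orders to check that the transported bases agree and refine one another, and then deduce the universal property from the topological one. The only cosmetic difference is that you verify the open-embedding property of the structure maps by hand where the paper cites a reference.
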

\begin{proof}
    First we form its colimit in the category of topological spaces. Note $X$ this colimit. Using~\cite[(3.2.13)]{ducros2014introduction}, we know that the structure maps $X_i\xrightarrow{}X$ are \emph{open immersions}, in the sense that they induce an isomorphism (of topological spaces) between $X_i$ and an open subset of $X$, and $X$ is the union of (the image of) all the $X_i$, $i\in I$, the intersection of $X_i$ and $X_j$ in $X$ being (the image of) $X_{ij}$ (or equivalently $X_{ji}$). Now we have to exhibit an ordered base on $X$. For every $i\in I$, there is an ordered base $(\bindexp ki)_{k\in K(i)}$ of $X_i$. Consider the collection of (the image of) all these families in $X$. We just need to show that if $x\in \bindexp ki \cap \bindexp{k'}j$ then there is some $\bindexp{k''}l$ containing $x$, included in $\bindexp ki \cap \bindexp{k'}j$ and such that the corresponding orders coincide. This just follows from the fact that $X_{ij}$ and $X_{ji}$ are isomorphic as locally ordered spaces under $X$, and that $(\bindexp ki)_{k\in K(i)}$  (resp. $(\bindexp kj)_{k\in K(j)}$) is an ordered base of $X_i$ (resp. $X_j$). Now clearly this is the `maximal' local order structure on $X$ such that the structure maps are local order morphisms, and in fact we get that they are local order isomorphisms on their image. The universal property of the colimit follows from this and from the universal property of the colimit in the category of topological spaces, which concludes the construction.
\end{proof}

\begin{definition}
    We call the above construction the \emph{gluing} of the family $(X_i)$ along $(X_{ij})_{ij}$.
\end{definition}

\section{Blowing up via sheaves}\label{sec_blowup}
As suggested in the introduction, the blowup of a local order $X$ is a bundle whose fibers over $x\in X$ are the equivalence classes of ways of embedding $\R^n$ into $X$ by sending $0$ to $x$. In fact, there is a construction in sheaf theory which essentially does exactly this. This will lead to a vast generalization of the construction of Theorem~\ref{thm:blowup_graphs}, to all local orders, and we will still have the same nice universal property. 

\subsection{The blowup}
Recall that for a sheaf $F$ on $X$ and a point $x\in X$, the \emph{stalk} of $F$ at $x$ is given by $F_x:=\colim_{V\ni x} F(V)$. Given an open set $U\ni x$, the \emph{germ} $A_x$ of $A\in F(U)$ at $x$ is the equivalence class of $A$ in $F_x$. The corresponding equivalence relation is noted $\sim_x$.

\begin{definition}
    Let $X$ be a local order. We define the \emph{sheaf of $n$-traversals} $\oindexp{X}{n}$ by the setting:
    \[\oindexp{X}{n}(U):=\{A\subseteq U\,|\,A\cong E\text{ with $E$ some $n$-euclidean local order}\}\]
    for an open set $U\subseteq X$, the restriction map $\oindexp{X}{n}(V)\xrightarrow{}\oindexp{X}{n}(U)$ for $U\subseteq V$ being given by intersection with $U$.
\end{definition}

\begin{remark}
\begin{enumerate}
        \item The restriction maps are well-defined because an open subset of a euclidean local order is euclidean.
        \item This is \textit{a priori} only a presheaf (but we will prove that it is a sheaf), and the sections are not $n$-traversals (but we will prove that the germs essentially are).
        \item The empty set $\varnothing$ is trivially a euclidean local order, so we always have an element $\varnothing_x\in\oindexp{X,x}{n}$. This is in fact crucial to make $\oindexp{X}{n}$ into a presheaf.
    \end{enumerate}
\end{remark}

\noindent We can think of $\oindexp{X}{n}(U)$ as the set of equivalence classes of euclidean embeddings in $U$ with respect to the relation 'having the same image'. So for a euclidean embedding $f$, we may write $[f:E\xrightarrow{}U]$ to mean $\Im(f)$. Here embedding just means that $f$ is an isomorphism of locally ordered spaces on its image.

\begin{proposition}
    For every local order $X$, $\oindexp{X}{n}$ is a sheaf.
\end{proposition}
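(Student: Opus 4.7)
The plan is to verify the sheaf axiom directly. Given an open cover $(U_i)_{i\in I}$ of $U$ and a compatible family of sections $A_i \in \oindexp{X}{n}(U_i)$, meaning $A_i \cap U_j = A_j \cap U_i$ for all $i,j$, I claim the unique amalgamation is $A := \bigcup_{i\in I} A_i$. Thinking of each $A_i$ as a subset of $X$ carrying the subspace local order (whose ordered base is $\{B \cap A_i : B \in \mathcal{B}\}$, and which coincides with the induced local order it inherits from the open set $U_i$), my task reduces to showing two things: that $A \cap U_i = A_i$, and that $A$, with its subspace local order from $X$, is itself $n$-euclidean.

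The identity $A \cap U_i = A_i$ will follow from a short set-theoretic computation: using the compatibility condition and the fact that $(U_j)_j$ covers $U \supseteq A_i$, one computes $A \cap U_i = \bigcup_j (A_j \cap U_i) = \bigcup_j (A_i \cap U_j) = A_i \cap U = A_i$. This both pins down uniqueness (any $A'$ with $A' \cap U_i = A_i$ for all $i$ is covered by the $A_i$, hence equals $A$) and tells us that each $A_i$ is open in $A$ for the subspace topology, since $U_i$ is open in $X$.

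The substantive step is verifying that $A$ is $n$-euclidean. Pick $x \in A$ and an index $i$ with $x \in A_i$. Because $A_i$ is already an $n$-euclidean local order, there is an ordered neighborhood $V$ of $x$ in $A_i$ dihomeomorphic to $\R^n$. Since $A_i$ is open in $A$, the set $V$ is also open in $A$, and the subspace ordered base at $V$ viewed through $A$ coincides with the one viewed through $A_i$: both are of the form $\{B \cap V : B \in \mathcal{B}\}$. So $V$ is an ordered neighborhood of $x$ in $A$ isomorphic to $\R^n$, giving the desired euclidean structure on $A$.

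I do not expect a genuine obstacle here; the only subtlety worth flagging is that $A$ is not assumed open in $X$, so one must consistently work with the subspace local order rather than an ``induced'' one in the narrow sense of the excerpt's Definition. Once this is fixed and one verifies (routinely) that the subspace ordered base really satisfies the two ordered-base axioms, the argument goes through formally, and functoriality of the restriction maps $A \mapsto A \cap U$ ensures the amalgamated $A$ lands in $\oindexp{X}{n}(U)$ with the correct restrictions.
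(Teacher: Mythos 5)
Your proof is correct and arrives at the same amalgamation $A=\bigcup_i A_i$ via the same key computation $A\cap U_i=\bigcup_j(A_i\cap U_j)=A_i$, but the way you certify that $A$ is a legitimate section differs from the paper's. The paper picks representatives $[f_i:E_i\to U_i]$ of the $A_i$, forms the abstract gluing $E$ of the euclidean local orders $E_i$ along the open subsets $E_{ij}=f_i^{-1}(A_i\cap A_j)$ using the colimit construction of \S\ref{sec_colimit}, and then argues that the induced map $E\to A$ is an isomorphism; you instead equip $A$ itself with the subspace local order and verify directly that it is euclidean, using that each $A_i$ is open in $A$ and that euclideanness is a local property. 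Your route is shorter and bypasses the gluing machinery entirely, at the cost of having to check that the subspace local order on a non-open subset of $X$ is well defined (the paper only introduces the induced local order for open subsets) and that membership in $\oindexp{X}{n}(U)$ is equivalent to $A$ being euclidean for that structure --- points you correctly flag as routine and which do go through. The paper's detour through abstract euclidean local orders stays entirely within structures it has already set up, and its parenthetical remark that ``$A$ is the gluing of the family $A_i$'' shows it implicitly relies on the same picture you make explicit. Both arguments ultimately rest on the same two facts: each $A_i$ is open in $A$, and being locally dihomeomorphic to $\R^n$ can be checked on an open cover.
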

\begin{proof}
    We have to prove that given a family of open sets $(U_i)_{i\in I}$ and a family of subsets $(A_i)_{i\in I}$ such that $A_i\in \oindexp{X}{n}(U_i)$ and $A_i\cap U_j=A_j\cap U_i$ for all $i,j\in I$, there is a unique $A\in \oindexp{X}{n}(U)$ such that $A\cap U_i=A_i$ for all $i$. So consider such family. Uniqueness is obvious since we have local uniqueness on an open cover. So we just need to prove existence. Clearly if $A$ exists then $A=\bigcup_i A_i$, so set $A$ like this. Choose a representative $[f_i:E_i\xrightarrow{}U_i]=A_i$ for all $i$. We want to use \S\ref{sec_colimit} to glue the $E_i$. For every $i,j$, define $E_{ij}:=f_i^{-1}(A_i\cap A_j)$. Notice that since $\Im(f_i)=A_i$, we have
    \[f_i^{-1}(U_j)=f_i^{-1}(U_j\cap A_i)=f_i^{-1}(A_j\cap U_i)=f_i^{-1}(A_j\cap U_i\cap A_i)=E_{ij}\]
    so $E_{ij}$ is an open subset of $E_i$. Let $\phi_{ij}:=f_j^{-1}\circ f_i:E_{ij}\cong E_{ji}$. Taking $X_i:=E_i$ and $X_{ij}:=E_{ij}$, all the conditions of \S\ref{sec_colimit} are trivially verified, and we note $E$ the colimit representing the gluing of the $E_i$ along the $E_{ij}$. By the universal property of the colimit, the family $(f_i)_{i\in I}$ defines a map $f:E\xrightarrow{}A$. $E$ is euclidean and $f$ is an isomorphism whose inverse is given by the morphism induced by the family $(f_i^{-1})_{i\in I}$ (note that $A$ is the gluing of the family $A_i$), because these properties can be checked locally. This finishes the proof.
\end{proof}

\begin{proposition}\label{traversal}
    For a local order $X$ and $A\in \oindexp{X}{n}(U)$ for some $U$, if $x\in X$ is such that $x\in A$, then $A_x$ is an $n$-traversal. Conversely, every $n$-traversal is of this form.
\end{proposition}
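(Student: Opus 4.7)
The plan is to prove the two directions separately; the converse is almost tautological, while the forward direction requires some care in matching local and global orders.

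\textbf{Converse.} If $A_x$ is an $n$-traversal, then by Definition~\ref{def:traversal} we have $A \subseteq U$ for some ordered open $U$ of $X$ and $A \cong \R^n$ as ordered spaces. Since $\R^n$ with its product order is itself an $n$-euclidean local order, the subset $A$ lies directly in $\oindexp{X}{n}(U)$, and its germ at $x$ coincides with the prescribed $A_x$.

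\textbf{Forward.} Let $A \in \oindexp{X}{n}(U)$ with $x \in A$, witnessed by an isomorphism of locally ordered spaces $g : E \to A$ for some $n$-euclidean local order $E$, and set $e = g^{-1}(x)$. By the euclidean hypothesis, $e$ admits an ordered open neighborhood $W_0 \subseteq E$ with $W_0 \cong \R^n$ as ordered spaces. Using the continuity of $g$ and the fact that the ordered opens of $X$ form a base around $x$, I would shrink $W_0$ to an ordered open $W$ containing $e$ such that $g(W)$ is contained in some ordered open $V$ of $X$ around $x$; choosing $W$ to be an open box inside $W_0 \cong \R^n$ preserves $W \cong \R^n$. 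Because $W$ is open in $E$, the set $g(W)$ is open in $A$, so $g(W) = A \cap U'$ for some open $U' \subseteq X$, whence $A \sim_x g(W)$ and $A_x = g(W)_x$.

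It remains to verify that $g(W) \subseteq V$, equipped with the order inherited from $V$, is order-isomorphic to $\R^n$. Here $g|_W$ is a local-order isomorphism between $W \cong \R^n$ and $g(W)$, each sitting inside a single ordered open. Given $a \leq_W b$, I would push the directed straight segment from $a$ to $b$ through $g$ to obtain a continuous locally increasing path in $V$; covering it by finitely many ordered opens of $V$ on which $g$ is order-preserving (by compactness), and chaining the resulting comparisons via the subposet axiom of the ordered base, yields $g(a) \leq_V g(b)$. The reverse implication follows symmetrically via $g^{-1}$. The main obstacle is precisely this upgrade of a local-order isomorphism to a global one, since in general the order on an ordered open need not be recoverable from directed paths; the compatibility axioms built into the ordered base, together with the compactness/chaining argument, are what make the step go through.
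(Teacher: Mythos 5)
Your converse direction, and the overall strategy of the forward direction, match the paper's proof, which simply invokes two facts: every isomorphism of ordered spaces is an isomorphism of locally ordered spaces, and every isomorphism of locally ordered spaces is locally an isomorphism of ordered spaces. The gap in your argument is the final claim that ``the reverse implication follows symmetrically via $g^{-1}$.'' Your chaining argument for $a \leq_W b \Rightarrow g(a) \leq_V g(b)$ works precisely because the order on $W \cong \R^n$ is realized by directed paths (the straight segment), which you can cover by finitely many ordered opens that are subposets of $W$ and of $V$, and then chain by transitivity. To run the ``symmetric'' argument for $g(a) \leq_V g(b) \Rightarrow a \leq_W b$ you would need a directed path inside $g(W)$ (or at least inside $V$) from $g(a)$ to $g(b)$ witnessing the relation in $\leq_V$, and no such path exists in general: the order on an ordered open of an arbitrary local order is not path-generated. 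This is exactly the ``strongly connected'' property that the paper later has to impose as an extra hypothesis on a subclass of spaces. You name this obstacle yourself, but the compactness/chaining machinery does not remove it, because there is nothing to chain along.

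The implication you need comes not from chaining but from one more round of shrinking, using directly that $g^{-1}$ is locally order-preserving: choose an ordered open $V_1 \ni x$ with $g^{-1}(V_1) \subseteq W_0$ and $g^{-1}|_{V_1}$ order-preserving into $W_0$, and (using the base axiom) a common subposet refinement of $V$ and $V_1$ at $x$; then re-shrink your box $W$ so that $g(W)$ lands in that refinement. Now $g(a) \leq g(b)$ there gives $a \leq_{W_0} b$, hence $a \leq_W b$ since the box $W$ carries the restricted order from $W_0$. The germ $A_x$ is unaffected by this further shrinking, so the conclusion still follows. Incidentally, the same purely local shrinking argument also yields the forward inequality without any path or compactness argument; this is what the paper's one-line proof encapsulates in the statement that a local-order isomorphism is locally an order isomorphism.
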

\begin{proof}
    This is a direct consequence of the following two facts: every isomorphism of ordered spaces is an isomorphism of locally ordered spaces, and every isomorphism of locally ordered spaces is locally (i.e.~up to restriction) an isomorphism of ordered spaces. 
\end{proof}

\noindent One could argue that, in view of Definition~\ref{def:traversal}, we are only interested in the images of \emph{order} embeddings from an open subset of $\R^n$ to some ordered open set of $X$, but this would not give a sheaf, only a presheaf. In fact, taking the sheafification of this presheaf would give exactly $\oindexp{X}{n}$, which justifies our definition. This is an easy consequence of the fact that they have the same set of germs at every point $x\in X$, and essentially expresses the fact that a euclidean local order is an amalgamation of copies of $\R^n$.

An \emph{étale bundle} over a space $X$ is a morphism of topological spaces $f:Y\xrightarrow{}X$ such that for every $y\in Y$, there is a neighborhood $U$ of $y$ such that $f(U)$ is open and $f_{|U}:U\to f(U)$ is an isomorphism of topological spaces. Recall that there is an equivalence of categories between étale bundles over a topological space $X$ and sheaves on $X$~\cite[\S II.6]{maclane2012sheaves}. This was extended in~\cite[Theorem 5.4]{bubenik2005model} to the locally ordered setting, i.e.~replacing 'topological space' by 'locally ordered space'. In fact, we can reformulate this result by noticing that a structure of local order can be pulled back in a unique way along a local homeomorphism, like for smooth manifold structures. We want to use this operation in order to blow up any local order. The next proposition is just a translation of the mentioned results in our case. We fix a natural number $n$.

\begin{proposition}\label{description_etale}
    Let $X$ be a local order. The étale bundle corresponding to $\oindexp{X}{n}$, noted $\beta_X^+:B(\oindexp{X}{n})\xrightarrow{}X$, can be described as follows: 
    \begin{itemize}
        \item Its underlying set map is given by the projection $\sqcup_{x\in X}\oindexp{X,x}{n}\xrightarrow{}X$;
        \item An ordered base is given by the sets $U^+_A:=\{A_y\,|\,y\in U\}$ for $U$ ordered open set, $A\in \oindexp{X}{n}(U)$; the order relation is the one inherited from $U$.
    \end{itemize}
    Moreover, for every $U$ open and $A\in \oindexp{X}{n}(U)$, the function $U\xrightarrow{}B(\oindexp{X}{n})$, $x\mapsto A_x$ is open and an isomorphism of local orders on its image, and $\beta_X^+$ is a local dihomeomorphism.
\end{proposition}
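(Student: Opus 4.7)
My plan is to verify the claimed description directly, leveraging the classical sheaf–étale correspondence for topological spaces and then adding the ordered structure. The underlying set and topology of $B(\oindexp{X}{n})$ come for free from \cite[\S II.6]{maclane2012sheaves}: the stalk space $\sqcup_{x\in X}\oindexp{X,x}{n}$ with the topology generated by the sets $\{A_y\mid y\in U\}$ (for $U$ open and $A\in\oindexp{X}{n}(U)$) realises the étale bundle of $\oindexp{X}{n}$, makes $\beta_X^+$ a local homeomorphism, and makes the section map $\sigma_A:U\to B(\oindexp{X}{n})$, $x\mapsto A_x$ continuous and open with image the basic open set $U^+_A$. The substantive work is to enrich this picture to the locally ordered setting compatibly with $\oindexp{X}{n}$; by \cite[Theorem 5.4]{bubenik2005model} such an enrichment is unique once we specify it on an open cover.

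The key step is the verification that the sets $U^+_A$, indexed by ordered opens $U$ and equipped with the order transported from $U$ along the bijection $\sigma_A$, form an \emph{ordered} base. Suppose $A_y\in U^+_A\cap V^+_B$. Since $A_y=B_y$ in $\oindexp{X,y}{n}$, by definition of germs there is an open neighborhood $W'$ of $y$ contained in $U\cap V$ with $A\cap W'=B\cap W'$. By the ordered-base axiom for $X$, we may shrink $W'$ to an ordered open $W\ni y$ that is a subposet of both $U$ and $V$. Set $C:=A\cap W=B\cap W\in\oindexp{X}{n}(W)$. Then $A_y\in W^+_C\subseteq U^+_A\cap V^+_B$, and the three transported orders agree on $W^+_C$ because the orders on $W\subseteq U\cap V$ agree on $W$. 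This establishes the ordered-base condition and simultaneously the claim on bases.

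Once the ordered base is in place, the remaining assertions follow routinely. For any open $U$ and $A\in\oindexp{X}{n}(U)$, the section $\sigma_A$ is injective (germs at distinct points lie in distinct stalks) and has image $U^+_A$; on any ordered open $W\subseteq U$ it restricts to an order isomorphism $W\to W^+_{A\cap W}$ by construction, so it is open and an isomorphism of local orders onto its image, proving (c). For (d), the restriction of $\beta_X^+$ to a basic ordered open $U^+_A$ is the inverse of $\sigma_A$ and is therefore an order isomorphism onto the ordered open $U\subseteq X$; since the $U^+_A$ cover $B(\oindexp{X}{n})$, $\beta_X^+$ is a local dihomeomorphism. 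The only point that requires care is the ordered-base compatibility above, which crucially uses both the sheaf property of $\oindexp{X}{n}$ (to extract $A\cap W=B\cap W$ from germ equality) and the ordered-base structure of $X$ itself (to shrink $W$ to a common subposet).
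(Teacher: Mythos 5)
Your proposal is correct and follows essentially the same route as the paper, which gives no written proof at all: it simply declares the proposition to be ``a translation'' of the classical sheaf--étale correspondence \cite[\S II.6]{maclane2012sheaves} and its locally ordered extension \cite[Theorem 5.4]{bubenik2005model}, exactly the two ingredients you invoke. The one piece of genuine content you supply --- shrinking a germ-equality neighborhood to an ordered open that is a common subposet of $U$ and $V$ so that the $U^+_A$ satisfy the ordered-base axiom --- is precisely the verification the paper leaves implicit, and it is carried out correctly.
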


\noindent Now we have an étale bundle over $X$. However, the blowup of $X$ cannot possibly be an étale bundle, since it is always euclidean (and not $X$). Recall that for every ordered open $U\subseteq X$, the empty set belongs to $\oindexp{X}{n}(U)$. This gives a `ghost version' $\sqcup_{x\in X}\{\varnothing_x\}\subseteq B(\oindexp{X}{n})$ of $X$ in $B(\oindexp{X}{n})$, which `completes' the blowup into an étale bundle. We then just need to remove it. An easy example is shown in Fig.~\ref{fig:H}. The bottom space is the realization of the graph $G$ of Fig.~\ref{fig:G}. The upper space is (a subset of, see below) $B(\oindexp{|G|}{1})$. Every column features some $A\in \oindexp{G}{1}(|G|)$ which corresponds to the highlighted part of $|G|$ (in the first column, no part is highlighted, so $A:=\varnothing$). The highlighted part of $B(\oindexp{|G|}{1})$ then corresponds to the open set $|G|_A^+$ (so in the first column, the higlighted part is exactly $\sqcup_{x\in X}\{\varnothing_x\}$). Note that among the five points in the preimage of the branching point of $G$, exactly one is in $|G|_A^+$ for each $A$. It is then easy to see that $|G|_A^+\cong|G|$, i.e.~that $\beta_{|G|}^+$ is étale. Now we get the blowup by removing the ghost version, and we indeed get the space described in the introduction, which is identical to $|G|$ over the edges of $G$, and with four points over the branching point corresponding to the four $1$-traversals at this point. Moreover, the induced topology is exactly the topology of the blowup as described in~\cite{haucourt2024non}.

However, there is a subtelty. In light of Proposition~\ref{traversal}, we have to remove all the $A_x$ with $x\not\in A$. These are not necessarily of the form $\varnothing_x$, for example one can consider the germ of $]0,+\infty[$ at $0$ in $\R$. This subtlety will vanish when we will consider the combinatorial case, since all the `combinatorial embeddings' will be 'maximal' in an appropriate way. To give a first intuition, if we take again the example of $\R$, we see that the germ of $]0,+\infty[$ at a negative point $x$ is $\varnothing_x$, and the germ at a positive point is a traversal, so the germ at $0$ serves as a `transition' between the traversals and the germs of the empty set. But when the embedding is `maximal', for example the embedding of $\R$ in $\R$, no transition is needed. This subtelty is omitted in Fig.~\ref{fig:H}.
\begin{figure}[ht!]
\begin{center}
\begin{tikzpicture}[scale=0.8]
\draw (1.7,5.2) node[right]{\small Neighborhoods of the inverse images};
\draw (1.7,4.63) node[right]{\small of the intersection point};
\draw[densely dotted] (-1,1/3) -- (1,-1/3) ;
\draw[densely dotted] (-1,-1/3) -- (1,1/3) ;
\draw[->,>=stealth] (0,1.3) -- (0,0.7) ;
\begin{scope}[yshift=+35mm]
\draw[ghostgray] (0,0.72) node{{\fontsize{40}{30}\selectfont \faGhost}};
\draw (-1,2/3) -- (1,0) ;
\draw (-1,0) -- (1,2/3) ;
\draw[densely dotted] (-1,-1) -- (-1/6,-1) ;
\draw[densely dotted] (1/6,-1) -- (1,-1) ;
\draw[densely dotted] (-1,-5/3) -- (-1/6,-5/3) ;
\draw[densely dotted] (1/6,-5/3) -- (1,-5/3) ;
\filldraw[fill=white,opacity=0.2] (0,-1) circle (0.02);
\filldraw[fill=white,opacity=0.2] (0,-5/3) circle (0.02);
\filldraw[fill=white,opacity=0.2] (0,-11/9) circle (0.02);
\filldraw[fill=white,opacity=0.2] (0,-13/9) circle (0.02);
\end{scope}
\begin{scope}[xshift=30mm]
\draw (-1,1/3) -- (0,0) -- (1,1/3) ;
\draw[densely dotted] (-1,-1/3) -- (0,0) -- (1,-1/3) ;
\draw[->,>=stealth] (0,1.3) -- (0,0.7) ;
\begin{scope}[yshift=+35mm]
\draw[densely dotted] (-1,2/3) -- (0,1/3) -- (1,2/3) ;
\draw (-1,0) -- (0,1/3) -- (1,0) ;
\filldraw[fill=white] (0,1/3) circle (0.035) ;
\draw (-1,-1) -- (-1/6,-1) ;
\draw (1/6,-1) -- (1,-1) ;
\draw[densely dotted] (-1,-5/3) -- (-1/6,-5/3) ;
\draw[densely dotted] (1/6,-5/3) -- (1,-5/3) ;
\filldraw[fill=white,opacity=0.2] (0,-1) circle (0.02);
\filldraw[fill=white,opacity=0.2] (0,-5/3) circle (0.02);
\filldraw[fill=white,opacity=0.2] (0,-11/9) circle (0.02);
\filldraw[fill=white,opacity=0.2] (0,-13/9) circle (0.02);
\fill (0,-1) circle (0.02);
\end{scope}
\end{scope}
\begin{scope}[xshift=60mm]
\draw (-1,1/3) -- (1,-1/3) ;
\draw[densely dotted] (-1,-1/3) -- (1,1/3) ;
\draw[->,>=stealth] (0,1.3) -- (0,0.7) ;
\begin{scope}[yshift=+35mm]
\draw[densely dotted] (-1,2/3) -- (1,0) ;
\draw (-1,0) -- (1,2/3) ;
\draw (-1,-1) -- (-1/6,-1) ;
\filldraw[fill=white] (0,1/3) circle (0.035) ;
\draw[densely dotted] (1/6,-1) -- (1,-1) ;
\draw[densely dotted] (-1,-5/3) -- (-1/6,-5/3) ;
\draw (1/6,-5/3) -- (1,-5/3) ;
\fill (0,-11/9) circle (0.02);
\filldraw[fill=white,opacity=0.2] (0,-1) circle (0.02);
\filldraw[fill=white,opacity=0.2] (0,-5/3) circle (0.02);
\filldraw[fill=white,opacity=0.2] (0,-11/9) circle (0.02);
\filldraw[fill=white,opacity=0.2] (0,-13/9) circle (0.02);
\end{scope}
\end{scope}
\begin{scope}[xshift=90mm]
\draw[densely dotted] (-1,1/3) --  (1,-1/3) ;
\draw (-1,-1/3) --  (1,1/3) ;
\draw[->,>=stealth] (0,1.3) -- (0,0.7) ;
\begin{scope}[yshift=+35mm]
\draw (-1,2/3) -- (1,0) ; 
\draw[densely dotted] (-1,0) -- (1,2/3) ;
\draw[densely dotted] (-1,-1) -- (-1/6,-1) ;
\draw (1/6,-1) -- (1,-1) ;
\draw (-1,-5/3) -- (-1/6,-5/3) ;
\filldraw[fill=white] (0,1/3) circle (0.035) ;
\draw[densely dotted] (1/6,-5/3) -- (1,-5/3) ;
\filldraw[fill=white,opacity=0.2] (0,-1) circle (0.02);
\filldraw[fill=white,opacity=0.2] (0,-5/3) circle (0.02);
\filldraw[fill=white,opacity=0.2] (0,-11/9) circle (0.02);
\filldraw[fill=white,opacity=0.2] (0,-13/9) circle (0.02);
\fill (0,-13/9) circle (0.02);
\end{scope}
\end{scope}
\begin{scope}[xshift=120mm]
\draw[densely dotted] (-1,1/3) -- (0,0) -- (1,1/3) ;
\draw (-1,-1/3) -- (0,0) -- (1,-1/3) ;
\draw[->,>=stealth] (0,1.3) -- (0,0.7) ;
\begin{scope}[yshift=+35mm]
\draw (-1,2/3) -- (0,1/3) -- (1,2/3) ;
\draw[densely dotted] (-1,0) -- (0,1/3) -- (1,0) ;
\draw[densely dotted] (-1,-1) -- (-1/6,-1) ;
\draw[densely dotted] (1/6,-1) -- (1,-1) ;
\filldraw[fill=white] (0,1/3) circle (0.035) ;
\draw (-1,-5/3) -- (-1/6,-5/3) ;
\draw (1/6,-5/3) -- (1,-5/3) ;
\filldraw[fill=white,opacity=0.2] (0,-1) circle (0.02);
\filldraw[fill=white,opacity=0.2] (0,-5/3) circle (0.02);
\filldraw[fill=white,opacity=0.2] (0,-11/9) circle (0.02);
\filldraw[fill=white,opacity=0.2] (0,-13/9) circle (0.02);
\fill (0,-5/3) circle (0.02);
\end{scope}
\end{scope}
\end{tikzpicture}

\caption{The étale bundle in the simple case of the graph $G$ of Fig.~\ref{fig:G}. We highlight $A$ and $|G|^+_{A}$ for various $A$. We omitted the $A_x$, $A\ne\varnothing$, $x\not\in A$ for simplification.}\label{fig:H}
\end{center}
\end{figure}

\begin{definition}
    Let $X$ be a local order. We set $\Tilde{X}:=\sqcup_{x\in X}\{A_x\in\oindexp{X,x}{n}\,|\,A\ni x\}\subseteq B(\oindexp{X}{n})$ (with the induced topology and local order), and we call it the \emph{$n$-blowup} of $X$. The restriction of $\beta^+_X$ to $\Tilde{X}$ is noted $\beta_X$ and called the \emph{$n$-blowup map}. The restriction of the ordered base $(U^+_A)_{U,A}$ is noted $(U_A)_{U,A}$.
\end{definition}

\noindent We will omit the dimension $n$ when it is clear from the context. Note that in particular, $\Tilde{X}$ is a euclidean local order: any point belongs to some open set $V_A$ locally isomorphic to an open subset of $\R^n$, namely $A$.

\begin{lemma}
    With the same notations, for any ordered open $U_A$, $(\beta_X)_{|U_A}$ is a dihomeomorphism on its image $A$. In particular, $\beta_X$ is a local embedding, and $\Tilde{X}$ is euclidean.
\end{lemma}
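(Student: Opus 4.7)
The plan is to unwind the definition of $U_A$ and directly invoke Proposition~\ref{description_etale}, which already does most of the work at the level of the étale bundle $B(\oindexp{X}{n})$. First I would identify the underlying set of $U_A$. By definition $U_A = U_A^+ \cap \tilde X$, and a germ $A_y \in U_A^+$ belongs to $\tilde X$ exactly when $y \in A$; since $A \subseteq U$, this gives $U_A = \{A_y \mid y \in A\}$. In particular $\beta_X$ maps $U_A$ onto $A$, and the map is injective because distinct germs in this set lie over distinct base points.

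Next I would exhibit the inverse. By Proposition~\ref{description_etale}, the section $s_A : U \to B(\oindexp{X}{n})$, $x \mapsto A_x$, is open and an isomorphism of locally ordered spaces onto its image $U_A^+$. Since $A$ is an open subset of $U$ (as the image of a euclidean local order under a locally ordered embedding), restricting $s_A$ to $A$ yields an isomorphism of locally ordered spaces $A \xrightarrow{\sim} U_A$; by construction this restriction is a two-sided inverse to $\beta_X|_{U_A}$. Hence $\beta_X|_{U_A} : U_A \to A$ is a dihomeomorphism.

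For the two consequences: the collection $(U_A)_{U,A}$ is by definition an ordered base, hence an open cover, of $\tilde X$, and on each element $\beta_X$ is a dihomeomorphism onto an ordered open subset of $X$; this exhibits $\beta_X$ as a local dihomeomorphism, which is in particular a local embedding. Finally, every point $A_x \in \tilde X$ lies in some $U_A$, and $U_A \cong A$ is by assumption euclidean of dimension $n$, so $A_x$ has a further ordered neighborhood dihomeomorphic to $\R^n$; this shows $\tilde X$ is euclidean.

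I do not expect any real obstacle here: the entire statement is a bookkeeping exercise made routine by Proposition~\ref{description_etale}. The only point requiring a moment of care is the identification $U_A = \{A_y \mid y \in A\}$, where one must remember that the defining condition for $\tilde X$ is $y \in A$ rather than $y \in U$, and hence that the image of $\beta_X|_{U_A}$ is $A$ itself, not $U$.
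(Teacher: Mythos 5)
Your argument is essentially the paper's own proof (which is a one-liner): the inverse of $(\beta_X)_{|U_A}$ is $x\mapsto A_x$, continuous because it is the restriction of the continuous section from Proposition~\ref{description_etale}, and order-compatible by the definition of the order on $U_A$. One assertion in your write-up is false, though: $A$ need \emph{not} be an open subset of $U$ --- an element of $\oindexp{X}{n}(U)$ is merely a subset isomorphic to some $n$-euclidean local order, and nothing forces $n$ to match the local dimension of $X$ (e.g.\ $\R\times\{0\}\in\oindexp{\R^2}{1}(\R^2)$ is not open), nor is the image of a local embedding open in general. Fortunately this claim is also unnecessary: the restriction of a continuous map to an \emph{arbitrary} subset is continuous for the subspace topology, which is all the argument needs and is exactly what the paper invokes. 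With that parenthetical removed, your proof matches the paper's.
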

\begin{proof}
    The inverse is given by $x\mapsto A_x$, which is continuous by Proposition~\ref{description_etale}, since it is the restriction of a continuous map. The orders correspond by definition.
\end{proof}

\begin{theorem}\label{blowup}
    Let $X$ be any local order. Then its $n$-blowup $\Tilde{X}$ satisfies the following universal property: for any euclidean local order $E$ of dimension $n$, for any local embedding $f:E\xrightarrow{}X$, there is a unique continuous map $\Tilde{f}:E\xrightarrow{}\Tilde{X}$ making the following diagram commute 
    \begin{center}
        \begin{tikzcd}
        & \Tilde{X}\arrow[d,"\beta_X"]\\
        E \arrow[r,"f"'] \arrow[ur, "\Tilde{f}", dashed] & X
    \end{tikzcd}
    \end{center}
    Moreover, $\Tilde{f}$ is a local embedding. 
\end{theorem}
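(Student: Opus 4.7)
The plan is to construct $\tilde f$ pointwise from local sections of $\oindexp{X}{n}$ produced by $f$, then use the étale-like structure of $\beta_X$ from Proposition~\ref{description_etale} to verify commutativity, continuity, the local embedding property, and uniqueness. The only substantive geometric ingredient will be topological invariance of domain.

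For the construction, I fix $p \in E$ and, using that $E$ is euclidean of dimension $n$ and $f$ is a local embedding, choose an ordered neighborhood $U$ of $p$ dihomeomorphic to an open subset of $\R^n$, contained in some ordered open $W \ni f(p)$, and such that $f|_U : U \to f(U)$ is a dihomeomorphism onto its image. Then $f(U) \in \oindexp{X}{n}(W)$ contains $f(p)$, so the germ $(f(U))_{f(p)}$ is an $n$-traversal by Proposition~\ref{traversal}; I set $\tilde f(p) := (f(U))_{f(p)} \in \tilde X$. Independence of the choice of $U$ is routine: given two such neighborhoods $U_1, U_2$, any common refinement $U_3 \subseteq U_1 \cap U_2$ satisfies $f(U_3) = f(U_i) \cap O$ for some open $O \ni f(p)$ (since $f|_{U_i}$ is a dihomeomorphism), so the two germs agree. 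Commutativity $\beta_X \circ \tilde f = f$ is immediate from $\beta_X((f(U))_{f(p)}) = f(p)$.

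For continuity and the local embedding property simultaneously, I observe that on the chosen $U$, $\tilde f|_U$ factors as
\[
U \xrightarrow{f|_U} f(U) \xrightarrow{y \mapsto (f(U))_y} W_{f(U)},
\]
where the first arrow is a dihomeomorphism by construction and the second is a dihomeomorphism onto the ordered open set $W_{f(U)} \subseteq \tilde X$ by Proposition~\ref{description_etale}. Hence $\tilde f|_U$ is a dihomeomorphism onto an ordered open subset of $\tilde X$, which yields both continuity and the local embedding property.

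For uniqueness, let $g : E \to \tilde X$ be continuous with $\beta_X \circ g = f$ and fix $p \in E$. Write $g(p) = A_{f(p)}$ with $A \in \oindexp{X}{n}(W)$ and $f(p) \in A$. By continuity of $g$, and shrinking, choose a neighborhood $V$ of $p$ with $g(V) \subseteq W_A$ and $f|_V$ a dihomeomorphism onto $f(V)$. Then $g(q) = A_{f(q)}$ for every $q \in V$, which forces $f(V) \subseteq A$. Since $A$ and $f(V)$ are topological $n$-manifolds with $f(V) \subseteq A$, topological invariance of domain implies $f(V)$ is open in $A$; hence there is an open $O \ni f(p)$ with $O \cap A = O \cap f(V)$, so $A_{f(p)} = (f(V))_{f(p)} = \tilde f(p)$, and $g = \tilde f$. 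I expect this final identification of germs via invariance of domain to be the main obstacle; everything else is a direct translation of the sheaf--étale correspondence of Proposition~\ref{description_etale}.
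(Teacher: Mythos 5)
Your proposal is correct and its overall architecture coincides with the paper's: define $\tilde f(p)$ as the germ $(f(U))_{f(p)}$ for a small ordered neighbourhood $U$ of $p$ on which $f$ restricts to an embedding, check independence of $U$ by passing to a common refinement, obtain continuity and the local embedding property by factoring $\tilde f|_U$ through the section map $y\mapsto(f(U))_y$ of Proposition~\ref{description_etale}, and for uniqueness reduce to showing that the image of a small neighbourhood under $f$ is contained in, and open in, the representative $A$ of the candidate value $A_{f(p)}$. The one genuine difference lies in how that last openness is justified. The paper argues with the \'etale structure alone (the basic opens $V_{f(U)}$ and $V_A$ are open in $\tilde X$ and $\beta_X|_{V_A}$ is a homeomorphism onto $A$), which in effect exhibits the locus where the germs of $f(U)$ and $A$ agree as an open subset of $A$; you instead apply topological invariance of domain to the continuous injection $f|_V\colon V\to A$ between locally euclidean spaces of dimension $n$, which is valid also in the non-Hausdorff setting since the argument is local. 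Your route imports Brouwer's theorem, but in exchange it establishes directly that all of $f(V)$ is open in $A$, hence that the two germs agree at $f(p)$ --- exactly the point where the paper's softer argument is most compressed. It is also consonant with the paper's own combinatorial invariance-of-domain statement, Lemma~\ref{invariance_domain}, which plays the analogous role in the precubical setting. The only cosmetic slip is the phrase ``$U$ \dots{} contained in some ordered open $W\ni f(p)$'': you mean $f(U)\subseteq W$, as your subsequent use of $f(U)\in\oindexp{X}{n}(W)$ makes clear.
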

\begin{proof}
    First we prove uniqueness. Suppose that $\Tilde{f}$ exists. Take $x\in E$. Because of the commutation of the above diagram, $\Tilde{f}(x)$ is of the form $A_{f(x)}\in\oindexp{X,f(x)}{n}$ with $f(x)\in A$. Since $\Tilde{f}(x)$ corresponds to the equivalence class of a subspace $A$ of an ordered open $V$ of $X$, $V_A$ is an ordered neighborhood of $\Tilde{f}(x)$. Since $\Tilde{f}$ is continuous, there is an ordered neighborhood $U$ of $E$, that we can assume isomorphic to $\R^n$, such that $\Tilde{f}(U)\subseteq V_A$. Composing with $\beta_X$, we get that $f(U)\subseteq A$. 
    \begin{claim}
        $A\sim_{f(x)} f(U)$.
    \end{claim}
    \begin{proof}
        $V_{f(U)}$ is open in $V_A$ because they are both open in $\tilde{X}$, and $\beta_X$ restricted to $V_A$ is a homeomorphism on its image $A$, so $f(U)$ open in $A$: there is an open set $W$ of $X$ such that $f(U)=A\cap W$.
    \end{proof}
    \noindent So $\Tilde{f}(x)$ is necessarily the class of $f(U)$. This proves uniqueness. Now for existence, just define $\Tilde{f}$ as suggested. More precisely, since $f$ is a local embedding, we can restrict to an open subset $U$ of $E$ where it is an isomorphism (of ordered spaces, but we don't really need that) on its image. Then we define $\Tilde{f}$ on $U$ by $x\mapsto f(U)_{f(x)}$. Now this is well-defined on all $E$ because if $U$ and $V$ are two such open subsets with $x\in U\cap V$, then $f(U)\sim_{f(x)} f(V)$ if $U$ and $V$ are neighborhoods of $x$, because they are both equivalent to $f(U\cap V)$. Then Proposition~\ref{description_etale} and the fact that $f$ is a local embedding automatically gives that $\Tilde{f}$ is a local embedding.
\end{proof}

\noindent In fact, we can study the functoriality of the blowup in general. 

\begin{proposition}\label{functoriality}
    Let $f:X\xrightarrow{}Y$ be any map between local orders. The following are equivalent:
    \begin{enumerate}
        \item There is a unique continuous map $\Tilde{f}:\Tilde{X}\xrightarrow{}\Tilde{Y}$ making the following square commute.
        \begin{center}
            \begin{tikzcd}
                \Tilde{X}\arrow[r, "\Tilde{f}"]\arrow[d,"\beta_X"']&\Tilde{Y}\arrow[d, "\beta_Y"]\\
                X\arrow[r, "f"']&Y
            \end{tikzcd}
        \end{center}
        Moreover, $\Tilde{f}$ is a local embedding.
        \item There is a local embedding $\Tilde{f}:\Tilde{X}\xrightarrow{}\Tilde{Y}$ making the above square commute
        \item For every local embedding $g:E\xrightarrow{}X$ with $E$ euclidean, $f\circ g$ is a local embedding. 
    \end{enumerate}
\end{proposition}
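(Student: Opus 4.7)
My plan begins with the routine implications and then focuses on the substantive direction. The implication $(1)\Rightarrow(2)$ is immediate (forget the uniqueness clause). For $(2)\Rightarrow(3)$, given a local embedding $g:E\to X$ with $E$ euclidean of dimension $n$, I invoke Theorem~\ref{blowup} to lift $g$ to a local embedding $\tilde g:E\to\tilde X$ satisfying $\beta_X\circ\tilde g=g$, and then write
\[f\circ g\;=\;f\circ\beta_X\circ\tilde g\;=\;\beta_Y\circ\tilde f\circ\tilde g,\]
which is a composition of three local embeddings ($\beta_Y$ by the lemma preceding the theorem, $\tilde f$ by hypothesis, and $\tilde g$ from Theorem~\ref{blowup}), hence itself a local embedding.

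The substantive direction is $(3)\Rightarrow(1)$, where I must construct $\tilde f$ from scratch and control its properties. My strategy is to define $\tilde f$ pointwise by mimicking the argument of Theorem~\ref{blowup}. Given $\tilde x=A_x\in\tilde X$, pick a representative euclidean embedding $\iota:E\hookrightarrow X$ with image $A$ and set $e:=\iota^{-1}(x)$. By hypothesis (3), $f\circ\iota:E\to Y$ is a local embedding, so Theorem~\ref{blowup} applied to $Y$ yields a unique local embedding $\widetilde{f\circ\iota}:E\to\tilde Y$ lifting $f\circ\iota$, and I set
\[\tilde f(A_x)\;:=\;\widetilde{f\circ\iota}(e).\]

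I would then verify, in order: \emph{well-definedness}, i.e.~independence of the representative $\iota$, by observing that on a small enough common neighborhood of $x$ the explicit local recipe supplied by the proof of Theorem~\ref{blowup} produces the same germ in $\tilde Y$ for any two representatives; \emph{continuity together with the local embedding property}, by working on a chart $U_A\subseteq\tilde X$, where the preceding lemma gives $\beta_X|_{U_A}$ as a dihomeomorphism onto $A$, so that $\tilde f|_{U_A}$ factors as $\widetilde{f\circ\iota}\circ\iota^{-1}\circ\beta_X|_{U_A}$, a composition of a local embedding with dihomeomorphisms; \emph{commutation} of the square, which is immediate from $\beta_Y\circ\widetilde{f\circ\iota}=f\circ\iota$ and $\beta_X(A_x)=x=\iota(e)$; and \emph{uniqueness}, by applying the uniqueness clause of Theorem~\ref{blowup} chart by chart, since any continuous $\tilde f'$ making the square commute restricts on the euclidean chart $U_A$ to a continuous lift of $f|_A\circ\iota$ through $\beta_Y$ and is therefore pinned down.

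The step I expect to be the main obstacle is the well-definedness of $\tilde f$ on germs: one must look inside the construction of the universal lift in Theorem~\ref{blowup} to confirm that $\widetilde{f\circ\iota}(e)$ depends only on the germ $A_x$ and not on the particular representative $\iota$. Once that is in hand, each remaining verification localises to a single chart $U_A$ and follows from the dihomeomorphism property of $\beta_X|_{U_A}$, the universal property of $\tilde Y$ just applied to build $\widetilde{f\circ\iota}$, and the stability of local embeddings under composition.
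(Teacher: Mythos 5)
Your proposal is correct, and your treatments of $(1)\Rightarrow(2)$ and $(2)\Rightarrow(3)$ coincide with the paper's (lift $g$ to $\tilde g$ via Theorem~\ref{blowup} and write $f\circ g=\beta_Y\circ\tilde f\circ\tilde g$ as a composite of local embeddings). Where you genuinely diverge is the substantive direction $(3)\Rightarrow(1)$: you rebuild $\tilde f$ germ by germ, choosing for each point $A_x\in\tilde X$ a representative embedding $\iota:E\hookrightarrow X$, lifting $f\circ\iota$ through $\beta_Y$ via Theorem~\ref{blowup}, and then paying for this locality with a well-definedness check on germs, a gluing argument for continuity, and a chart-by-chart uniqueness argument. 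The paper instead exploits the fact that $\tilde X$ is \emph{itself} a euclidean local order of dimension $n$ and that $\beta_X$ is a local embedding (the lemma preceding Theorem~\ref{blowup}): condition (3) applied to $g=\beta_X$ gives that $f\circ\beta_X:\tilde X\to Y$ is a local embedding out of a euclidean local order, and a single global application of Theorem~\ref{blowup} to this map produces the unique continuous lift $\tilde f$ together with the local embedding property --- no representatives to choose, nothing to glue. Your route buys an explicit pointwise formula $\tilde f(A_x)=\widetilde{f\circ\iota}(e)$ at the cost of the germ-independence verification you rightly single out as the delicate step; the paper's route buys brevity and sidesteps that verification entirely, since well-definedness is absorbed into the uniqueness clause of Theorem~\ref{blowup} applied once to all of $\tilde X$.
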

\begin{proof}
    Clearly (i) implies (ii). Now suppose (ii), and take a local embedding $g:E\xrightarrow{}X$. It admits a lift $\Tilde{g}:E\xrightarrow{}\Tilde{X}$ which is a local embedding, so $f\circ g=\beta_Y\circ\Tilde{f}\circ \Tilde{g}$ is a local embedding, so (ii) implies (iii). Finally, if we suppose (iii) then $f\circ \beta_X$ is a local embedding and $\Tilde{X}$ is euclidean, so there is a unique continuous lift which is moreover a local embedding, by Theorem~\ref{blowup}. 
\end{proof}

\noindent A morphism of local orders satisfying one of the above conditions is called \emph{weakly euclidean}. Notice that the uniqueness part of (i) shows that $X\mapsto\Tilde{X}$ is functorial from the category of local orders and weakly euclidean morphisms to the category of euclidean local orders and local embeddings. Now we finally turn to the proof of the announced adjunction.

\begin{lemma}\label{we_euclidean}
    Let $f:E\xrightarrow{}X$ be a morphism of local orders with $E$ euclidean. Then $f$ is weakly euclidean if and only if it is a local embedding.
\end{lemma}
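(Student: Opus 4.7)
The plan is to derive both implications directly from the equivalent characterization of ``weakly euclidean'' given by condition (iii) of Proposition~\ref{functoriality}: a morphism $h\colon A\to B$ is weakly euclidean iff for every local embedding $g\colon E'\to A$ with $E'$ euclidean, the composite $h\circ g$ is a local embedding.

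For the implication ``local embedding $\Rightarrow$ weakly euclidean'', I would simply observe that the composition of two local embeddings is a local embedding (this is immediate from the definition, since both order-preservation and the existence of a continuous order-preserving inverse on the image are preserved under composition after restriction to sufficiently small neighborhoods). Hence, for any local embedding $g\colon E'\to E$ with $E'$ euclidean, the map $f\circ g$ is a local embedding, so condition (iii) of Proposition~\ref{functoriality} is satisfied.

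For the converse ``weakly euclidean $\Rightarrow$ local embedding'', the key observation is that since $E$ itself is euclidean, we may take as our test embedding the identity $\id_E\colon E\to E$, which is trivially a local embedding. Condition (iii) of Proposition~\ref{functoriality} then forces $f=f\circ\id_E$ to be a local embedding. There is no real obstacle here; the lemma is essentially a direct consequence of the fact that the class of test domains in condition (iii) already contains $E$ itself when $E$ is euclidean.
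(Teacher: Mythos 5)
Your proof is correct and follows essentially the same route as the paper: the forward direction is the observation that local embeddings compose (which the paper dismisses as ``clearly''), and the converse is obtained exactly as in the paper by applying characterization (iii) of Proposition~\ref{functoriality} to the identity of $E$.
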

\begin{proof}
    A local embedding is clearly weakly euclidean. Conversely, apply the characterization (iii) of Proposition~\ref{functoriality} to the identity $E\xrightarrow{}E$.
\end{proof}

\begin{corollary}
    The category $\mathcal{E}_n$ of euclidean local orders and local embeddings (of dimension $n$) is a coreflective subcategory of the category of local orders and weakly euclidean morphisms. In addition, $\mathcal{E}_n$ is coreflective in $\mathbb{L}_e$.
\end{corollary}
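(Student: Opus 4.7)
The plan is to exhibit the blowup functor $X \mapsto \tilde{X}$ as a right adjoint to the inclusion $i:\mathcal{E}_n\hookrightarrow \mathbb{L}_{we}$, where I write $\mathbb{L}_{we}$ for the category of local orders with weakly euclidean morphisms. The discussion following Proposition~\ref{functoriality} already records that $\tilde{(\_)}:\mathbb{L}_{we}\to \mathcal{E}_n$ is well-defined and functorial, so the only remaining task is to produce a natural bijection
\[
\mathrm{Hom}_{\mathbb{L}_{we}}(i(E),X)\;\cong\;\mathrm{Hom}_{\mathcal{E}_n}(E,\tilde{X})
\]
for every $E\in\mathcal{E}_n$ and every $X\in \mathbb{L}_{we}$.

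The key observation is that Lemma~\ref{we_euclidean} identifies the left-hand side with the set of local embeddings $E\to X$, while the right-hand side is by definition the set of local embeddings $E\to\tilde{X}$. Theorem~\ref{blowup} then \emph{is} precisely this bijection: each local embedding $f:E\to X$ lifts to a unique local embedding $\tilde{f}:E\to\tilde{X}$ with $\beta_X\circ\tilde{f}=f$, and conversely any local embedding $g:E\to\tilde{X}$ gives back the local embedding $\beta_X\circ g:E\to X$ (the composite is a local embedding since both $g$ and $\beta_X$ are). The counit of the adjunction is thus $\beta_X:\tilde{X}\to X$, which indeed lies in $\mathbb{L}_{we}$ because its source is euclidean, so that by Lemma~\ref{we_euclidean} the local embedding $\beta_X$ is weakly euclidean. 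Naturality in $X$ is essentially free from the uniqueness clause in Theorem~\ref{blowup}: given a weakly euclidean $h:X\to Y$ and a local embedding $f:E\to X$, the morphism $\tilde{h}\circ\tilde{f}:E\to\tilde{Y}$ satisfies $\beta_Y\circ\tilde{h}\circ\tilde{f}=h\circ\beta_X\circ\tilde{f}=h\circ f$, so $\tilde{h}\circ\tilde{f}=\widetilde{h\circ f}$; naturality in $E$ is analogous.

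For the second assertion, I first check that $\mathbb{L}_e$ is a subcategory of $\mathbb{L}_{we}$: if $f:X\to Y$ is a local embedding and $g:F\to X$ is a local embedding with $F$ euclidean, then $f\circ g$ is a local embedding, so $f$ satisfies the characterization (iii) of Proposition~\ref{functoriality} and is weakly euclidean. Consequently the blowup functor restricts to $\mathbb{L}_e\to\mathcal{E}_n$ (by part (i) of the same proposition, $\tilde{f}$ is in any case a local embedding). The hom-set bijection above, restricted to local embeddings on both sides, then yields the coreflection of $\mathcal{E}_n$ inside $\mathbb{L}_e$.

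The argument is really a packaging of Theorem~\ref{blowup}; there is no substantive obstacle beyond the small bookkeeping check that $\beta_X$ belongs to the correct hom-sets (which is exactly what Lemma~\ref{we_euclidean} delivers) and the verification of naturality, both of which reduce to the uniqueness clauses already proved.
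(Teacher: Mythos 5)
Your proof is correct and follows essentially the same route as the paper, which simply cites Lemma~\ref{we_euclidean} for fullness and Theorem~\ref{blowup} together with Proposition~\ref{functoriality} for the right adjoint $X\mapsto\tilde X$. Your write-up merely spells out the hom-set bijection, counit, and naturality checks that the paper leaves implicit.
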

\begin{proof}
    For the first part, note that $\mathcal{E}_n$ is a full subcategory by Lemma~\ref{we_euclidean}. In both cases, the right adjoint is given by $X\mapsto \Tilde{X}$, by Theorem~\ref{blowup} and Proposition~\ref{functoriality}.
\end{proof}

\subsection{Locally strongly connected spaces}
We can weaken the definition of weakly euclidean morphism if we restrict to a subcategory of locally ordered spaces to \emph{locally strongly connected spaces}. This is interesting because typical examples of locally strongly connected spaces are given by locally ordered realizations of precubical sets, essentially by definition. 

\begin{definition}
    An ordered space $X$ is called \emph{strongly connected} if for all $x,\,y\in X$, $x\leq y$ if and only if there is a directed path $\gamma:\R\xrightarrow{}X$ and $a,\,b\in\R$ with $a\leq b$ and $\gamma(a)=x,\,\gamma(b)=y$. A local order $X$ is said to be \emph{locally strongly connected} if $X=\cup_i U_i$ with $U_i$ ordered strongly connected spaces.
\end{definition}

\noindent Note that in the definition of strongly connected ordered space, one direction is superfluous. Indeed, take $x,y$ in $X$ ordered space, $\gamma:\R\xrightarrow{}X$ directed path and $a,\,b\in\R$ with $a\leq b$ and $\gamma(a)=x,\,\gamma(b)=y$. We have $[a,b]$ compact, so we can cover it with open intervals $(]a_i,b_i[)_{i\in I}$, $I$ finite and $\gamma$ order preserving on each $]a_i,b_i[$. We can suppose that $I=\{0,\dots,n\}$ and $i\leq j$ implies $a_i\leq a_j$, and that $]a_i,b_i[\cap [a,b]\ne\varnothing$ for all $i$. An easy induction then shows that $\gamma(a)\leq\gamma(a_i)$ for all $i>0$ (two consecutive intervals intersect because the union of all the intervals covers $[a,b]$). Then we conclude that $\gamma(a)\leq\gamma(b)$ because $b\in]a_n,b_n[$. 

\begin{lemma}\label{we_mor}
    Let $X$ and $Y$ be local orders with $X$ locally strongly connected, $f:X\xrightarrow{}Y$ a continuous map. If $f$ satisfies one of the equivalent conditions of Proposition~\ref{functoriality}, then $f$ is a morphism of locally ordered spaces (so it is weakly euclidean).
\end{lemma}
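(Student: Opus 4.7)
The plan is to establish local order-preservation of $f$ by transporting the order along directed paths and then invoking condition (iii) of Proposition~\ref{functoriality}.

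First, I will fix $x \in X$ and an ordered neighbourhood $V$ of $f(x)$ in $Y$. Combining local strong connectedness of $X$ with continuity of $f$, I will choose an ordered strongly connected open neighbourhood $U$ of $x$ satisfying $f(U) \subseteq V$. It then suffices to prove that $f|_U : U \to V$ is order-preserving.

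Given $y \leq z$ in $U$, strong connectedness of $U$ provides a directed path $\gamma : \mathbb{R} \to U$ with $\gamma(a) = y$ and $\gamma(b) = z$ for some $a \leq b$. The observation immediately following the definition of strongly connected ordered space asserts that any morphism of local orders $\mathbb{R} \to V$ into an ordered space $V$ is globally order-preserving. Hence it will be enough to show that $h := f \circ \gamma : \mathbb{R} \to V$ is itself a morphism of locally ordered spaces; evaluating at the endpoints then yields $f(y) = h(a) \leq h(b) = f(z)$.

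To show that $h$ is locally order-preserving, I will fix $t \in \mathbb{R}$ and analyze the local behaviour of $\gamma$. Since $\gamma$ is a morphism of local orders, there is an open interval $I \ni t$ on which $\gamma$ is order-preserving into some ordered open of $X$. If, after shrinking, $\gamma|_I$ can be taken to be a local embedding $I \to X$, then condition (iii) of Proposition~\ref{functoriality} applied to $\gamma|_I$ (a local embedding from a $1$-dimensional euclidean local order) gives that $h|_I = (f \circ \gamma)|_I$ is a local embedding into $Y$; in particular it is a morphism of locally ordered spaces and hence locally order-preserving at $t$. If instead $\gamma$ is locally constant at $t$, then so is $h$, which is trivially locally order-preserving at $t$.

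The main obstacle will be the intermediate regime in which $\gamma$ is neither locally constant nor a local embedding on any open neighbourhood of $t$ (for instance, $\gamma$ may be constant on $[t, t+\varepsilon)$ and strictly monotone on $(t-\varepsilon, t]$). At such transition points I plan to glue the two regimes: local order-preservation of $h$ on each one-sided neighbourhood, combined with continuity of $h$ at $t$ inside the ordered target $V$, will upgrade to order-preservation on a full two-sided ordered neighbourhood of $t$, completing the argument via the global order-preservation observation and hence yielding $f(y) \leq f(z)$.
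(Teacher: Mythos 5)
Your overall strategy is the same as the paper's: reduce to a directed path $\gamma$ witnessing $y\leq z$ inside a strongly connected ordered open $U\subseteq f^{-1}(V)$, use condition (iii) of Proposition~\ref{functoriality} to make $f\circ\gamma$ a directed path into $V$, and conclude with the observation (stated after the definition of strong connectedness) that a directed path into an ordered space is globally order-preserving. The gap is in how you handle the fact that $\gamma$ need not be a local embedding. Your pointwise trichotomy --- locally an embedding near $t$, locally constant at $t$, or a one-sided ``transition point'' --- is not exhaustive. A directed path can fail to be injective on every neighbourhood of $t$ without being locally constant at $t$ and without being constant on a one-sided neighbourhood of $t$: take a Cantor-function-type path, constant on a dense open family of intervals whose endpoints accumulate at every point of the Cantor set. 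At such an accumulation point none of your three cases applies, and your gluing step, which presupposes exactly two one-sided regimes meeting cleanly at $t$, has nothing to glue. Since condition (iii) only applies to local embeddings from euclidean local orders, you cannot invoke it near such a point, and the argument stalls.

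The paper resolves this with a single global move rather than a pointwise analysis: reparametrize $\gamma$ by collapsing all maximal intervals on which it is constant. By antisymmetry of the order on each ordered open, the fibres of $\gamma$ are locally intervals, so the quotient of the domain is again a connected $1$-dimensional euclidean local order (or a point, in which case $y=z$ and there is nothing to prove) and the induced path is a local embedding. One then applies condition (iii) once to this reparametrized path and evaluates at the endpoints. If you replace your case analysis by this global reparametrization, the rest of your argument goes through unchanged.
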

\begin{proof}
    Let $V$ an ordered open of $Y$. $U:=f^{-1}(V)$ is open in $X$, and up to restriction we can suppose that $U$ is strongly connected. Now take $x,\, y\in X$, $x\leq y$. Take $\gamma:\R\xrightarrow{}U$ a directed path and $a,\,b\in\dom (\gamma)$ with $a\leq b$ and $\gamma(a)=x,\,\gamma(b)=y$. Note that we can suppose that $\gamma$ is a local embedding, by collapsing the segments where it is locally constant. But $f$ satisfies 3 of Proposition~\ref{functoriality}, so $f\circ\gamma:\R\xrightarrow{}V$ is a local embedding, in particular it is a directed path, so $f(x)\leq f(y)$.
\end{proof}

\section{A combinatorial description in the case of realizations of precubical sets}\label{sec_pset}

We can give a combinatorial characterization of the embeddings of $\R^n$ into the locally ordered realization of a precubical set. In this section, we suppose that all precubical sets have a locally ordered realization. The first step of the proof is a lemma which is interesting on its own because it can be seen as local variant of the well-known Theorem of invariance of domain~\cite[Theorem 2B.3]{hatcher2005algebraic}. At the end of the section, we give a consequence of the lemmas we proved, showing that there is a unique precubical set whose locally ordered realization is isomorphic to $\R^n$, namely the `infinite ($n$-)grid'. However, we have to start with a technical tool, namely subdivisions of precubical sets. The proofs for the corresponding section~\ref{subdiv}, which are purely technical, can be found in Appendix~\ref{sec:proof-subdiv}.

\subsection{Subdivisions}\label{subdiv}
The \(s\)-\emph{subdivision} functor (for \(s\in\mathbb N\setminus\{0\}\)) is the unique monoidal realization functor which sends the graph \((\cdot\to\cdot)\) to the graph \(G_s=(\cdot\to\cdots\to\cdot)\) made of \(s\) arrows. 
Equivalently, it is the unique realization functor which sends the $n$-cube, which is the \(n\)-fold tensor product \((\cdot\to\cdot)^{\otimes n}\), to \(G_s^{\otimes n}\). It will be convenient to represent the vertices of \(G_s\) as the fractions \(\tfrac is\) with \(i\in\{0,\ldots,s\}\) and its arrows as the fractions \(\tfrac {2i-1}{2s}\) with \(i\in\{1,\ldots,s\}\); indeed, the source and target maps are \(t\mapsto t-\tfrac1{2s}\) and \(t\mapsto t+\tfrac1{2s}\). Following this point of view, the elements of the subdivision \(\tfrac1sK\) of a precubical set \(K\) are the ordered pairs \((x,(t_1,\ldots,t_m))\) with \(t_j\in\{\tfrac1{2s},\tfrac1s,\ldots,\tfrac{s-1}s,\tfrac{2s-1}{2s}\}\) and \(\dim x=m\); the dimension of \((x,(t_1,\ldots,t_m))\) is the cardinality \(n\) of the set \(\{j\in\{1,\ldots,m\}\:|\:t_j\not\in\{\tfrac1s,\ldots,\tfrac{s-1}s\}\}\); the \((n-1)\)-face operators are defined by 
\[
\delta^{-}_{i,n}(x,(t_1,\ldots,t_m)) 
=
\left\{
\begin{array}{ll}
(x,(t_1,\ldots,t_j-\tfrac 1{2s},\ldots,t_m)) & \text{if \(t_j\not=\tfrac 1{2s}\)} \\[2mm]
(\delta^{-}_{j,m}x,(t_1,\ldots,\hat t_j,\ldots,t_m))  & \text{if \(t_j=\tfrac 1{2s}\)} 
\end{array}
\right.
\] 

\[
\delta^{+}_{i,n}(x,(t_1,\ldots,t_m)) 
=
\left\{
\begin{array}{ll}
(x,(t_1,\ldots,t_j+\tfrac1{2s},\ldots,t_m)) & \text{if \(t_j\not=\tfrac{2s-1}{2s}\)} \\[2mm]
(\delta^{+}_{j,m}x,(t_1,\ldots,\hat t_j,\ldots,t_m))  & \text{if \(t_j=\tfrac{2s-1}{2s}\)} 
\end{array}
\right.
\] 
with \(j\) being the index of the \(i^{\emph{th}}\) occurrence of an arrow of \(G_s\) in the \(m\)-tuple \((t_1,\ldots,t_m)\). For example, we have represented a simple square and its baryncentric (i.e.~$s=2$) subdivision $(\cdot\xrightarrow{}\cdot\xrightarrow{}\cdot)^{\otimes 2}$:
\begin{center}
  \begin{tikzpicture}
    \fill[mygray] (0,0) rectangle (3,3);
      \filldraw (0,0) circle (0.05);
      \filldraw (3,0) circle (0.05);
      \filldraw (0,3) circle (0.05);
      \filldraw (3,3) circle (0.05);
      \draw
      (0,0) edge[below,thick,"$e$"'] (3,0)
      (3,0) edge[below,thick] (3,3)
      (0,0) edge[below,thick] (0,3)
      (0,3) edge[below,thick] (3,3)
      ;
      \draw (1.5,1.5) node{$c$};
    \begin{scope}[xshift=45mm]
        \fill[mygray] (0,0) rectangle (3,3);
        \filldraw (0,0) circle (0.05);
        \filldraw (3,0) circle (0.05);
        \filldraw (0,3) circle (0.05);
        \filldraw (3,3) circle (0.05);
        \filldraw (1.5,0) circle (0.05);
        \filldraw (0,1.5) circle (0.05);
        \filldraw (3,1.5) circle (0.05);
        \filldraw (1.5,3) circle (0.05);
        \filldraw (1.5,1.5) circle (0.05);
        \draw
        (0,0) edge[below,thick,"\tiny ${(e,1/4)}$"'] (1.5,0)
        (1.5,0) edge[below,thick,"\tiny ${(e,3/4)}$"'] (3,0)
        (3,0) edge[below,thick] (3,3)
        (0,0) edge[below,thick] (0,3)
        (0,3) edge[below,thick] (3,3)

        (1.5,0) edge[below,thick] (1.5,1.5)
        (1.5,1.5) edge[below,thick] (1.5,3)
        (0,1.5) edge[below,thick] (1.5,1.5)
        (1.5,1.5) edge[below,thick] (3,1.5)
        ;
        \draw (1.5,1.5) node[above right]{\tiny ${(c,(1/2,1,2))}$};
        \draw (0.75,0.75) node{\tiny ${(c,(1/4,1,4))}$};
    \end{scope}
       
  \end{tikzpicture}
\end{center}
We can also define the \emph{underlying cube} $\pi_1(x,(t_1,\dots, t_m)):=x$ of $(x,(t_1,\dots, t_m))$. We give some useful properties of subdivisions.

\begin{restatable}{proposition}{lorderedsubdivisionrestate}
    \label{prop:locally_ordered_subdivision}
If \(s\) does not divide \(k\in\mathbb N\setminus\{0\}\), then for every element \(e\in\tfrac1sK\) of dimension \(n\geqslant k\), the set of lower \((n-k)\)-faces of \(e\) is disjoint from its set of upper \((n-k)\)-faces. 
\end{restatable}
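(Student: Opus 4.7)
The plan is to introduce a numerical invariant on $\tfrac{1}{s}K$ that is shifted by opposite amounts under lower and upper face operators, and to check that if $s\nmid k$ these opposite shifts separate lower from upper $(n-k)$-faces after $k$ steps. Specifically, I propose the invariant
\[
S\bigl(x,(t_1,\ldots,t_m)\bigr) \;:=\; \sum_{j=1}^{m} 2s\, t_j \;\pmod{2s} \;\in\; \mathbb{Z}/2s\mathbb{Z},
\]
which depends only on the tuple of fractions (not on $x$); each coordinate contributes an integer in $\{1,\ldots,2s-1\}$, odd at arrow positions and even at vertex positions.

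The core step is a pair of local lemmas: every $\delta^-_{i,n}$ shifts $S$ by $-1\pmod{2s}$, and every $\delta^+_{i,n}$ shifts $S$ by $+1\pmod{2s}$. Both are short case analyses on whether the affected coordinate $t_j$ lies at the boundary of its range. For $\delta^-$: in the generic case $t_j\ne\tfrac{1}{2s}$, the rescaled coordinate $2s t_j$ simply drops by one; in the boundary case $t_j=\tfrac{1}{2s}$, that rescaled coordinate equals $1$ and is deleted from the tuple, yielding the same change $-1$. The symmetric analysis for $\delta^+$ works because deleting the boundary value $2s-1$ is congruent to $+1$ modulo $2s$. The entire point of reducing modulo $2s$ is to absorb the asymmetry between `shift by one' and `delete an endpoint' into one uniform rule.

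Given these two lemmas, iterating $k$ times shows that any lower $(n-k)$-face of $e$ has $S$-value $S(e)-k\pmod{2s}$ while any upper $(n-k)$-face has $S$-value $S(e)+k\pmod{2s}$; the order in which the individual face operators are applied is irrelevant since each step contributes the same constant shift. Since two elements of $\tfrac{1}{s}K$ are equal only if their underlying tuples coincide (and hence their $S$-values), a coincidence between a lower and an upper $(n-k)$-face would force $2k\equiv 0\pmod{2s}$, i.e.\ $s\mid k$, contradicting the hypothesis. The only delicate point in this plan is the verification of the boundary cases of the face formulas, where coordinates disappear and get absorbed into a face of $x$; but once the invariant is set up modulo $2s$, those cases become essentially free.
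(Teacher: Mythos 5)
Your proof is correct and follows essentially the same route as the paper: both track the sum of the coordinates $t_j$ and observe that lower and upper face operators shift it in opposite directions, forcing $s\mid k$ if a lower and an upper $(n-k)$-face coincide. The only difference is cosmetic — by rescaling by $2s$ and reducing modulo $2s$ you absorb the coordinate-dropping cases into a uniform $\pm 1$ shift, whereas the paper computes the two sums exactly and must separately count the number $N$ of dropped coordinates; your packaging is slightly cleaner but the invariant and the conclusion are the same.
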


\noindent As an immediate consequence we have: 
\begin{corollary}\label{cor:locally_ordered_subdivision} 
If \(K\) is of finite dimension \(d\) and if \(s>d\), then for every element \(x\) of \(\tfrac1sK\), the set of lower faces of \(x\) is disjoint from its set of upper faces.
\end{corollary}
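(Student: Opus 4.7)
The approach will be to isolate a simple rational invariant which decreases in a controlled way under lower face operators and whose behavior under upper ones is pinned down up to an integer count; a divisibility obstruction will then separate the two sets of faces.

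For any $f=(y,(u_1,\ldots,u_p))\in\tfrac{1}{s}K$, set $S(f):=\sum_{j=1}^{p}u_j$. Since elements of $\tfrac{1}{s}K$ are literally ordered pairs, two of them are equal if and only if they have the same underlying cube \emph{and} the same tuple; in particular, equal elements share the same $S$-value, so $S(f)\neq S(f')$ is enough to conclude $f\neq f'$.

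I will then track how $S$ changes under a single face operator by inspecting the definitions given in the paper, and check two arithmetic facts: (a) $\delta^{-}$ always decreases $S$ by exactly $\tfrac{1}{2s}$ --- in the non-removal case the affected coordinate drops by $\tfrac{1}{2s}$, and in the removal case the removed coordinate equals $\tfrac{1}{2s}$, so the contribution it used to make to $S$ vanishes, again a change of $-\tfrac{1}{2s}$; (b) $\delta^{+}$ changes $S$ by $+\tfrac{1}{2s}$ in the non-removal case, and by $-\tfrac{2s-1}{2s}$ when a coordinate equal to $\tfrac{2s-1}{2s}$ is removed. Iterating from $e=(x,(t_1,\ldots,t_m))$, every lower $(n-k)$-face $f^{\ell}$ of $e$ then satisfies $S(f^{\ell})=S(e)-\tfrac{k}{2s}$, and every upper $(n-k)$-face $f^{u}$ satisfies $S(f^{u})=S(e)+\tfrac{k}{2s}-r$ for some integer $r\in\{0,\ldots,k\}$ recording the number of removals performed along the upper path producing $f^{u}$. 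If some lower face were to coincide with some upper face, equating $S$-values would force $\tfrac{k}{s}=r\in\mathbb{Z}$, contradicting the hypothesis $s\nmid k$.

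The only delicate point is the verification of (a): the critical observation is that removing a coordinate equal to $\tfrac{1}{2s}$ changes $S$ by the same amount as a decrement by $\tfrac{1}{2s}$, whereas removing one equal to $\tfrac{2s-1}{2s}$ decidedly does not match an increment. This single-step asymmetry between $\delta^{-}$ and $\delta^{+}$, compounded over $k$ steps, is exactly what turns the arithmetic hypothesis $s\nmid k$ into the claimed disjointness.
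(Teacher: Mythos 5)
Your argument is exactly the paper's: the corollary is there deduced from Proposition~\ref{prop:locally_ordered_subdivision}, whose proof uses precisely your invariant $S$ (denoted $\int t$ in the paper), the same per-step bookkeeping for $\delta^{-}$ versus $\delta^{+}$ (including the observation that dropping a coordinate equal to $\tfrac{1}{2s}$ changes the sum by the same amount as decrementing it), and the same final divisibility clash $k=rs$. The only step you leave implicit is the one-line reduction from the corollary's hypothesis to the divisibility condition you actually invoke: any proper face of $x$ has codimension $k$ with $1\leq k\leq\dim x\leq d<s$, so $s\nmid k$, and faces of distinct codimensions have distinct dimensions and hence cannot coincide.
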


\noindent With these tools in hand, we can prove the following, which justifies condition (iv) of \S\ref{sec:lorps}.

\begin{restatable}{lemma}{condrestatable}\label{cond4}
    Let $P$ be a precubical set of bounded dimension. Then for every vertex $v$ of $P$, there is an ordered neighborhood $B$ of $v$ only intersecting cubes adjacent to $v$, and for every cube $c$ with minimum (respectively maximum) $v$, for every point $x\in\{c\}\times]0,1[^{\dim(c)}$, every point $y\geq x$ (respectively $y\leq x$) in $B$ belongs to $\{c'\}\times]0,1[^{\dim(c')}$ for some cube $c'$ having $c$ as lower (respectively upper) face.
\end{restatable}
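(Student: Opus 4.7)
The plan is to take $B$ to be the \emph{open star} of the vertex $(v,())$ in the subdivision $\tfrac{1}{s}P$, for a subdivision level $s$ strictly larger than $d := \dim(P)$. Concretely, $B$ is the union of the open interiors (in $|P|$) of all subdivision cubes $D$ of $\tfrac{1}{s}P$ having $(v,())$ as a face; this is an open neighborhood of $v$, which I endow with the partial order generated by cubewise-increasing paths inside $B$. Condition (ii) together with Corollary~\ref{cor:locally_ordered_subdivision} (which for $s > d$ forces the lower and upper faces of every subdivision cube of $\tfrac{1}{s}P$ to be disjoint) ensure that this is a bona fide partial order, so $B$ is indeed an ordered neighborhood of $v$ in $|P|_{lo}$.

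The first step is to identify the subdivision cubes appearing in $B$. Unwinding the face-operator formulas of \S\ref{subdiv} shows that a cube $D = (c,(t_1,\ldots,t_m))$ of $\tfrac{1}{s}P$ has $(v,())$ as a face iff $v$ is a corner of $c$ and each $t_j \in \{\tfrac{1}{2s},\tfrac{2s-1}{2s}\}$, the choice $\tfrac{1}{2s}$ versus $\tfrac{2s-1}{2s}$ in direction $j$ recording the sign of $v$ as the corresponding corner of $c$. In particular, when $v = \min(c)$, the unique such $D$ is $D(c,v) := (c,(\tfrac{1}{2s},\ldots,\tfrac{1}{2s}))$, whose interior in $|P|$ is $\{c\} \times\, ]0,\tfrac{1}{s}[^{\dim c}$. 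Since every such $D$ lies inside the interior of a $P$-cube adjacent to $v$, the first half of the statement (that $B$ intersects only cubes adjacent to $v$) follows immediately.

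For the forward-cone part, fix $c$ with $v = \min(c)$, pick $x \in \{c\}\times\, ]0,1[^{\dim c}$ in $B$ and $y \geq x$ in $B$, and take a cubewise-increasing path $\gamma:[0,1]\to B$ from $x$ to $y$. The previous paragraph places $x$ in $\mathrm{int}(D(c,v))$, so the $c$-coordinates of $x$ all lie in $]0,\tfrac{1}{s}[$. I then follow the sequence $c = c_0, c_1, \ldots, c_k$ of $P$-cubes successively visited by $\gamma$ and prove by induction on $i$ the invariants $v = \min(c_i)$ and that $c_{i-1}$ is a pure lower face of $c_i$. Whenever $\gamma$ jumps from $\mathrm{int}(c_i)$ to $\mathrm{int}(c_{i+1})$, continuity in the topology of $|P|_{lo}$ forces $c_i$ to be a face of $c_{i+1}$; if this face were upper or mixed, some coordinate of $c_{i+1}$ would have to approach $1$ at the transition and then by non-decreasingness stay $\geq 1$ afterwards, contradicting $\gamma \subset \mathrm{int}(c_{i+1})$. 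Hence $c_i$ is a pure lower face of $c_{i+1}$, and consequently $v = \min(c_i)$ propagates to $v = \min(c_{i+1})$. Potential transit cubes $c_\ast$ traversed by $\gamma$ at a single instant are excluded similarly: $c_\ast$ would have to be adjacent to $v$ (to lie in $B$) and an upper face of $c_i$ (by cubewise-increasingness on the left), but no proper upper face of $c_i$ contains $v$ when $v = \min(c_i)$. By transitivity of the lower-face relation, $c$ is a lower face of $c_k$ and $y \in \{c_k\}\times\, ]0,1[^{\dim c_k}$, as required; the maximum case is symmetric.

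The main obstacle I anticipate is making this transition analysis airtight within the full topology of $|P|_{lo}$, where continuous paths can switch the ambient $P$-cube via subtle sequences of face-identifications. The hypothesis $s > d$ is exactly what is needed here: Corollary~\ref{cor:locally_ordered_subdivision} then cleanly distinguishes lower from upper faces in each subdivision cube of $B$, which is precisely what prevents the chain $(c_i)$ from acquiring sideways or downward transitions that would push $y$ outside the cubes having $c$ as a lower face.
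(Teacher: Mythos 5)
Your proof follows essentially the same route as the paper's: subdivide with $s>\dim P$ so that Corollary~\ref{cor:locally_ordered_subdivision} separates the lower faces of every cube from its upper faces, take the star of $v$ in the subdivision (which coincides with the elementary neighborhood $W_{1/s}(v)$ of Definition~\ref{def:wepsilon} used in the paper), and show that a cubewise increasing path starting in a cube with minimum $v$ can only move up in dimension, into cubes having the current one as a lower face. The one divergence is that the paper obtains the partial order by shrinking to an ordered base element $U\subseteq W_\epsilon(v)$, so that condition (ii) of \S\ref{sec:lorps} applies, whereas you endow the star itself with the cubewise-increasing-path preorder and justify antisymmetry by condition (ii) — which strictly speaking only governs elements of the given ordered base, not arbitrary open sets; this is repaired in one line by restricting to such a $U$ inside your star, exactly as the paper does.
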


\noindent These properties are inherited by subset, so this is enough to prove that we can suppose that all the elements of the base have this property. The lemma should still be true without supposing that the precubical set is bounded in dimension, by using an explicit description of the ordered base of its locally ordered realization (which is complicated), but we only study bounded precubical sets in this paper. 

\begin{definition}
    Let $P$ be a precubical set, $v$ a vertex of $P$. 
    \begin{enumerate}
    \item The \emph{combinatorial neighborhood} $N(v)$ of $v$ is the set of cubes of $P$ which are adjacent to $v$.
    \item For a cube $c\in N(v)$, $v$ is \emph{only on one of the corners} of $c$ if the evaluation map from $0$-face operators to vertices of $c$ does not send two different elements to $v$.
    \item For a natural number $k$, $v$ is \emph{$k$-simple} if for every $k$-cube $c$ adjacent to $v$, $v$ is only on one of the corners of $c$.
    \end{enumerate}
\end{definition}

\begin{restatable}{lemma}{Wconnectedrestatable}\label{W_connected}
    Let $P$ be a precubical set of dimension $n$, $v$ a vertex of $P$ seen as a vertex of $\frac{1}{s}P$ for $s>1$. Then $v$ is $n$-simple.
\end{restatable}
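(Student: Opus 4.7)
The plan is to show that any $n$-cube $c$ of $\tfrac{1}{s}P$ admits at most one sign sequence $(\epsilon_1, \ldots, \epsilon_n) \in \{-, +\}^n$ whose associated iterated $0$-face operator sends $c$ to $v$. To that end I would unravel the combinatorial description of $\tfrac{1}{s}P$ recalled at the start of \S\ref{subdiv}: write $c = (x, (t_1, \ldots, t_m))$ with $\dim x = m$, so that the dimension $n$ of $c$ is the number of indices $j$ for which $t_j$ is an \emph{arrow value}, i.e.~belongs to $\{\tfrac{1}{2s}, \tfrac{3}{2s}, \ldots, \tfrac{2s-1}{2s}\}$, while the remaining $m - n$ entries are \emph{vertex values} in $\{\tfrac{1}{s}, \ldots, \tfrac{s-1}{s}\}$. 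The explicit formulas for $\delta^{\pm}$ show that face operators only act at arrow positions: applying $\delta^\pm$ at an arrow entry $t_j$ either shifts $t_j$ by $\pm\tfrac{1}{2s}$, turning it into a vertex value while preserving the length of the tuple, or, in the extremal case $t_j = \tfrac{1}{2s}$ (resp.~$t_j = \tfrac{2s-1}{2s}$), deletes $t_j$ and simultaneously applies $\delta^-_{j,m}$ (resp.~$\delta^+_{j,m}$) to $x$. Vertex positions are inert.

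Viewed in $\tfrac{1}{s}P$, the vertex $v$ of $P$ is the pair $(v, ())$ with empty tuple. For an iterated $0$-face of $c$ to equal $(v, ())$, the entire tuple $(t_1, \ldots, t_m)$ must be emptied during the reduction. Since only arrow positions can be deleted, this forces $m = n$ with every $t_j$ an arrow value; moreover, each deletion further demands that either $\epsilon_j = -$ with $t_j = \tfrac{1}{2s}$, or $\epsilon_j = +$ with $t_j = \tfrac{2s-1}{2s}$. Thus $\epsilon_j$ is determined by $t_j$ as soon as $\tfrac{1}{2s} \neq \tfrac{2s-1}{2s}$, which is precisely the hypothesis $s > 1$. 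So the sign sequence reducing $c$ to any vertex of $P$, and in particular to $v$, is uniquely determined by $c$.

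The only subtle point is to interpret "$v$ is on only one corner" correctly: the cocubical relations ensure that applying the $n$ face operators in any order yields the same $0$-face, so a corner is unambiguously indexed by its sign sequence, and the lemma amounts to injectivity, over $v$, of the evaluation map $\{-,+\}^n \to (\tfrac{1}{s}P)_0$. With this in place the forcing argument above gives the result immediately, and no serious obstacle arises.
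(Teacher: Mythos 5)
Your proof is correct and follows essentially the same route as the paper's: both arguments reduce to the observation that an iterated $0$-face of an $n$-cube $(c,(t_1,\ldots,t_n))$ of $\tfrac1sP$ can only land on a vertex $(v,())$ of $P$ if every entry is extremal and is deleted with the sign forced by whether $t_j=\tfrac1{2s}$ or $t_j=\tfrac{2s-1}{2s}$, which are distinct precisely because $s>1$. Your version is simply more detailed than the paper's two-line argument (in particular in making explicit that shifted entries become inert vertex values and that the cocubical relations make the corner depend only on the sign sequence), but there is no substantive difference.
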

    
\subsection{Combinatorial blowup}\label{sec:comb}
We turn to the proof of the generalization of the Theorem of invariance of domain. First we need some definitions.

\begin{definition}
    Let $P$ be a precubical set, $Z$ a subset of its geometric realization $Z\subseteq|P|$. The intersection of $P$ and $Z$, noted $P\cap Z$, is the precubical set spanned by all the cubes of $Z$ (which we recall are the cubes $c$ such that $\{c\}\times]0,1[^{\dim(c)}\cap Z\ne \varnothing$).
\end{definition}

Recall from~\cite{grandis2003cubical} that every face map $P(n+k)\to P(n)$ decomposes uniquely as 
\[\delta^{\epsilon_1}_{n+1,i_1}\circ\delta^{\epsilon_2}_{n+2,i_2}\circ\ldots\circ\delta^{\epsilon_k}_{n+k,i_k}\]
with $0\leq i_1<i_2<\ldots<i_k<n+k$. This allows us to define the following.

\begin{definition}\label{def:wepsilon}
    Let $P$ be a precubical set, $v$ be a vertex. For every $c\in P_n$ having $v$ as vertex, define the \emph{position} $p(v,c)$ of $v$ with respect to $c$ as the set of all tuples of the form $(p_1,\dots,p_n)$ with $p_i\in\{0,1\}$ such that $\delta_{1,0}^{p_1}\circ\dots\circ\delta_{n,n-1}^{p_n}(c)=v$. For $\epsilon\in]0,1[$, $p\in p(v,c)$, define 
    \[B(p,\epsilon):=\{x\in]0,1[^n\,|\,d_\infty(p,x)<\epsilon\}\]
    with $d_\infty$ the distance inherited from the infinite norm on $\R^n$. Finally, define the \emph{$\epsilon$-elementary open neighborhood of $v$} $W_\epsilon(v)$ as the subspace of the realization of $|P|$ given by 
    \[W_\epsilon(v):=\{v\}\cup \bigcup_{c\in N(v),p\in p(v,c)}\{c\}\times B(p,\epsilon)\]
\end{definition}

\noindent For example, we have highlighted $W_{1/2}(v)$ for the vertex $v$ of the following precubical set:
\vspace{1ex}
\begin{center}
    \begin{tikzpicture}
      \fill[mygray] (0,0) rectangle (2,2);
      \fill[gray, opacity=0.5] (1,0) rectangle (2,1);
      \filldraw (0,0) circle (0.05);
      \filldraw (2,0) circle (0.05);
      \filldraw (0,2) circle (0.05);
      \filldraw (2,2) circle (0.05);
      \filldraw (4,0) circle (0.05);
      \draw (2,0) node[below] {$v$};
      \draw
      (0,0) edge[below] (2,0)
      (2,0) edge[below] (2,2)
      (0,0) edge[below] (0,2)
      (0,2) edge[below] (2,2)
      (2,0) edge[below] (4,0)
      ;
      \draw[gray, line width=1.5mm,opacity=0.5] (2,0) -- (3,0);
      \draw[gray, line width=1.5mm,opacity=0.5] (2,0) -- (2,1);
      \draw[gray, line width=1.5mm,opacity=0.5] (1,0) -- (2,0);
    \end{tikzpicture}
\end{center}
Note that, with the same notations, if $v$ is $n$-simple then $|p(v,c)|\leq 1$ for every $n$-cube $c$ of $P$.

\begin{remark}\label{rem:wepsilon}
    With the same notations, note that $W_\epsilon(v)$ is indeed open in $|P|$. In fact, if $v$ is adjacent to a finite number of cubes, every open neighborhood of $v$ contains some $W_\epsilon(v)$ for a small enough $\epsilon$. More generally, for $x\in\{c\}\times]0,1[^{\dim(c)}$, a similar construction gives an open neighborhood of $x$ which only intersects cubes of higher dimension adjacent to $c$.
\end{remark}

\begin{lemma}[Invariance of domain]\label{invariance_domain}
    Let $P$ be a precubical set of dimension $n$, $v$ an $n$-simple vertex of $P$, $U$ an open neighborhood of $0$ in $\R^n$, $f:U\xrightarrow{}|P|$ an injective continuous map such that $f(0)=v$. Then there is a neighborhood $V\subseteq U$ of $0$ such that $f(V)$ is open in $|P\cap f(V)|$.
\end{lemma}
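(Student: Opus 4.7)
The idea is to reduce, via continuity, to an analysis taking place entirely inside an elementary neighborhood $W_\epsilon(v)$, and then to bootstrap the classical invariance of domain theorem~\cite[Theorem~2B.3]{hatcher2005algebraic} cube by cube.

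First I would shrink $V$ to an open ball around $0$ with compact closure in $U$ such that $f(V) \subseteq W_\epsilon(v)$ for some $\epsilon > 0$; this is possible by continuity of $f$ and by Remark~\ref{rem:wepsilon}, and I would additionally take $\epsilon$ small enough that $v$ is the only vertex of $P$ inside $W_\epsilon(v)$ (legitimate since $f(\overline V)$ is compact and so meets only finitely many cubes). Compactness also makes $f|_{\overline V}$ a continuous injection from a compact space into the Hausdorff space $|P|$, hence a topological embedding. For each cube $c$ of $P$ set $V_c := f^{-1}(\{c\} \times {]0,1[}^{\dim c})$; by injectivity and the choice of $\epsilon$ one obtains the disjoint set-theoretic decomposition $V = \{0\} \sqcup \bigsqcup_{c \in N(v),\, \dim c \geq 1} V_c$, with only finitely many $V_c$ nonempty. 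When $c$ is $n$-dimensional it is top-dimensional in $P$, so $\{c\} \times {]0,1[}^n$ is open in $|P|$ and the corresponding $V_c$ is open in $V$. Applying classical invariance of domain to $f|_{V_c} : V_c \to \{c\} \times {]0,1[}^n \cong \R^n$ then gives that $f(V_c)$ is open in $|P|$, so a fortiori open in $|P \cap f(V)|$; this already handles openness of $f(V)$ at every point of every top-dimensional $f(V_c)$.

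The delicate case is openness at $v$ itself (and, by a similar scheme, at points lying on lower-dimensional faces). By $n$-simplicity, each $n$-cube $c \in N(v)$ has a unique corner $p_c$ at $v$, so a basic open neighborhood of $v$ in $|P \cap f(V)|$ has the shape $W_{\epsilon'}(v) \cap |P \cap f(V)| = \{v\} \cup \bigcup_c \{c\} \times B(p_c, \epsilon')$, the union being over the finitely many $n$-cubes $c$ with $V_c \neq \emptyset$. The lemma therefore reduces to the \emph{corner-filling claim}: there is a uniform $\epsilon' > 0$ such that, for every such $c$, $f(V_c) \supseteq \{c\} \times (B(p_c, \epsilon') \cap {]0,1[}^n)$. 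To prove this claim I would extend $f|_{V_c \cup \{0\}}$ to a continuous injection on a full open neighborhood of $0$ in $\R^n$ by unfolding the cube $c$ across the corner $p_c$ --- a reflection whose ambiguities are resolved precisely by $n$-simplicity, since there is then a unique corner to reflect about --- and then apply classical invariance of domain to this extension to conclude that $f(V_c)$ fills a genuine corner neighborhood of $p_c$.

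The main obstacle is exactly this corner-filling step. Classical invariance of domain applied directly to $V_c$ only guarantees $f(V_c)$ is open \emph{in the cell interior} $\{c\} \times {]0,1[}^n$; a priori its closure could approach $p_c$ in a pathological way. The unfolding trick upgrades this openness to a corner neighborhood, and the whole use of the hypothesis that $v$ is $n$-simple enters at exactly this point: without it the reflection would be multi-valued and the argument would collapse.
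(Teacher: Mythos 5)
Your setup — restricting into an elementary neighborhood $W_\epsilon(v)$, decomposing $V$ into the preimages $V_c$ of the open cells, and applying classical invariance of domain to $f|_{V_c}$ for top-dimensional $c$ — matches the paper's strategy and is sound. But the corner-filling step, which you rightly identify as the crux, is not actually proved. The proposed ``unfolding/reflection'' does not yield a map to which classical invariance of domain applies: $V_c\cup\{0\}$ is not open in $\R^n$ (near $0$ its complement consists of the other sectors $V_{c'}$ and of lower-dimensional strata), so an extension of $f|_{V_c\cup\{0\}}$ to a genuine open neighborhood of $0$ would have to be defined on points that $f$ sends into \emph{other} cells of $|P|$; reflecting the target cube across the corner $p_c$ gives no recipe for where to send those points, and the injectivity of any such ad hoc extension is a further unaddressed requirement. (Note also that $]0,1[^n\cup\{p_c\}$ is not locally Euclidean at $p_c$, so there is no homeomorphic ``unfolded chart'' into which the problem can be transported.) The paper closes exactly this gap by a different mechanism: take a closed ball $B\subseteq U$, set $\eta:=\min_{y\in f(\partial B)}d_\infty(v,y)>0$ (positive by injectivity and compactness), $W:=W_{\eta/2}(v)$ and $V:=\mathring{B}\cap f^{-1}(W)$; then for each $n$-cube $c$ the set $\{c\}\times]0,1[^n\cap W\cap f(B)$ is open (classical invariance of domain), closed (compactness of $f(B)$ together with $W\cap f(\partial B)=\varnothing$), and nonempty inside the \emph{connected} set $\{c\}\times]0,1[^n\cap W$ — connectedness being precisely where $n$-simplicity enters — hence equal to all of it. You record that $f|_{\overline V}$ is a topological embedding of a compact set, which is the right ingredient, but you never use it to obtain the closedness that makes the clopen argument run; the reflection idea cannot substitute for it.

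There is a second, smaller gap: a basic neighborhood of $v$ in $|P\cap f(V)|$ contains cells of \emph{all} dimensions adjacent to $v$, not only the $n$-cells, so even granting corner-filling for every top cell you must still show that the lower-dimensional strata of $W_{\epsilon'}(v)\cap|P\cap f(V)|$ lie in $f(V)$; ``by a similar scheme'' does not cover this, since those strata are not open in $|P|$ and invariance of domain cannot be applied to them directly. The paper handles this by first proving that $P\cap f(V)$ is $n$-homogeneous (again via classical invariance of domain, ruling out a continuous injection of an open subset of $\R^n$ into a $k$-cell with $k<n$) and then passing to closures of the filled top cells, so that $|P\cap f(V)|\cap W\subseteq\overline{f(B)}=f(B)$ and finally $\subseteq f(V)$. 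You would need an analogue of both steps to complete your argument.
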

\begin{proof}
    Consider an elementary neighborhood $W_\epsilon(v)$ of $v$ in $|P|$. Note that the function $d_\infty(v,-)$ is well-defined and continuous on $W_\epsilon(v)$. Since $f$ is continuous, up to restriction we can suppose that $f(U)\subseteq W_\epsilon(v)$. We can also suppose without loss of generality that the (closed) unit ball (for the infinite norm) $B$ of $\R^n$ is included in $U$. Now $B$ and $S:=S^{n+1}\subseteq B$ are compact, so $f(B)$ and $f(S)$ are compact (and in particular closed because $W_\epsilon(v)$ is Hausdorff). So $d_\infty(v,-)_{|f(S)}$ is a continuous function defined on a compact which takes values in $\R$, so it is bounded and reaches its bounds. Note that the minimum of this function cannot be $0$, because otherwise $v=f(0)\in f(S)$ so $0\in S$ because $f$ is injective. Let $\eta>0$ be this minimum. We note $W:=W_{\eta/2}(v)$ and $V:=\mathring{B}\cap f^{-1}(W)$. 
    \begin{claim}\label{claim}
    $f(V)=|P\cap f(V)|\cap W$.
    \end{claim}
    \noindent The proof of the claim is topological routine and can be found in Appendix~\ref{sec:proof-invariance}. This concludes the proof of the lemma.
\end{proof}

\noindent For example, the embeddings in Fig.~\ref{fig:G} are not open, because an open neighborhood of the central vertex must intersect all the adjacent edges (recall Remark~\ref{rem:wepsilon}). But they become open after restriction to a subgraph (i.e.~after `removing' the edges which are not in the image). Note also that it is necessary to suppose that $v$ is simple. Indeed, consider the grey embedding on the left:
\begin{center}
    \begin{tikzpicture}
    \draw (-1,0)--(1,0);
    \draw (0,0.5) circle (0.5);
    \fill (0,0) circle (0.05);
    \draw[line width=1.5mm,opacity=0.3,line cap=round] (0,0.5) ++(270:0.5cm) arc (270:360:0.5cm);
    \draw[line width=1.5mm,opacity=0.3,line cap=round] (-0.5,0)--(0,0);

    \begin{scope}[xshift=35mm]
        \draw (-1,0)--(1,0);
    \draw (0,0.5) circle (0.5);
    \fill (0,0) circle (0.05);
    \fill (0,1) circle (0.05);
    \draw[line width=1.5mm,opacity=0.3,line cap=round] (0,0.5) ++(270:0.5cm) arc (270:360:0.5cm);
    \draw[line width=1.5mm,opacity=0.3,line cap=round] (-0.5,0)--(0,0);
    \end{scope}
\end{tikzpicture}
\end{center}
It is not open since an open neighborhood of the central vertex should intersect both ends of the loop, and it cannot be made open by restricting to a subgraph, precisely because there is an edge with same source and target. However, by Lemma~\ref{W_connected}, we can always deal with this problem by (restricting the domain and) subdividing the codomain as on the right: now we get an open embedding by removing one edge (the left one) of the loop.

On the other hand, we can characterize the `combinatorial embeddings' of $\R^n$ in some precubical set. First we give a rigorous definition capturing this intuition.

\begin{definition}\label{canonical}
    \emph{A local precubical structure (lps) of $\R^n$} is a pair $(P,v)$ where $P$ is a precubical set and $v$ a vertex of $P$, such that there exists an open neighborhood $W$ of $v$ in $|P|_{lo}$ included in an ordered neighborhood $U$ of $v$, and such that $W$ with the induced order is dihomeomorphic to $\R^n$ by a dihomeomorphism sending $0$ to $v$. If $(P,v)$ is a local precubical structure of $\R^n$, we also say that the precubical set $P$ is \emph{locally euclidean} at $v$.
\end{definition}

\noindent It turns out that we can give a purely combinatorial characterization of local precubical structures of $\R^n$. Note that the $n$-fold tensor product $(\cdot\xrightarrow{}\cdot\xrightarrow{}\cdot)^{\otimes n}$ with its central vertex is \textit{a priori} a good candidate to represent a combinatorial analog of the pointed space $(\R^n,0)$, modulo boundary issues which can be dealt with by subdivision. This is essentially the content of the next theorem, which is proved in Appendix~\ref{sec:proof-local}.

\begin{theorem}\label{explicit_blowup}
    Let $P$ be a precubical set, $v$ a vertex. The following are equivalent:
    \begin{enumerate}
        \item $(P,v)$ is a local precubical structure of $\R^n$;
        \item There is a precubical set $B\cong (\cdot\xrightarrow{}\cdot\xrightarrow{}\cdot)^{\otimes n}$ and a monomorphism 
        \[B\hookrightarrow \frac{1}{3}P\]
        sending the central vertex to $v$, and whose image is the precubical set generated by $N(v)$, with $v$ seen as a vertex of $\frac{1}{3}P$.
    \end{enumerate}
\end{theorem}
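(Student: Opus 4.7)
The plan is to prove both implications, with $(2)\Rightarrow(1)$ a short matter of unfolding definitions and $(1)\Rightarrow(2)$ the substantive part, combining Lemma~\ref{W_connected}, Lemma~\ref{invariance_domain}, and a combinatorial analysis of $N(v)$ in the $\tfrac{1}{3}$-subdivision.

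For $(2)\Rightarrow(1)$, $|(\cdot\to\cdot\to\cdot)^{\otimes n}|_{lo}$ is dihomeomorphic as a locally ordered space to $[0,1]^n$ with the product order, so its central vertex has an ordered open neighborhood dihomeomorphic to $\R^n$. The monomorphism $B\hookrightarrow \tfrac{1}{3}P$ is an isomorphism of precubical sets onto the subcomplex generated by $N(v)$, inducing an inclusion of locally ordered realizations. By property (iv) of \S\ref{sec:lorps} applied to $v$ in $\tfrac{1}{3}P$, every sufficiently small ordered neighborhood of $v$ meets only cubes of $N(v)$, so for $\epsilon$ small the elementary neighborhood $W_\epsilon(v)$ of Definition~\ref{def:wepsilon} is contained in the realization of the image of $B$ (Remark~\ref{rem:wepsilon}), and is dihomeomorphic to the corresponding elementary neighborhood of the center of $|B|_{lo}$, hence to $\R^n$. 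Since $|P|_{lo}$ and $|\tfrac{1}{3}P|_{lo}$ coincide as locally ordered spaces, this establishes (1).

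For $(1)\Rightarrow(2)$, fix a dihomeomorphism $\phi\colon \R^n \xrightarrow{\sim} W$ sending $0$ to $v$, with $W\subseteq U$ and $U$ an ordered neighborhood of $v$. Pass to $\tfrac{1}{3}P$, where Lemma~\ref{W_connected} makes $v$ an $n$-simple vertex, and apply property (iv) to shrink $U$ (and correspondingly $W$ and the domain of $\phi$) so that $U$ only meets cubes of $N(v)$. Apply Lemma~\ref{invariance_domain} to the continuous injection $\phi$: there is an open neighborhood $V$ of $0$ in $\R^n$ such that $\phi(V)$ is open in $|\tfrac{1}{3}P\cap\phi(V)|$, so $\phi|_V$ is a dihomeomorphism of $V\cong\R^n$ onto an open subset of the realization of the subcomplex generated by the cubes of $N(v)$ meeting $\phi(V)$. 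It remains to identify this subcomplex, and hence the precubical set generated by $N(v)$, as $(\cdot\to\cdot\to\cdot)^{\otimes n}$. For each $n$-cube $c\in N(v)$, $n$-simplicity furnishes a unique sign pattern $p(v,c)\in\{0,1\}^n$; the open interior of $c$ is locally $\R^n$, so its preimage under $\phi$ is an open subset of $\R^n\setminus\{0\}$, and a direction-by-direction check (using that $\phi$ and $\phi^{-1}$ are order-preserving, hence respect each coordinate's sign dichotomy near $0$) forces this preimage to lie in the open octant of $\R^n$ indexed by $p(v,c)$. This makes $c\mapsto p(v,c)$ injective; it is surjective because every open octant around $0\in\R^n$ must be entirely covered by $\phi^{-1}$ of some $n$-cube's interior, interiors of cubes of dimension $<n$ failing to be locally $\R^n$. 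Thus $N(v)$ contains exactly $2^n$ $n$-cubes, one per sign pattern, and the shared faces between them are forced by the coordinate hyperplane structure of $\R^n$ to match those at the center of $(\cdot\to\cdot\to\cdot)^{\otimes n}$, yielding the required isomorphism and the desired monomorphism (up to symmetric precubical sets, as allowed by \S\ref{sec:precubical_sets}).

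The main obstacle is the final combinatorial step: rigorously establishing that $c\mapsto p(v,c)$ is a bijection and that all face incidences between the $n$-cubes of $N(v)$ are exactly those of $(\cdot\to\cdot\to\cdot)^{\otimes n}$. Without the passage to $\tfrac{1}{3}P$ and the $n$-simplicity granted by Lemma~\ref{W_connected}, a single $n$-cube could touch $v$ at two distinct corners, and the sign-pattern argument would collapse; this is precisely why $\tfrac{1}{3}P$, rather than $P$ itself, appears in the statement.
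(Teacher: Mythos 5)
Your direction $(2)\Rightarrow(1)$ and your scaffolding for $(1)\Rightarrow(2)$ (subdivide so that Lemma~\ref{W_connected} makes $v$ $n$-simple, shrink via condition (iv), apply Lemma~\ref{invariance_domain}) agree with the paper. The gap is exactly where you flag it, and your sketch of how to close it does not work. Order-preservation of $\phi$ does place the interior of every $n$-cube having $v$ as minimum inside the positive octant, but it does not prevent \emph{two distinct} such cubes from sharing that octant, so ``this makes $c\mapsto p(v,c)$ injective'' does not follow; and your surjectivity argument (``every open octant is entirely covered by $\phi^{-1}$ of some $n$-cube's interior, lower-dimensional interiors failing to be locally $\R^n$'') silently assumes that a \emph{single} cube covers the octant. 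That assumption cannot be justified topologically for $n\geq 2$: two $n$-cubes glued along a common $(n-1)$-face, both with $v$ as minimum, form an ``open book'' that is locally homeomorphic to $\R^n$ away from $v$, so neither connectivity nor invariance of domain excludes this configuration — consistent with Remark~\ref{example}(1), which notes the theorem is false in the undirected setting. Your ``direction-by-direction sign check'' also presupposes a matching between the $n$ coordinate axes of $\R^n$ and combinatorial directions at $v$, which is itself part of what must be proved.

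The paper closes these gaps with order-theoretic arguments that your proposal never invokes. Uniqueness of the $n$-cube with $v$ as minimum (resp.\ maximum) follows because any two points of the positive cone of $\R^n$ admit a common upper bound, whereas by condition (iv) of \S\ref{sec:lorps} the up-set of a point in the interior of a top-dimensional cube stays inside that interior, so points in two distinct positive $n$-cubes would have no common upper bound in $W$ — contradicting that $\phi$ reflects upper bounds. The identification of the edges of $c_+$ with the positive axes uses the order-invariant property that the strict down-set of such a point is dihomeomorphic to an interval; the non-existence of a $2$-face between the edges labelled $i$ and $-i$ uses that the only directed segment between two points of the same axis of $\R^n$ lies on that axis; and the existence and uniqueness of the $k$-cube realizing each admissible tuple of labels again uses order convexity and common upper bounds inside the corresponding cone. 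Each of these is a separate argument you would need to supply; none follows from the octant-containment and topological facts you establish.
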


\begin{remark}\label{example}
\begin{enumerate}
    \item The proof of the last theorem heavily relies on the order structure, which is reasonable since the result is absolutely not true in the non-directed case.
    \item In the previous theorem, if $P$ is already a precubical subset of $\frac{1}{3}Q$ for some precubical set $Q$, there is no need to take the $3$-subdivision again, just because the proof still works.
    \item Let $R$ be the countable directed linear graph without endpoints, i.e.~$R_0=\Z$, $R_1=\Z$, $s=\id$, $t=(x\mapsto x+1)$. Then for any vertex $v$ of $R^{\otimes n}$, $(R^{\otimes n},v)$ is a local precubical structure of $\R^n$. In fact, this is the motivating example.
\end{enumerate}
\end{remark}

\noindent Now we can encapsulate the blowup of the locally ordered realization of a precubical set $P$ in a purely combinatorial presheaf. This is partly inspired by cellular sheaves~\cite{curry2014sheaves,ghrist2014elementary}. For every $c\in P_k$, we write $m(c)$ for the vertex of $\frac{1}{2}P$ which is the `center' of $c$ (with the notations of \S\ref{subdiv}, $m(c)=(c,(1/2,\dots,1/2))$). This allows to `represent' each cube of $P$ by a vertex. Recall~\cite[\S 12.2]{barr1990category} that the category of elements $\int P$ of a precubical set $P$ is the category whose objects are pairs $(k,c)$ with $k\in\N$, $c\in P_k$, and morphisms are of the form $f:(k',P(f)(c))\to(k,c)$ with $f:k'\to k$ a morphism of $\square$, i.e.~the objects are the cubes of $P$ and the morphisms are the face inclusions.

\begin{proposition}\label{prop:comb}
    Let $P$ be a precubical set. There is a presheaf $\mathsf{Comb}_P$ on $(\int P)^{op}$ satisfying for every $c\in P_k$, 
    \begin{align*}
        \mathsf{Comb}_P(k,c)=\{ & Q\subseteq \frac{1}{6}P\,|\,\text{$(Q,m(c))$ is an lps of $\R^n$} \\
        & \text{and }Q \cong (\cdot\xrightarrow{}\cdot\xrightarrow{}\cdot)^{\otimes n}\}\sqcup\{\bot^{(k,c)}\}
    \end{align*}
    where `$\subseteq$' means `subobject' and $\bot^{(k,c)}$ is a new element (noted $\bot$ when $(k,c)$ is clear from the context) satisfying $\mathsf{Comb}_P(f)(\bot)=\bot$ for any morphism $f$ of $\int P$.
\end{proposition}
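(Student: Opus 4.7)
The plan is to construct a specific covariant functor $\mathsf{Comb}_P:\int P\to\Set$---which, by the paper's convention that a presheaf on a category $\ccat$ is a functor $\ccat^{op}\to\Set$, is what ``presheaf on $(\int P)^{op}$'' designates---whose values on objects are prescribed by the statement. The real content is defining the action on morphisms so that functoriality holds. The principal tool is Theorem~\ref{explicit_blowup} applied to $\frac{1}{2}P$: it identifies the non-$\bot$ elements of $\mathsf{Comb}_P(k,c)$ with monomorphisms $(\cdot\to\cdot\to\cdot)^{\otimes n}\hookrightarrow\frac{1}{6}P=\frac{1}{3}(\frac{1}{2}P)$ sending the central vertex to $m(c)$, i.e.\ with local precubical structures of $\R^n$ at $m(c)$.

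For a morphism $f:(k',c')\to(k,c)$ in $\int P$---which, as recalled in the excerpt, encodes a face inclusion $c'\hookrightarrow c$ via $f:k'\to k$ in $\square$---I define $\mathsf{Comb}_P(f)$ by setting $\mathsf{Comb}_P(f)(\bot)=\bot$ and, for a non-$\bot$ element $Q\in\mathsf{Comb}_P(k',c')$ (which represents an lps at the face-midpoint $m(c')$), by ``transporting $Q$ along the face inclusion'' to produce a candidate lps at the interior midpoint $m(c)$. Concretely, $Q$ carries $n$ principal direction-classes at $m(c')$; the face inclusion $f$ singles out, among these, those ``tangent to $c$ through $c'$'' as opposed to the ``transverse'' ones. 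The candidate $Q'$ is obtained by retaining the tangent direction-classes, re-centering them at $m(c)$ inside $\frac{1}{6}P$, and completing them (if possible) to the unique compatible monomorphism $(\cdot\to\cdot\to\cdot)^{\otimes n}\hookrightarrow\frac{1}{6}P$ whose image is the precubical set generated by $N(m(c))$. When no such $Q'$ exists or fails to satisfy the conditions of Theorem~\ref{explicit_blowup}, set $\mathsf{Comb}_P(f)(Q)=\bot$.

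Functoriality amounts to the compatibility of this transport with composition of face inclusions: for composable $f_1:(k',c')\to(k,c)$ and $f_2:(k,c)\to(k'',c'')$ in $\int P$, both $\mathsf{Comb}_P(f_2\circ f_1)$ and $\mathsf{Comb}_P(f_2)\circ\mathsf{Comb}_P(f_1)$ arise from the same tangential projection along the composite inclusion $c'\hookrightarrow c''$; the $\bot$ behaviour propagates by construction. The main obstacle is to make ``transport along a face inclusion'' combinatorially rigorous in all dimensions and to prove uniqueness of $Q'$ whenever it exists. This requires the subdivision machinery of Section~\ref{subdiv}---in particular the cocubical relations governing iterated face maps and Lemma~\ref{W_connected}---together with the combinatorial characterization of local precubical structures from Theorem~\ref{explicit_blowup}; this is precisely why the factor-of-$6$ subdivision appears in the definition of $\mathsf{Comb}_P$, providing enough room both for the $\frac{1}{2}$-scaling that makes $m(c)$ a vertex and for the $\frac{1}{3}$-scaling needed by Theorem~\ref{explicit_blowup}.
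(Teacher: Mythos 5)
Your overall strategy coincides with the paper's: define $\mathsf{Comb}_P$ as a covariant functor on $\int P$ whose action on a face inclusion transports (the paper says ``shifts'') the part of an lps $Q$ at $m(c')$ that lies over the bigger cube $c$ to a candidate lps centred at $m(c)$, with $\bot$ absorbing everything else. But you explicitly defer the one step that carries all the mathematical content --- ``the main obstacle is to make transport along a face inclusion combinatorially rigorous \dots and to prove uniqueness of $Q'$ whenever it exists'' --- and never resolve it. The paper's proof is precisely a lemma doing this: writing $S$ for the set of cubes of $Q$ whose underlying cube in $P$ is $c$, it establishes the sharp dichotomy that either $S$ contains the edge of $\tfrac16P$ ``going into $c$ in direction $i$'' (equivalently, the realization of $Q$ meets the open cell $\{c\}\times{]0,1[}^{\dim c}$), in which case the shift of $S$ \emph{is} an lps of $\R^n$ at $m(c)$, or $S$ is empty. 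Existence uses a topological argument you do not supply: once one point of the open cell of $c$ admits an embedding of $\R^n$ centred at it, every point does (in particular $m(c)$), and then Lemma~\ref{invariance_domain} together with Theorem~\ref{explicit_blowup} converts this into a combinatorial lps $Q'$; uniqueness follows because the $n$-cubes of $Q'$ must have the same underlying $n$-cubes in $P$ as $S$, which pins $Q'$ down as the shifted set. You cite Theorem~\ref{explicit_blowup} and Lemma~\ref{W_connected} but omit Lemma~\ref{invariance_domain}, which is what actually produces existence. Without this dichotomy your catch-all clause ``when no such $Q'$ exists \dots set $\mathsf{Comb}_P(f)(Q)=\bot$'' also makes the functoriality check unmanageable, since you cannot track when $\bot$ is produced along a composite versus along its factors.

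A secondary point: your description of the candidate as ``the unique compatible monomorphism $(\cdot\to\cdot\to\cdot)^{\otimes n}\hookrightarrow\tfrac16P$ whose image is the precubical set generated by $N(m(c))$'' is wrong if read literally. The image of an lps at $m(c)$ is generated by the combinatorial neighbourhood computed \emph{inside $Q'$}, not inside $\tfrac16P$; if it were the latter, there would be at most one non-$\bot$ element of $\mathsf{Comb}_P(k,c)$, contradicting the whole point of the construction (several traversals, hence several lps's, can be centred at the same $m(c)$ --- see Fig.~\ref{fig:G}). The correct uniqueness statement is relative to the prescribed set of underlying $n$-cubes, as in the paper's lemma.
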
    
    
\noindent See Appendix~\ref{sec:proof-comb} for a proof. From this proof, we see that there is a canonical action on morphisms for $\mathsf{Comb}_P$ which is the combinatorial analog of restriction, and which we now illustrate. Take the graph $G$ of Fig.~\ref{fig:G}. Consider the local precubical structure of $\R$ in $\frac{1}{6}G$ (which essentially corresponds to the last traversal of Fig.~\ref{fig:G}) defined by $(B,v)$ below. 
\begin{center}
    \begin{tikzpicture}[scale=0.75]
            \draw (-3,0) -- (3,0);
            \draw (0,-3) -- (0,3);
            \filldraw (0,0) circle (0.075);
            \filldraw (0,1.5) circle (0.05);
            \filldraw (1.5,0) circle (0.05);
            \filldraw (0,-1.5) circle (0.05);
            \filldraw (-1.5,0) circle (0.05);
            \filldraw (3,0) circle (0.075);
            \filldraw (0,3) circle (0.075);
            \filldraw (-3,0) circle (0.075);
            \filldraw (0,-3) circle (0.075);
            \filldraw (0,0.5) circle (0.025);
            \filldraw (0.5,0) circle (0.025);
            \filldraw (0,-0.5) circle (0.025);
            \filldraw (-0.5,0) circle (0.025);
            \filldraw (1,0) circle (0.025);
            \filldraw (0,1) circle (0.025);
            \filldraw (-1,0) circle (0.025);
            \filldraw (0,-1) circle (0.025);
            \filldraw (2,0) circle (0.025);
            \filldraw (0,2) circle (0.025);
            \filldraw (-2,0) circle (0.025);
            \filldraw (0,-2) circle (0.025);
            \filldraw (2.5,0) circle (0.025);
            \filldraw (0,2.5) circle (0.025);
            \filldraw (-2.5,0) circle (0.025);
            \filldraw (0,-2.5) circle (0.025);

            \draw (0,0) node[above left] {$v$};
            \draw (0,0) node[below right] {$B$};
            \draw (1.5,0) node[above] {$m(e_1)$};
            \draw (1.5,0) node[below] {$B'$};
            \draw (0,1.5) node[above right] {$m(e_2)$};
            \draw (0.25,0) node[above] {$e$};

            \draw[gray, line width=1.5mm,opacity=0.5] (0,-0.5) -- (0,0);
            \draw[gray, line width=1.5mm,opacity=0.5] (0,0) -- (0.5,0);
            \draw[gray, line width=1.5mm,opacity=0.5] (1,0) -- (2,0);
    \end{tikzpicture}
\end{center}
There are morphisms $s_i:(0,v)\to(1,e_i)$, $i\in\{1,2\}$ in $\int G$. We look at the set $S_1$ of cubes of $B$ whose underlying cube in $G$ is the edge $e_1$ on the right. Notice that $S_1$ (only) contains the edge $e$ `going inside $e_1$', namely $e=(e_1,(1/12))$. Now the precubical set $B'$ obtained by `shifting $S_1$ to the center of $e_1$', namely the precubical set spanned by 
\[\{ (x, (t+1/6)), (x, (t+1/2))\,|\,(x, (t))\text{ is an edge of }S_1\} \]
is a local precubical structure of $\R^n$. We define $\mathsf{Comb}_G(s_1)(B):=B'$. On the contrary, the set $S_2$ of cubes of $B$ whose underlying cube in $G$ is the edge $e_2$ above is empty. We define $\mathsf{Comb}_G(s_2)(B):=\bot$, so $\bot$ is the combinatorial analog of the empty space. 
    
Another example, in higher dimension, is given in Fig.~\ref{e}. 
\begin{figure}[ht!]
\begin{center}
\begin{tikzpicture}
\fill[mygray] (-0.5,-0.5) rectangle (0.5,0.5);
\fill[mygray] (1,-0.5) rectangle (2,0.5);
\draw[very thick] (-3,-1.5) rectangle (3,1.5);
\draw[very thick] (0,-1.5) -- (0,1.5);
\draw (-3,0) -- (3,0);
\draw[densely dotted] (-3,-0.5) -- (3,-0.5);
\draw[densely dotted] (-3,-1) -- (3,-1);
\draw[densely dotted] (-3,0.5) -- (3,0.5);
\draw[densely dotted] (-3,1) -- (3,1);
\draw (-1.5,-1.5) -- (-1.5,1.5);
\draw (1.5,-1.5) -- (1.5,1.5);
\draw[densely dotted] (2.5,-1.5) -- (2.5,1.5);
\draw[densely dotted] (2,-1.5) -- (2,1.5);
\draw[densely dotted] (1,-1.5) -- (1,1.5);
\draw[densely dotted] (0.5,-1.5) -- (0.5,1.5);
\draw[densely dotted] (-2.5,-1.5) -- (-2.5,1.5);
\draw[densely dotted] (-2,-1.5) --   (-2,1.5);
\draw[densely dotted] (-1,-1.5) --   (-1,1.5);
\draw[densely dotted] (-0.5,-1.5) -- (-0.5,1.5);
\fill (0,0) circle (0.05);
\fill (-1.5,0) circle (0.05);
\fill (1.5,0) circle (0.05);
\draw (-1,0.25) node {\scalebox{0.8}{\(m(c_1)\)}};
\draw (0.5,0.25) node {\scalebox{0.8}{\(m(e)\)}};
\draw (2,0.25) node {\scalebox{0.8}{\(m(c_2)\)}};
\end{tikzpicture}
\end{center}
\caption{The central vertical edge $e$ is a common face of the cubes $c_1$ and $c_2$; the central grey cube is a local precubical structure of $\R^n$ centered at $m(e)$, and the right grey cube is its image under the action of $\mathsf{Comb}_P$, an lps of $\R^n$ centered at $m(c_2)$.}\label{e}
\end{figure} 

In the remaining of the paper, we assume that the action on morphisms of $\mathsf{Comb}_P$ is the canonical one. Now we can give a combinatorial analog of $\oindexp X n$.

\begin{definition}
    Let $P$ be a precubical set. An open set $U$ of $|P|$ is \emph{minimal} if there is a cube $\min(U)$ of $U$ such that $\min(U)$ is a face of every cube of $U$.
\end{definition}

\noindent Note that minimal open sets generate the topology by Remark~\ref{rem:wepsilon}.

    \begin{definition}
        Let $P$ be a precubical set of dimension $n$. Let $O_{\mathsf{min}}(|P|)$ be the full subcategory of $O(|P|)$ given by minimal open sets. For $U\in O_{\mathsf{min}}(|P|)$, define $\min(U)$ to be this `minimal' cube. Now we define a presheaf $C_P$ on $O_{\mathsf{min}}(|P|)$ by $C_P(U):=\mathsf{Comb}_P(\dim(\min(U)),\min(U))$. For $U\subseteq V$, if $U$ intersects $\min(V)$ then the restriction map is given by identity, else it is given by the the action of $\mathsf{Comb}_P$. We can extend this presheaf to $O(|P|)$ by right Kan extension, and the resulting presheaf is also called $C_P$.
    \end{definition}

    \begin{theorem}
        Let $P$ be a precubical set of dimension $n$, $B(C_P)$ the étale bundle associated to $C_P$. Then $B(C_P)\setminus\{\bot_x\,|\,x\in |P|\}$ is the $n$-blowup of $|P|_{lo}$.
    \end{theorem}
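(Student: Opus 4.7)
The plan is to compare stalks of the combinatorial presheaf $C_P$ with stalks of the sheaf $\oindexp{|P|_{lo}}{n}$ of $n$-traversals, and then upgrade this to an isomorphism of étale bundles under which the $\bot$-section corresponds precisely to the $\varnothing$-section together with the ``non-traversal'' germs that were removed in the definition of $\tilde X$.

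First I would reduce everything to the case where the open set in $|P|$ is of the form $W_\varepsilon(m(c))$ for some cube $c$, using the fact (Remark~\ref{rem:wepsilon}) that such elementary neighborhoods generate the topology. For a point $x$ in the interior of a cube $c$, the stalk $C_{P,x}$ is computed by taking the colimit of $C_P(U)$ over such minimal neighborhoods of $x$, and by the definition of the Kan extension this colimit picks out the fiber $\mathsf{Comb}_P(\dim(c),c)$ (up to the canonical restriction action). The non-$\bot$ elements of $\mathsf{Comb}_P(\dim(c),c)$ are precisely the subobjects $Q \subseteq \tfrac{1}{6}P$ with $Q\cong (\cdot\to\cdot\to\cdot)^{\otimes n}$ that form a local precubical structure of $\R^n$ at $m(c)$, modulo the equivalence implicit in the restriction of combinatorial presheaves.

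Next I would construct the comparison map $\Phi_P : B(C_P)\setminus\{\bot_x\}\to B(\oindexp{|P|_{lo}}{n})$ on the level of germs. Given a non-$\bot$ element $Q_x\in C_{P,x}$, Theorem~\ref{explicit_blowup} (using Remark~\ref{example}(ii) to avoid re-subdividing) provides an open neighborhood $W$ of $m(c)$ in $|Q|_{lo}$ dihomeomorphic to $\R^n$, whose image $A:=|W|\subseteq |P|_{lo}$ is a section of $\oindexp{|P|_{lo}}{n}$. I would set $\Phi_P(Q_x):=A_x$. Lemma~\ref{invariance_domain} (invariance of domain) guarantees that $A$ is open in $|P\cap A|_{lo}$, which is what makes $A$ a bona fide element of $\oindexp{|P|_{lo}}{n}(V)$ for a small enough ordered neighborhood $V$ of $x$. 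Well-definedness follows from the fact that two equivalent subobjects of $\tfrac{1}{6}P$ produce the same germ $A_x$, and that the image $A_x$ contains $x$ precisely when $Q\ne\bot$.

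Then I would check that $\Phi_P$ is a bijection onto the set of traversal germs $\{A_x : A \ni x\}$, i.e.~onto $\tilde X$. Surjectivity is the harder direction: given an $n$-traversal $A_x$, the image $A$ locally looks like $\R^n$ embedded in $|P|_{lo}$, so one applies Theorem~\ref{explicit_blowup} to recover, after sufficiently subdividing (three subdivisions as in the theorem, twice if needed, fitting into the $\tfrac{1}{6}P$ of the definition of $\mathsf{Comb}_P$), a subobject $Q\subseteq \tfrac{1}{6}P$ isomorphic to $(\cdot\to\cdot\to\cdot)^{\otimes n}$ whose germ at $x$ is sent to $A_x$ by $\Phi_P$. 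Injectivity follows since two lps's producing germs with the same underlying image in $|P|_{lo}$ must coincide as subobjects of $\tfrac{1}{6}P$ once restricted to a common neighborhood of $m(c)$. Here the principal obstacle is ensuring that the combinatorial restriction action of $\mathsf{Comb}_P$ along face maps matches the topological restriction of traversals along inclusions of minimal open sets; this is precisely what the ``shifting'' construction described after Proposition~\ref{prop:comb} and illustrated in Fig.~\ref{e} is designed to do, and it is what I would expect to require the most care.

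Finally, to promote $\Phi_P$ from a bijection to an isomorphism of étale bundles, I would verify that it is continuous and order-preserving on each basic open of the form $U_Q^+$ (Proposition~\ref{description_etale}): by construction $\Phi_P(U_Q^+)= V_A$ where $A$ is the image of the canonical $\R^n$-chart of $Q$, and $\Phi_P$ is the identity on $U$ after projection to $|P|_{lo}$, so it intertwines the two étale projections $\beta^+_X$. Since both sides are local dihomeomorphisms onto $|P|_{lo}$, $\Phi_P$ is itself a local dihomeomorphism, hence an isomorphism of étale bundles. The complement $\{\bot_x\}$ in $B(C_P)$ maps, under the extension of $\Phi_P$ to all of $B(C_P)$, to the union of $\{\varnothing_x\}$ with the germs $A_x$ where $x\notin A$, which is exactly the subset of $B(\oindexp{|P|_{lo}}{n})$ removed when forming $\tilde X$. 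This identifies $B(C_P)\setminus\{\bot_x\}$ with $\tilde X$.
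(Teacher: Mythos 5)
Your proposal is correct and follows essentially the same route as the paper: you define the comparison by sending a combinatorial lps to the realization of its underlying cubes (the paper's natural transformation $r:C_P\to\oindexp{X}{n}$ on minimal opens), pass to germs to get a map of étale bundles, establish bijectivity off the $\bot$-section via Lemma~\ref{invariance_domain}, Lemma~\ref{W_connected} and Theorem~\ref{explicit_blowup}, and conclude the isomorphism from the fact that a bijective map of étale spaces over the same base is an isomorphism. The only difference is presentational: you spell out the stalk comparison and the surjectivity/injectivity checks in more detail than the paper does.
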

    \begin{proof}
        Define a morphism $r:C_P\xrightarrow{}\oindexp{X}{n}$ in the following way: for $U\in O_{\mathsf{min}}(|P|)$, send a local precubical structure of $\R^n$ $(Q,v)$ to the `maximal' subset of $U$ corresponding to it, that is the subset consisting of the realization of the underlying cubes of $Q$. One readily checks that $r$ is a natural transformation, i.e.~it is compatible with restriction. Since the open sets of $O_{\mathsf{min}}(|P|)$ form a base of $|P|$, this is enough to define $r$. Looking at the map induced by $r$ on the stalks, we see that $r$ induces a morphism $B(r):B(C_P)\xrightarrow{}B(\oindexp{|P|}{n})$ over $|P|$ which restricts to a morphism $B(C_P)\setminus\{\bot_x\,|\,x\in |P|\}\xrightarrow{}\Tilde{|P|}$. By Lemma~\ref{invariance_domain} (and Lemma~\ref{W_connected}) and Theorem~\ref{explicit_blowup} (and Remark~\ref{example} (2)), this map is bijective. But this is the restriction of an étale bundle (because $g\circ f$ and $g$ étale implies $f$ étale), so it is in fact an isomorphism of locally ordered spaces.
    \end{proof}

\noindent The proof is essentially sheaf-theoretic, which is quite satisfying. The `non-categorical' content of the proof is clearly identified and entirely contained in Theorem~\ref{explicit_blowup}. Note that this indeed gives a purely combinatorial description of the blowup, given a purely combinatorial description of the topology of $|P|$, which is indeed possible. Note also that we can build $B(C_P)$ without doing Kan extension first, which simplifies the construction. By way of an example, notice that all the examples of blowup we gave, for instance in Fig.~\ref{fig:G} and Fig.~\ref{fig:K}, already correspond to combinatorial blowups (where we do not subdivide because there is no need for it in these cases): the traversals we listed are always the `maximal' traversals, which correspond to combinatorial embeddings. So the last theorem is just the formalization of a natural intuition.

\subsection{Euclidean locally ordered realization}
    Recall the definition of $R^{\otimes n}$ from Remark~\ref{example} (3). One can wonder if the result of Theorem~\ref{explicit_blowup} can be generalized from the local setting to a more `global' one. The answer is positive, as fomalized by Theorem~\ref{global}. We sketch the proof below. Note that this theorem has a nice corollary: if $P$ is a subdivision of $Q$, then $P\cong R^{\otimes n}$ if and only if $Q\cong R^{\otimes n}$, since subdivision does not change the realization.  Recall that the $p$-cone of $\R^n$ for $p\in\{<,=,>\}^n$ is the subset $\{(x_1,\dots,x_n)\in\R^n\,|\,\forall i\, x_i p_i 0\}$. If the edges adjacent to a vertex $v$ are all sent to axis of $\R^n$, then we can naturally send the cubes adjacent to $v$ to the corresponding cones.

    \begin{theorem}\label{global}
			For every precubical set \(P\), we have 
			\[|P|_{lo}\cong(\R^n,\leqslant^n)\quad\Rightarrow\quad P\cong R^{\otimes n}\:.\]
        %~ Let $P$ be a precubical set whose locally ordered realization is isomorphic to $(\R^n,\leqslant^n)$. Then $P\cong R^{\otimes n}$.
    \end{theorem}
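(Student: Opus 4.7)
The plan is to reduce the global statement to repeated applications of the local structure theorem (Theorem~\ref{explicit_blowup}), combined with a propagation argument exploiting the connectedness of $\R^n$.

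First, since $|P|_{lo} \cong (\R^n,\leqslant^n)$, at every vertex $v$ of $P$ the pair $(P,v)$ is a local precubical structure of $\R^n$. Applying Theorem~\ref{explicit_blowup} yields, for each $v$, a monomorphism $\iota_v:(\cdot\to\cdot\to\cdot)^{\otimes n}\hookrightarrow\frac{1}{3}P$ sending the central vertex to $v$, with image generated by $N(v)$. Reading the cube-adjacency data of $v$ in $P$ off from $\frac{1}{3}P$ (using Lemma~\ref{W_connected}), we conclude that every vertex of $P$ has exactly $n$ outgoing edges, exactly $n$ incoming edges, and is a corner of exactly $2^n$ distinct $n$-cubes, each of which has $v$ as exactly one of its corners. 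Any residual degeneracy (self-loop, multi-edge, or a vertex appearing as two corners of the same cube) would contradict $|P|_{lo}\cong\R^n$ locally at $\phi(v)$, where $\phi:|P|_{lo}\to\R^n$ is the given dihomeomorphism.

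Second, I would pull back the standard coordinate directions of $\R^n$ along $\phi$ to obtain a labelling of the edges of $P$ by $\{1,\dots,n\}$. Concretely, at each vertex $v$ the $n$ outgoing edges correspond bijectively, via $\iota_v$ and $\phi$, to the $n$ coordinate axes emanating from $\phi(v)$; the same labelling is induced on incoming edges. This is canonical only up to the action of $\mathfrak{S}_n$, which is precisely the freedom allowed when working in $\square_{\mathsf{sym}}$ (our convention of \S\ref{sec:precubical_sets}). Compatibility between neighbouring labellings holds because on an edge shared by two local models, the coordinate direction of its image under $\phi$ is unambiguous, so by connectedness of $\R^n$ these local choices glue into a globally coherent labelling of all edges.

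Third, the global labelling yields an isomorphism of symmetric precubical sets $P\cong G_1\otimes\cdots\otimes G_n$, where $G_i$ denotes the subgraph of $P$ consisting of edges with label $i$ (on the full vertex set). Each $G_i$ is a directed graph whose locally ordered realization is the $i$-th coordinate line of $\R^n$, hence $\R$ itself; being a linear directed graph with countably many vertices and no endpoints, $G_i\cong R$. Therefore $P\cong R^{\otimes n}$, as desired.

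The main obstacle is the second step: rigorously proving that the direction labelling extracted from the local theorem is globally coherent modulo $\mathfrak{S}_n$. Concretely, whenever two top-dimensional cubes $c$ and $c'$ of $P$ meet along a common codimension-$1$ face, one must check that the local embeddings $\iota_v$ at the shared vertices assign consistent labels to that face. This should follow from the uniqueness (up to $\mathfrak{S}_n$) of the local structure guaranteed by Theorem~\ref{explicit_blowup}, combined with the rigidity imposed by the order-preserving $\phi$ (each edge maps to a chain whose coordinate behaviour in $\R^n$ is forced by its endpoints); organising this consistency argument cleanly, rather than as a sprawling case analysis on face pairs, is the technical crux of the proof.
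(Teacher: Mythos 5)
Your first step is essentially the paper's: invoke Theorem~\ref{explicit_blowup} at each vertex and use the global dihomeomorphism $\phi$ to rule out degeneracies (the paper does this concretely, showing a vertex cannot sit at two corners of one cube because the two segments joining it to the cube's center would be sent to segments leaving $\phi(v)$ in different directions and hence could not meet). The genuine gap is your third step. As literally stated it is not even well-formed: if each $G_i$ carries the full vertex set of $P$, then $G_1\otimes\cdots\otimes G_n$ has vertex set $(P_0)^n$, and the subgraph of all label-$i$ edges realizes to a disjoint union of many lines (one per ``row''), not to $\R$. Charitably reinterpreted, the claim $P\cong G_1\otimes\cdots\otimes G_n$ presupposes a global coordinatization of $P_0$ by $\Z^n$, and that coordinatization is the actual content of Theorem~\ref{global}, not a consequence of having a coherent edge labelling. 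Local regularity plus a consistent labelling is satisfied by quotients such as tori and cylinders (already for $n=1$, the directed cycle has one edge in and one edge out at every vertex); what excludes them is only the global homeomorphism with $\R^n$, which your proposal never invokes at this stage. You also need injectivity of the coordinatization: why can two distinct sequences of labelled moves not reach the same vertex?

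This is precisely what the paper's proof spends most of its effort on: it inductively constructs embeddings $\iota_p:(-1\to0\to1)^{\otimes n}\hookrightarrow P$ for every $p\in\Z^n$, using the fact that $f$ sends each local model onto a \emph{geometric cube} of $\R^n$ (so that three quarters of a cube determine the fourth, which drives the induction cone by cone in lexicographic order), shows by connectedness that these cover $P$, and then exhibits $P$ as the quotient of $\bigsqcup_{p\in\Z^n}(-1\to0\to1)^{\otimes n}$ by the equivalence relation whose realization presents $\R^n$ as the corresponding quotient, forcing $P\cong R^{\otimes n}$. So the ``technical crux'' is not, as you suggest, the $\mathfrak{S}_n$-coherence of the labelling (which is comparatively easy, since $\phi$ fixes the coordinate direction of every edge unambiguously), but the passage from local product structure to a global one; your proof needs to supply that argument, and the labelling alone will not do it.
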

    \begin{proof}
        The idea is the following: using Theorem~\ref{explicit_blowup}, we know that $P$ is locally isomorphic to $R^{\otimes n}$. In addition, it is `infinitely unfolded' since $|P|\cong\R^n$, which will give the result.
        
        More precisely, fix an isomorphism $f:|P|\xrightarrow{}\R^n$. Choose a vertex $v$ of $P$. Without loss of generality, we suppose that $f(v)=0$. Then we can find a neighborhood $W$ of $v$ which satisfies the hypotheses of Theorem~\ref{explicit_blowup}, so $(P,v)$ is a local precubical structure of $\R^n$. As a consequence, we can define a map $\phi:W_{1/2}(v)\xrightarrow{}\R^n$ by sending $v$ to $0$, the edges to the axis and the cubes accordingly to the cones. We get that $W_{1/2}(v)$ is \emph{canonically isomorphic} to $\R^n$, in the sense that the preorder on it given by cubewise increasing paths is a partial order, and $\phi$ is a dihomeomorphism. Then, essentially by Lemma 5.4 of~\cite{haucourt2024non} and Corollary 10.4.10 of~\cite{schroder2003ordered}, and since $W_{1/2}(v)$ is canonically isomorphic to $\R^n$, $f$ sends $W_{1/2}(v)$ to a \emph{geometric cube} of $\R^n$ centered at $0$, where a geometric cube means a product of intervals, and it sends the edges in $W_{1/2}(v)$ to the axes of $\R^n$, and every cube to the corresponding cone. We can use this to prove that in fact $v$ is simple: if there is a cube $c$ with vertex $v$ in two different corners, then there are two distinct segments joining $v$ to the center of $c$, but the restriction of these segments to $W_{1/2}(v)$ are sent by $f$ to segments going in different directions, so by continuity they cannot meet at the image of the center of $c$ (picture below). 
        
        \begin{center}       
            \scalebox{1}{
            \begin{tikzpicture}
            \draw (-1,-1) rectangle (1,1) ;
            \draw[line width=1mm,color=gray] (-1,0) -- (-1,-1) -- (0,-1) ;
            \draw[line width=1mm,color=lightgray] (1,0) -- (1,1) -- (0,1) ;
            \draw[dashed] (-2,-2) rectangle (0,0) ;
            \draw[dashed] (0,0) rectangle (2,2) ;
            \filldraw[fill=white] (-1,-1) node[below]{\(v\)} circle (0.1) ;
            \filldraw[fill=white] (1,1) node[above]{\(v\)} circle (0.1) ;
            \draw (2.75,0) node{\(\to\)} node[above]{\(f\)} ;
            \begin{scope}[xshift=-0.5cm]
            \draw[line width=1mm,color=gray] (5,1) -- (5,0) -- (6,0) ;
            \draw[line width=1mm,color=lightgray] (4,0) -- (5,0) -- (5,-1);
            \filldraw[fill=white] (5,0) node[above right]{\(0\)} circle (0.1) ;
            \draw[dashed] (4,-1) rectangle (6,1);
            \foreach \x in {2,3,...,25} {
            \draw (1/\x+1/2,1/\x) circle (0.005);	
            \draw (-1/\x+1/2,-1/\x) circle (0.005);	
            \draw (6-1/\x,1-1/\x) circle (0.005);	
            \draw (4+1/\x,-1+1/\x) circle (0.005);	
            }
            \end{scope}
            \end{tikzpicture}
            }
        \end{center}
        
        \noindent As a consequence $W_1(v)$ is also canonically isomorphic to $\R^n$. By a similar argument, all the neighboring vertices of $v$ (graph-theoretically) are distinct. In fact, this defines a monomorphism (of symmetric precubical sets) $\iota:(-1\xrightarrow{}0\xrightarrow{}1)^{\otimes n}\xrightarrow{} P$ centered at $v$, such that the image of a tuple whose only nonzero term is in position $i$ is the (graph-theoretic) neighbor of $v$ which is sent to the $i$th axis of $\R^n$. Note that all the cubes that are adjacent to $v$ are in the image of $\iota$, because $(P,v)$ is a local precubical structure of $\R^n$. Note also that $\Im(f\circ|\iota|)$ is a geometric cube in $\R^n$ centered at $0$.

        \begin{center}       
            \begin{tikzpicture}[scale=0.9]
            \fill[lightgray] (-1/2,0) rectangle (1/2,1) ;
            \foreach \x in {-3,-2,...,5}
            \draw[densely dotted] (\x/2,-1/2) -- (\x/2,1/2);
            \draw[densely dotted] (-2,1/2) -- (3,1/2) ;
            \draw[densely dotted] (-2,-1/2) -- (3,-1/2) ;
            \foreach \y in {-4,-3,-2,2,3,4} {
            \draw[densely dotted] (0,\y/2) -- (1,\y/2);	
            }
            \draw[densely dotted] (0,-5/2) -- (0,-1/2);
            \draw[densely dotted] (1,-5/2) -- (1,-1/2);
            \draw[densely dotted] (0,5/2) -- (0,1/2);
            \draw[densely dotted] (1,5/2) -- (1,1/2);
            \draw (-2,0) -- (3,0) ;
            \draw (1/2,-5/2) -- (1/2,5/2) ;
            \end{tikzpicture}
        \end{center}

        Now we want to define a family $\iota_p:(-1\xrightarrow{}0\xrightarrow{}1)^{\otimes n}\hookrightarrow P$ for every $p\in \Z^n$ which cover $P$. We already defined $\iota_{0\dots 0}:=\iota$. We consider every neighbor (in the graph-theoretic sense) of $v$ and do the same as in the last paragraph to get $\iota_p$ for $p$ a neighbor of $(0,\dots,0)$ in $\Z^n$. We do this inductively to get $\iota_{0\dots0k0\dots0}$ where $k$ is in position $i$, for every $1\leq i\leq n$ and $k\in\Z$. Simultaneously, we define sequences $(x_{i,j})_{j\in\Z}$ for $1\leq i\leq n$, such that $x_{i,j}$ belongs to the $i$th axis of $\R^n$, and $\Im(f\circ|\iota|)$ is centered at $x_{i,j}$. In the above picture, we represented the axis of $\R^2$; the points on the axis are the $x_{i,j}$, and the dotted squares are the images of the $\iota$s. The important point is that these form the image of a precubical subset of $P$. Now for every $p\in \Z^n$, we build $\iota_p:(-1\xrightarrow{}0\xrightarrow{}1)^{\otimes n}\hookrightarrow P$ centered at $([x_{1,p_1}],\dots,[x_{n,p_n}])$, where $[x_{i,j}]$ is the value of the only nonzero coordinate of $x_{i,j}$, and whose other vertices (in $\R^n$) are the expected ones with respect to the $(p_i)_i$. We do this by induction, using the construction of the last paragraph and the fact that the image we get in $\R^n$ each time is a geometric cube, i.e.~if we know 3/4 of the cube we know all the cube. More precisely, we use the lexicographic order on each cone of $\Z^n$ (discarding the sign). For example, see the grey cube on the picture, which is the first inductive step in the left upper cone. Now notice that, by construction, if one vertex of $P$ belongs to the image of one of the $\iota_p$, all of its adjacent cubes also are, so in fact this is the case of all the cubes of $P$, because $P$ is connected (since its realization is isomorphic to $\R^n$). Putting everything together, we get an epimorphism 
        \[\bigsqcup_{p\in \Z^n}\iota_p:\bigsqcup_{p\in \Z^n}(-1\xrightarrow{}0\xrightarrow{}1)^{\otimes n}\xrightarrow{}P\]
        which exhibits $P$ as a quotient of $\bigsqcup_{p\in \Z^n}(-1\xrightarrow{}0\xrightarrow{}1)^{\otimes n}$ by an equivalence relation $S$ (since we are in a topos), whose realization exhibits $\R^n$ as a quotient of $\bigsqcup_{p\in \Z^n}|-1\xrightarrow{}0\xrightarrow{}1|^n$. This implies that 
        \[P\cong \bigsqcup_{p\in \Z^n}(-1\xrightarrow{}0\xrightarrow{}1)^{\otimes n}/S\cong R^{\otimes n}\]
    \end{proof}

\section{The path lifting property}\label{sec_path}
We have to mention that the $n$-blowup may forget a lot of points. 
For example, the $(n+1)$-blowup of the \(n\)-dimensional euclidean local order is empty. We may consider changing the universal property we want to achieve, for example lifting a larger class of morphisms, and adapt the construction to see what this gives. In this section, we discuss the directed paths lifting property, which is a very important property since directed paths represent execution traces of programs, and we do not want to forget any of these if we want to use our blowup in the context of concurrency theory. We give an idea of the way one can express this property with the sheaf structure. However, the results are still preliminary, and this section should be seen as a future work discussion. In the following, a directed path is supposed injective. This is not very restrictive knowing~\cite[Theorem 3.6]{fahrenberg2007reparametrizations}.

Notice that for every $n\in\N\setminus\{0\}$, $z\in \oindexp{\R^n,0}{1}\setminus\{A_0\,|\,0\not\in A\}$, for every local order $X$ and $x\in X$, there is a map \[z^*:\oindexp{X,x}{n}\setminus\{A_x\,|\,x\not\in A\}\xrightarrow{}\oindexp{X,x}{1}\setminus\{A_x\,|\,x\not\in A\}\]
defined in the following way: an element of $\oindexp{X,x}{n}\setminus\{A_x\,|\,x\not\in A\}$ is an equivalence class of order embeddings from an open subset of $\R^n$ to an ordered open of $X$. Notice that we can always take a representative $h:U\xrightarrow{}V$ of the class such that $U$ contains $0$ and that $h(0)=x$. Let $h$ be such a representative of an element of $\oindexp{X,x}{n}\setminus\{A_x\,|\,x\not\in A\}$, and $\gamma$ be such a representative of $z$. Then $z^*([h]):=[h\circ\gamma]$. This is clearly well-defined. See the picture below.

\begin{center}
    \begin{tikzpicture}
        \draw (-1,0) -- (1,0);
        \draw (0,-1) -- (0,1);
        \draw (0,0) node[above right] {$0$};
        \fill (0,0) circle (0.05);
        \draw (-1,-0.4) .. controls (-0.8,-0.35) and (-0.6,-0.3) .. 
    (-0.5,-0.25)   % Premier point anguleux en x=-0.5
    .. controls (-0.4,-0.2) and (-0.3,-0.15) .. 
    (-0.2,-0.1)    % Deuxième point anguleux en x=-0.2
    .. controls (-0.1,-0.05) and (0,0) .. 
    (0.2,0.05)    % Troisième point anguleux en x=0.2
    .. controls (0.4,0.15) and (0.6,0.25) .. 
    (0.7,0.3)     % Quatrième point anguleux en x=0.7
    .. controls (0.8,0.35) and (0.9,0.45) .. 
    (1,0.5);
    \draw (0.9,0.6) node {$\gamma$};
        
        \begin{scope}[xshift=20mm]
            \draw (0,.5) node {$h$};    
            \draw (0,0) node {$\to$};          
        \end{scope}

        \begin{scope}[xshift=40mm]
            \draw (0,0) node[above left] {$v$};
            \fill (0,0) circle (0.05);
            \draw[smooth] 
        (-1,0) .. controls (-0.8,0.5) and (-0.5,0.7) .. (0,0.8)
               .. controls (0.3,0.9) and (0.6,0.5) .. (0.8,0.2)
               .. controls (0.9,-0.1) and (0.7,-0.6) .. (0.5,-0.8)
               .. controls (0.3,-0.9) and (-0.2,-0.7) .. (-0.4,-0.4)
               .. controls (-0.6,-0.2) and (-0.9,0.1) .. (-1,0);
            \draw (-0.5,-0.25)   % Premier point anguleux en x=-0.5
    .. controls (-0.4,-0.2) and (-0.3,-0.15) .. 
    (-0.2,-0.1)    % Deuxième point anguleux en x=-0.2
    .. controls (-0.1,-0.05) and (0,0) .. 
    (0.2,0.05)    % Troisième point anguleux en x=0.2
    .. controls (0.4,0.15) and (0.6,0.25) .. 
    (0.7,0.3);
    \draw (1,0.6) node {$[h]$};
    \draw (0.6,0) node {$z^*[h]$};

        \end{scope}
    \end{tikzpicture}
\end{center}

\begin{proposition}\label{local_dipath_lift}
    Let $X$ be a local order. The following are equivalent: 
    \begin{enumerate}
        \item The family $\{z^*\,|\,z\in \oindexp{\R^n,0}{\emph{1}}\setminus\{A_0\,|\,0\not\in A\}\}$ is jointly surjective for every $x\in X$.
        \item For every directed path $\gamma$ on $X$, every $x\in \dom(\gamma)$, there is an open neighborhood $U$ of $x$ such that $\gamma_{|U}$ admits a lift to $\Tilde{X}$ the $n$-blowup of $X$.
    \end{enumerate}
\end{proposition}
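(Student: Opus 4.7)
Both directions rest on the local structure of the blowup: a point of $\tilde X$ over $x$ is an $n$-traversal $[h]_x$, represented by an order embedding $h\colon U'\to V\subseteq X$ with $U'\subseteq\R^n$ open, $0\in U'$, and $h(0)=x$; the basic ordered open $V_{h(U')}\subseteq\tilde X$ (an ordered neighborhood of $[h]$) is mapped dihomeomorphically onto $h(U')$ by $\beta_X$, with inverse the section $y\mapsto[h]_y$.

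For (i) $\Rightarrow$ (ii), let $\gamma$ be a directed path, $t_0\in\dom(\gamma)$ and $x:=\gamma(t_0)$. Since $\gamma$ is an injective directed path, it is locally an embedding near $t_0$, so its image defines a germ $z_0\in\oindexp{X,x}{1}\setminus\{A_x\,|\,x\not\in A\}$. By (i), $z_0=z^*[h]$ for some $[h]$ and some $z$; unfolding the definition, representatives $h\colon U'\to V$ and $\gamma_0\colon J\to U'$ satisfy $[h\circ\gamma_0(J)]_x=[\gamma(\dom\gamma)]_x$, so on a small enough neighborhood $U$ of $t_0$ we have $\gamma(U)\subseteq h(U')$, and $\tilde\gamma:=(\beta_X|_{V_{h(U')}})^{-1}\circ\gamma|_U$ is a continuous lift of $\gamma|_U$ to $\tilde X$.

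For (ii) $\Rightarrow$ (i), fix $x\in X$ and a 1-traversal $z_0\in\oindexp{X,x}{1}\setminus\{A_x\,|\,x\not\in A\}$, and choose a representative $\gamma\colon I\to X$ with $0\in I$ and $\gamma(0)=x$, which is a directed path. Apply (ii) around $0$ to obtain a lift $\tilde\gamma\colon U\to\tilde X$; set $[h]:=\tilde\gamma(0)\in\beta_X^{-1}(x)$ with representative $h\colon U'\to V$. By continuity of $\tilde\gamma$ at $0$ we may shrink $U$ so that $\tilde\gamma(U)\subseteq V_{h(U')}$, hence $\gamma(U)\subseteq h(U')$; then $\gamma_0:=h^{-1}\circ\gamma|_U$ is an order embedding of an open subset of $\R$ into $U'$ whose image contains $0$, its germ $z:=[\gamma_0(U)]_0$ lies in $\oindexp{\R^n,0}{1}\setminus\{A_0\,|\,0\not\in A\}$, and $z^*[h]=[h\circ\gamma_0(U)]_x=[\gamma(U)]_x=z_0$. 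The only delicate point is that $\tilde\gamma$ must land in a single chart $V_{h(U')}$ rather than jumping between charts, but this is guaranteed by continuity at $0$; the rest is routine unfolding of germs.
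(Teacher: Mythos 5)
Your proof is correct and follows essentially the same route as the paper's: in (i)\(\Rightarrow\)(ii) the lift \((\beta_X|_{V_{h(U')}})^{-1}\circ\gamma|_U\) is literally the paper's section \(t\mapsto A_{\gamma(t)}\), and in (ii)\(\Rightarrow\)(i) both arguments push the lifted path through a single euclidean chart of \(\tilde X\) around \(\tilde\gamma(0)\) to produce the required \(z\). The only difference is cosmetic (you work with the basic open \(V_{h(U')}\) where the paper takes an arbitrary euclidean neighborhood of \(\tilde\gamma(0)\)), and your explicit remark that the lift must land in one chart near \(0\) matches the paper's implicit use of continuity.
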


\noindent The proof is routine and given in Appendix~\ref{sec:proof-lift}. If one of the previous conditions is satisfied, we say that $X$ satisfies the \emph{local directed paths lifting property}. This implies the existence of a global lift for every directed path if, for example, the set of singularities of $X$ is discrete. Note that this is not true in general. For example, one can consider the subspace $\{(x,y,0)\in\R^3\,|\,x>-1\}\cup\{(x,0,z)\in\R^3\,|\,x<1\}$ of $\R^3$, which satisfies the local directed paths lifting property, but such that the directed path $x\in\R\mapsto(x,0,0)$ cannot be lifted to a (continuous) path on the corresponding blowup 
%(notice that we can get a continuous lift for a smaller topology on $\Tilde{X}$, namely the smallest topology for which the blowup map is continuous). 
However, we can give a global version. For every $n\in\N\setminus\{0\}$, for every local order $X$, for every $[f:E\xrightarrow{}A]\in \oindexp{X}{n}(X)$ and $[\gamma:E'\xrightarrow{}E]\in \oindexp{E}{1}(E)$, we have an element $[f\circ\gamma]$ of $\oindexp{X}{1}(X)$. This defines a map $l_n:\bigsqcup_{[E\xrightarrow{}A]\in \oindexp{X}{n}(X)}\oindexp{E}{1}(E)\xrightarrow{}\oindexp{X}{1}(X)$. A similar proof to the previous one then gives:

\begin{proposition}
    For any local order $X$, every directed path on $X$ can be lifted to a directed path on $\Tilde{X}$ the $n$-blowup of $X$ if and only if the function $l_n$ defined above is surjective.
\end{proposition}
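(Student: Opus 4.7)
The function $l_n$ admits a transparent interpretation: an element of $\oindexp{X}{n}(X)$ is a subset $A\subseteq X$ dihomeomorphic to some $n$-dimensional euclidean local order, and through this identification an element of $\oindexp{E}{1}(E)$ corresponds to a $1$-dimensional euclidean subset $B\subseteq A$. The map $l_n$ then sends the pair $(A,B)$ to $B$ viewed as a subset of $X$. Consequently, surjectivity of $l_n$ amounts to the statement that \emph{every $1$-dimensional euclidean subset of $X$ is contained in some $n$-dimensional euclidean subset of $X$}.

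For the forward direction, assume $l_n$ is surjective. Given a directed path $\delta:E'\to X$, its image $B=\delta(E')$ is a connected $1$-dimensional euclidean subset of $X$, so by assumption $B\subseteq A$ for some $A\in \oindexp{X}{n}(X)$. The inclusion $\iota:A\hookrightarrow X$ is then a local embedding from an $n$-dimensional euclidean local order, and Theorem~\ref{blowup} provides a unique lift $\tilde\iota:A\to\tilde X$ which is itself a local embedding (in particular, injective, since $\beta_X\circ\tilde\iota=\iota$ is injective). Restricting the codomain of $\delta$ to $A$ and composing with $\tilde\iota$ yields $\tilde\delta:=\tilde\iota\circ\delta:E'\to\tilde X$, which is an injective morphism of locally ordered spaces from a connected $1$-dimensional euclidean space, hence a directed path on $\tilde X$; it satisfies $\beta_X\circ\tilde\delta=\delta$ by construction.

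For the backward direction, assume every directed path lifts. Take $B\in \oindexp{X}{1}(X)$ and decompose it into connected components $B=\bigsqcup_i B_i$. Each $B_i$ is the image of a directed path $\delta_i:E'_i\to X$, which by hypothesis lifts to $\tilde\delta_i:E'_i\to\tilde X$. The plan is to produce, for each $i$, an open neighborhood $\tilde U_i$ of $\tilde\delta_i(E'_i)$ in $\tilde X$ on which $\beta_X$ restricts to a dihomeomorphism onto its image $A_i:=\beta_X(\tilde U_i)\subseteq X$: each point $\tilde\delta_i(t)$ admits an ordered neighborhood of the form $U^A$ (Proposition~\ref{description_etale}) on which $\beta_X$ is a dihomeomorphism, and these local choices can be amalgamated into a single $\tilde U_i$ on which $\beta_X$ is globally injective, exploiting the injectivity of $\delta_i$ (so that the path does not revisit any point of $X$) and the fact that the lift $\tilde\delta_i$ singles out a coherent traversal along the path. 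The resulting $A_i$ is then $n$-dimensional euclidean and contains $B_i$; after shrinking the $\tilde U_i$ to pairwise disjoint tubular neighborhoods, $A:=\bigsqcup_i A_i\in \oindexp{X}{n}(X)$ contains $B$, witnessing surjectivity of $l_n$.

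The main obstacle is the amalgamation step in the backward direction, which amounts to a non-Hausdorff variant of the tubular neighborhood theorem. Since $\tilde X$ is typically non-Hausdorff (distinct traversals above the same point of $X$ need not be separated by disjoint neighborhoods), producing an open set on which $\beta_X$ is \emph{globally} injective, rather than merely locally, is delicate; the existence of the global lift $\tilde\delta_i$ provides the coherent choice of traversal required to resolve this ambiguity, but turning the local charts $U^A$ into a single injective tube --- especially when $E'_i$ is non-compact --- requires care. This subtlety is presumably one of the reasons the authors flag this section as preliminary.
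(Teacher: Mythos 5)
Your forward direction is correct and is essentially the argument the paper intends. (The paper would produce the lift even more directly: once $\Im(\delta)\subseteq A$ for a global section $A\in\oindexp{X}{n}(X)$, the map $t\mapsto A_{\delta(t)}$ is continuous by Proposition~\ref{description_etale} and lands in $\Tilde{X}$ because $\delta(t)\in A$; by the uniqueness clause of Theorem~\ref{blowup} this coincides with your $\tilde\iota\circ\delta$.) Your reformulation of surjectivity of $l_n$ is also the right one, modulo the routine check that a $1$-euclidean subset $B$ of $X$ contained in $A$ is $1$-euclidean as a subset of $A$ (transitivity of induced structures), and modulo the standing identification --- assumed throughout \S\ref{sec_path} --- of injective directed paths with connected $1$-euclidean subsets of $X$.

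The backward direction, however, is not a proof: the amalgamation step you flag is the entire content of that implication, and you leave it unproved. Concretely, the lift $\tilde\delta_i$ only hands you, for each $t$, a germ $\tilde\delta_i(t)\in\oindexp{X,\delta_i(t)}{n}$, i.e.\ a compatible family of \emph{local} sections along $B_i$; upgrading this to a single global $A_i\in\oindexp{X}{n}(X)$ containing $B_i$ requires shrinking overlapping charts $U_{A}$ until $\beta_X$ becomes injective on their union (and, for the final disjoint union over $i$, that the $A_i$ do not accumulate on one another, since a union of disjoint euclidean subsets of $X$ need not be euclidean for the subspace structure). Neither step is automatic: continuity of $\tilde\delta_i$ gives agreement of germs along $B_i$, hence agreement of the chosen local sections on \emph{some} neighborhood of each point of $B_i$, but not on the full overlaps of the charts, and the needed shrinking is exactly the ``non-Hausdorff tubular neighborhood'' you mention. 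For comparison: the paper's own justification is the single sentence ``a similar proof to the previous one then gives'', and the converse direction of that previous proof (Proposition~\ref{local_dipath_lift}, Appendix~\ref{sec:proof-lift}) only ever produces \emph{one} chart around \emph{one} point of the lifted path --- which is precisely the step that does not globalize without the gluing you describe. So you have correctly located the missing argument, and your write-up is a more candid account of what is needed than the paper's; but as it stands the ``only if'' half of the proposition is fully established by neither.
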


\noindent This is clearly always true for $n=1$, which explains why we always get the directed paths lifting property for graphs. Combinatorial versions of these characterizations is the case of realizations of precubical sets should be studied. In fact, it would be interesting to look at lifting properties \emph{up to (directed) homotopy}, i.e.~given a directed path, is there a dihomotopic path~\cite{grandis2009directed} which can be lifted to the blowup. This would be particularly interesting in the case of \emph{non-selfintersecting} precubical sets, since a directed path on their realization is always dihomotopic to a path on their $1$-skeleton~\cite{fajstrup2005dipaths}, which should give a simple criterion for the homotopy path lifting property. This is work in progress.

\section{Conclusion}
In this paper, we use the abstract theory of sheaves to generalize a blowup construction on all locally ordered spaces. This is interesting for several reasons. Firstly, sheaf theory significantly simplifies the proofs: notice that we got Proposition~\ref{description_etale} for free; in fact, one can look at the amount of work needed in~\cite{haucourt2024non} to prove Theorem~\ref{thm:blowup_graphs}, and compare it to our proof of the more general Theorem~\ref{blowup} (however, the proof from~\cite{haucourt2024non} also provides an explicit description of the blowup, which also needed a lot of work in \S\ref{sec_pset}). Secondly, doing the link with sheaf theory is interesting on its own right, and paves the way to generalizations of this procedure outside of directed algebraic topology. In fact, the first author is working on a generalization of the blowup in an abstract setting, which can be particularized to various contexts, including differential geometry and sequential spaces, and with possible applications to logic~\cite{de2023geometric,toen2008algebrisation}. One can also wonder whether the sheaf $\oindexp{X}{1}$ for some local order $X$ represents some kind of `ordered tangent bundle' of $X$, since its stalk at $x\in X$ precisely represents the equivalence classes of directed path crossing $x$. In addition, the idea of `completing' the blowup into an étale bundle could be interesting on its own, as it replaces a map with a much better behaved one. Finally, would also be inclined to adapt the sheaf theoretic blowup procedure to other models of directed spaces, such as streams~\cite{krishnan2009convenient} which already have a sheaf theoretic flavour.

More concretely, we wonder if the results of~\cite[\S6]{haucourt2024non} can be extended to our setting; in other words, if there is a pseudometric on the blowup of the locally ordered realization of a higher dimensional automaton \(A\) which reflects the execution time of the traces on \(A\). In~\cite{haucourt2024non}, this pseudometric comes from a \emph{smooth} manifold structure on the blowup. In this paper, we have only defined a locally ordered manifold structure on the blowup, but as suggested in the introduction, it should not be very hard to get a smooth atlas in the case of locally ordered realizations of precubical sets (we even conjecture the existence of an atlas whose related transition maps are identities). This would provide concrete applications of the generalized blowup to the study of conccurent systems. However, this is a bit far from the core subject of the present paper, and will be developed in a future work.

%\begin{acks}

%\end{acks}

\bibliographystyle{alpha} 
\bibliography{b}

\appendix

\section{Subdivisions}\label{sec:proof-subdiv}
\noindent We give the proofs of \S\ref{subdiv}. For \(t=(t_1,\ldots,t_m)\) we define \(\int t=t_1+\ldots+t_m\). 
\lorderedsubdivisionrestate*
\begin{proof}
A lower \((n-k)\)-face is of the form \(f^-(x)\) with \(f^-\) denoting the composite of a sequence of \(k\) faces of the form \(\delta^-_{i,n'}\) with \(k<n'\leqslant n\). 
The same observation applies to upper \((n-k)\)-faces with \(f^+\) composite of faces of the form \(\delta^+_{i,n'}\). 
Let \(e=(x,t)\) be as in the statement. 
Put \(f^-(x,t)=(x^-,t^-)\) and \(f^+(x,t)=(x^+,t^+)\), and suppose that \((x^-,t^-)=(x^+,t^+)\). 
In particular we have \(t^-=t^+\), therefore \(\int t^-=\int t^+\). 
We denote \(\dim t-\dim t^-\) (which is also \(\dim t-\dim t^+\)) by \(N\). 
We observe that applying an operator of the form \(\delta^-_{i,n}\) reduces the sum of the coordinates by \(\tfrac 1{2s}\) (no matter \(t_j\) is equal to \(\tfrac1{2s}\) or not), while applying an operator of the form \(\delta^+_{i,n}\) either increases the sum of the coordinates by \(\tfrac 1{2s}\) (if \(t_j\not=\tfrac{2s-1}{2s}\)), or reduces it by \(\tfrac{2s-1}{2s}\) (if \(t_j=\tfrac{2s-1}{2s}\)) because the coordinate \(t_j\) is `dropped'. 
So \(N\) is precisely the number of indices \(j\) such that \(t_j=\tfrac{2s-1}{2s}\), and \(k-N\) the number of index \(j\) such that \(t_j\not=\tfrac{2s-1}{2s}\). 
It follows that \(\int t^-=\int t-\tfrac k{2s}\) while \(\int t^+=\int t-\tfrac {N(2s-1)}{2s} + \tfrac{k-N}{2s}\). The equality \(\int t^-=\int t^+\) boils down to \(k=sN\). 
\end{proof}

\condrestatable*
\begin{proof}
We can suppose by subdivision (Corollary~\ref{cor:locally_ordered_subdivision}) that $P$ has the following property: every cube has distinct lower and upper faces. Now if we consider the $\epsilon$-elementary neighborhood $W_\epsilon$ (Definition~\ref{def:wepsilon}) of $v$ for some $\epsilon$, all the cubes of $W_\epsilon$ are adjacent to $c$, in particular if $c$ is a cube in $W_\epsilon$ whose minimum is $v$, then $W_\epsilon$ does not intersect the upper faces of $c$. Since ordered open sets generate the topology, we can find an ordered neighborhood $U\subseteq W_\epsilon$ of $v$. In particular, $U$ has all the properties we just stated for $W_\epsilon$. Now if we consider a cubewise increasing path $\gamma$ in $U$ starting at $x\in \{c\}\times]0,1[^{\dim(c)}\cap U$ with $c$ a cube having $v$ as minimum, if there is $t\in]0,1]$ such that $\gamma(t)\not\in\{c\}\times]0,1[^{\dim(c)}$, then either $\gamma$:
\begin{enumerate}
    \item  went up in dimension, an then it went in a cube which has $c$ on its lower face;
    \item went down in dimension, but then it means that it reached the upper face of the cube which contains it, which is impossible by the above mentioned properties of $U$.
\end{enumerate} 
    \vspace{0.2cm}
This means that $\gamma$ can only go up in dimension, and stays in cubes which have $v$ as minimum. Note that in particular, this is true if $\gamma$ starts at $v$.
\end{proof}

\Wconnectedrestatable*
    \begin{proof}
        The only $n$-cubes of $\frac{1}{s}P$ having $v=(v,())$ as vertex are of the form $(c,(t_1,\dots,t_n))$ with $t_i\in\{\frac 1 2s,\frac {2s-1} 2s\}$ for some cube $c$ of $P$. Now for each such cube, there is at most one $1$-face map which can give $(v,())$, because $1/2s\ne(2s-1)/2s$ since $s>1$.
    \end{proof}

\section{Proof of Claim~\ref{claim}}\label{sec:proof-invariance}
\begin{proof}
   First we claim that $P\cap f(V)$ is $n$-homogeneous. So suppose that there is a $k$-cube $c$ of $P\cap f(V)$, $k<n$ such that $f(V)$ does not intersect any of the cubes of $P$ which have $c$ as $k$-face. By Remark~\ref{rem:wepsilon}, we can find $x\in\{c\}\times ]0,1[^k$ and an open neighborhood $T$ of $x$ such that $T$ only intersects cubes of higher dimension adjacent to $c$, i.e. such that $T\cap f(V)\subseteq \{c\}\times ]0,1[^k$. By continuity, $f^{-1}(T)$ is open in $\R^n$, so we get a continuous injection $f$ from an open subset $V\cap f^{-1}(T)$ of $\R^n$ to an open subset of $\R^k$, which is absurd by (classical) invariance of domain.\\
\\
Since $v$ is simple, for every $n$-cube $c$ with vertex $v$, $\{c\}\times ]0,1[^n\cap W$ is connected. Now we prove that for every $n$-cube $c$ of $P\cap f(V)$, we have $\{c\}\times ]0,1[^n\cap W\subseteq f(B)$. Indeed, consider the set $$Z:=\{x\in \{c\}\times ]0,1[^n\cap W\,|\,x\in f(B)\}$$
Notice that $$Z=\{x\in \{c\}\times ]0,1[^n\cap W\,|\,x\in f(\mathring{B})\}$$
because if $x\in W\cap f(B)$, there is $y\in B$ such that $x=f(y)$, so $f(y)\in W$, but $W\cap f(S)=\varnothing$ by definition of $W$, and so $y\in\mathring{B}$. By the choice of $c$, $Z$ is not empty. So it remains to show that it is open and closed. Notice that by injectivity of $f$, $$Z=f(f^{-1}(\{c\}\times ]0,1[^n\cap W)\cap \mathring{B})$$
but $\{c\}\times ]0,1[^n\cap W$ is homeomorphic to an open subset of $\R^n$ and is open in $|P|$, so $Z$ is open by classical invariance of domain. $Z$ is also closed, because $B$ is compact, which concludes.\\
\\
We mentioned that $f(B)$ is closed. Now notice that since $|P\cap f(V)|$ is $n$-homogeneous, the closure of $(P\cap f(V))_n\times ]0,1[^n\cap W$ is equal to $|P\cap f(V)|\cap \Bar{W}$, where $\Bar{W}$ is the closure of $W$. So the result of the last paragraph gives $|P\cap f(V)|\cap W\subseteq f(B)$. In fact, since $W\cap f(S)=\varnothing$, this means that $|P\cap f(V)|\cap W\subseteq f(\mathring{B})$. But then if $x\in |P\cap f(V)|\cap W$, there is $y\in \mathring{B}$ such that $x=f(y)$, and then $f(y)\in W$ so $y\in f^{-1}(W)$, so $y\in V$ and $x\in f(V)$. This finishes the proof. 
\end{proof}

\section{Proof of Theorem~\ref{explicit_blowup}}\label{sec:proof-local}
\begin{definition}
    For $p\in \{<,>,=\}^n$, define the $p$-cone of $\R^n$ as 
    \[\R^n_p:=\{(x_1,\dots,x_n)\in\R^n\,|\,\forall i\,x_ip_i0\}\]
    We will allow ourselves to represent $p$ as an element of $\{-1,1,0\}^n$ in the obvious way.
\end{definition}

\begin{theorem}
    Let $P$ be a precubical set, $v$ a vertex. The following are equivalent:
    \begin{enumerate}
        \item $(P,v)$ is a local precubical structure of $\R^n$;
        \item $v$ has $n$ edges in and $n$ edges out (counted `with multiplicity', i.e.~if the same edge is in and out, it counts as two edges), labeled respectively by $\{-n,-(n-1),\dots,-1\}$ and $\{1,\dots,n\}$, and such that for every $k\geq 2$, there is exactly one $k$-cube $c(p)$ adjacent to $v$ for every tuple $p$ of $k$ elements of $\{-n,-(n-1),\dots,-1\}\cup\{1,\dots,n\}$ not containing $i$ and $-i$ for any $i$, such that the $1$-faces of $c(p)$ adjacent to $v$ are given by the edges of $p$ (also up to multiplicity, i.e.~we cannot have a cube which has $v$ and the same adjacent edges at two different corners), and such that these are no more cubes adjacent to $v$.
        \item There is a precubical set $B\cong (\cdot\xrightarrow{}\cdot\xrightarrow{}\cdot)^{\otimes n}$ the $n$-fold tensor product and a monomorphism $$B\hookrightarrow{} \frac{1}{3}P$$ 
        sending the central vertex to $v$, and whose image is exactly the precubical set generated by $N(v)$, where $v$ is seen as a vertex of $\frac{1}{3}P$.
    \end{enumerate}
\end{theorem}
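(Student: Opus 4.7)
The plan is to prove the equivalences via the cycle $(3) \Rightarrow (1) \Rightarrow (2) \Rightarrow (3)$.

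For $(3) \Rightarrow (1)$, note that $|(\cdot\to\cdot\to\cdot)^{\otimes n}|_{lo}$ is dihomeomorphic to $([-1,1]^n,\leq^n)$, so around the image of the central vertex (which by hypothesis is $v$) there is an open ordered cube dihomeomorphic to $\R^n$. By Lemma~\ref{cond4}, small enough ordered neighborhoods of $v$ in $|P|_{lo}$ only intersect cubes in $N(v)$; since these cubes (once subdivided by $3$) are precisely the ones in the image of the monomorphism, such a neighborhood is also open in $|P|_{lo}$, yielding the required local structure.

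For $(1) \Rightarrow (2)$, let $f \colon W \to \R^n$ be a dihomeomorphism with $f(v)=0$ and $W$ an ordered open neighborhood of $v$ in $|P|_{lo}$. First I pass to a sufficient subdivision of $P$, using Lemma~\ref{W_connected}, so that $v$ is $n$-simple (the realization is unchanged). Applying Lemma~\ref{invariance_domain} to the injection $f^{-1}\colon \R^n \to |P|$ and shrinking $W$, I may assume $W$ is open in $|P\cap W|_{lo}$, whence $P\cap W$ is $n$-homogeneous. Each edge of $P$ incident to $v$ and meeting $W$ provides a directed half-arc through $v$; under $f$ it becomes a directed curve through $0$, which by the product order on $\R^n$ must lie in a single open coordinate half-axis. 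Two distinct edges cannot fit along the same half-axis, because $\R^n$ has only one maximal embedded directed $1$-manifold component through $0$ in each such direction. Hence the edges at $v$ are in bijection with the $2n$ open coordinate half-axes of $\R^n$, giving the claimed labelling. Higher cubes are then identified by induction on dimension: each $k$-cube adjacent to $v$ maps, via $f$, into a unique open $k$-cone at $0$, and $n$-homogeneity together with the bijection on edges forces exactly one cube for each admissible $k$-tuple of axis labels, with face relations dictated by cone containment.

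For $(2) \Rightarrow (3)$, I construct the monomorphism $\iota$ explicitly. By hypothesis, for each $k\geq 0$ and each admissible $k$-tuple $p$, $N(v)$ contains exactly one $k$-cube $c(p)$. In $\frac{1}{3}P$, each such $c(p)$ is decomposed into $3^k$ subcubes along its axes, and the unique subcube of $c(p)$ touching $v$ corresponds canonically to a $k$-cube of $(\cdot\to\cdot\to\cdot)^{\otimes n}$ incident to its central vertex (the ``middle-third'' subcube along each axis towards $v$). Assembling these assignments across all $c(p)$, using the face relations prescribed by $(2)$ to ensure compatibility along common faces, produces a morphism $\iota\colon(\cdot\to\cdot\to\cdot)^{\otimes n} \to \frac{1}{3}P$ whose image is exactly the precubical set generated by the cubes of $\frac{1}{3}P$ adjacent to $v$. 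The bijection between the labels $\{-n,\ldots,-1\}\cup\{1,\ldots,n\}$ and the edges at $v$ ensured by $(2)$ makes $\iota$ a monomorphism.

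The main obstacle will be the rigidity step inside $(1) \Rightarrow (2)$: verifying that each edge at $v$ corresponds to a \emph{distinct} coordinate half-axis of the local $\R^n$ model (ruling out ``loop-like'' identifications) and that the combinatorial face relations among cubes adjacent to $v$ exactly match those of the standard $n$-cone decomposition of $\R^n$ at $0$. Invariance of domain via Lemma~\ref{invariance_domain} supplies $n$-homogeneity, but the combinatorial matching still requires careful use of condition (iv) of the locally ordered realization (Lemma~\ref{cond4}) together with the fact that in $\R^n$ every directed path through $0$ is tangent to a well-defined coordinate cone.
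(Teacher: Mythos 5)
Your directions $(3)\Rightarrow(1)$ and $(2)\Rightarrow(3)$ match the paper's proof in substance: the first is the observation that an elementary neighborhood $W_\epsilon(v)$ computed in $\tfrac13P$ is an open ordered cube, and the second is the same explicit construction of the monomorphism (the paper writes it out with coordinates $t_a=5/6$, $t_b=1/6$ and then still has to check compatibility with the face maps and injectivity on vertices, which you should not omit). The problem is in $(1)\Rightarrow(2)$, which is where all the real work lies, and your argument there has a genuine gap.

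The claim that the image under $f$ of an edge at $v$ ``by the product order on $\R^n$ must lie in a single open coordinate half-axis'' is not a valid deduction: a directed curve through $0$ in $(\R^n,\leqslant^n)$ can perfectly well run along the diagonal, or along any monotone curve inside the positive cone, so directedness alone does not pin the image of an edge to an axis. The paper's argument is structured quite differently to get around this. It first shows there is exactly \emph{one} $n$-cube $c_+$ with minimum $v$ (using that any two points of the positive cone of $\R^n$ have an upper bound, whereas interior points of two distinct positive $n$-cubes have none in $W$, by condition (iv) of the definition of the locally ordered realization); then that the only edges at $v$ are edges of $c_+$ and $c_-$ (otherwise $n$-homogeneity would produce a second positive $n$-cube); and only then that the edges of $c_+$ land on the axes, via the order-theoretic characterization ``the strict down-set of $x$ is dihomeomorphic to an interval,'' which holds exactly for axis points of the positive cone and exactly for edge points of $|c_+|\cap W$ (again by condition (iv)). Similarly, your assertion that $n$-homogeneity plus the edge bijection ``forces exactly one cube for each admissible $k$-tuple'' skips the hardest uniqueness step: the paper rules out two distinct $k$-cubes with the same adjacent edges by taking interior points of each, noting that their images admit an upper bound joined to them by directed paths staying inside the open cone, and deriving a contradiction from the absence of such directed paths in the realization; and it rules out cubes containing both labels $i$ and $-i$ by exhibiting a directed segment between two points of the same axis that leaves the axis. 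None of these steps follows from invariance of domain or from ``tangency to a coordinate cone''; they all need the order structure via conditions (iii) and (iv). As written, your $(1)\Rightarrow(2)$ is therefore incomplete.
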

\begin{proof}
    (3) clearly implies (1) (consider $W_\epsilon(v)$). We start by showing that (2) implies (3). We define a monomorphism $\iota:T:=(-1\xrightarrow{a}0\xrightarrow{b}1)^{\otimes n}\xrightarrow{}\frac{1}{3}P$ as follows: 
    \begin{enumerate}
        \item $\iota(0,\dots,0):=v$;
        \item $\iota(0,\dots,0,a,0,\dots,0):=(-k,(5/6))$ and\\
         $\iota(0,\dots,0,b,0,\dots,0):=(k,(1/6))$) where $a$ (respectively $b$) is in position $k$ (recall that $-k$ (resp. $k$) is the label of an edge with target (resp. source) $v$);
        \item for every tuple $p\in\{a,0,b\}^n$, $\iota(p)=(c(p'),(t_{p_i})_{i\in\{1,\dots n\}})$ where $p'$ is obtained from $p$ by replacing every $a$ (resp. $b$) in position $k$ by $-k$ (resp. $k$) and forgetting the $0$s, and where $t_{a}:=5/6$, $t_b:=1/6$ and $t_0$ is omitted.
    \end{enumerate} 
    \vspace{0.2cm}
    (recall that we are working up to symmetry, so we do not care about the order of the dimensions). This is enough since $T$ is $n$-homogeneous. We still need to check that this is compatible with the face maps, but this is just a formal exercise. It is also easy to check that all the vertices in the image are distinct, which proves that this is indeed a monomorphism, because if $\iota(c)=\iota(c')$ for some $k$-cubes $c,\,c'$, then at least $2^k+1$ vertices are sent to $2^k$ vertices, so some vertices are identified, which contradicts what we just said. Since $(P,v)$ satisfies (2) of Theorem~\ref{explicit_blowup}, then by construction all the neighboring cubes of $v$ in $\frac{1}{3}P$ are in the image of $\iota$.\\
    \\
    We still need to prove (1) implies (2). Fix a dihomeomorphism $f:W\xrightarrow{}\R^n$. In this proof, we work up to multiplicity. First, we show that there is exactly one $n$-cube which has $v$ as minimum (resp. maximum), which we will call the positive (respectively negative) cubes. By conditions (iii) and (iv) of \S\ref{sec:lorps} and the fact that $f$ is order preserving and order reflecting, the positive cone of $\R^n$ (i.e.~$\R^n_+:=\{(x_1,\dots,x_n)\in\R^n\,|\,\forall i,\,x_i\geq 0\}$) is exactly the image by $f$ of all the cubes (of any dimension) which have $v$ as minimum. Obviously, there is at least one such cube of dimension $n$ (by invariance of domain), so we need to prove that there is at most one. Now notice that the positive cone of $\R^n$ has the property that every two elements have an upper bound, which is just an upper bound pointwise. Suppose that there are more than two $n$-cubes which have $v$ as minimum. Choose two such cubes $c$ and $c'$. We take $y\in \{c\}\times]0,1[^n$, $y'\in \{c'\}\times]0,1[^n$, and we claim that they cannot have an upper bound. Indeed, since $n$ is the dimension of $P$ and $U$ satisfies condition (iv) of \S\ref{sec:lorps}, the elements which are bigger than $y$ (respectively $y'$) in $W$ all belong to $\{c\}\times]0,1[^n$ (respectively $\{c'\}\times]0,1[^n$), so $y$ and $y'$ cannot have an upper bound in $W$. But $f$ preserves and reflects upper bounds, so we get a contradiction. The same proof works in the negative case. We write $c_+$ and $c_-$ for the positive and negative cube respectively.\\ 
    \\
    Now we prove that the only edges in $W$ are the edges of the positive and negative cubes. Indeed, suppose that there is an edge which goes out of $v$, which is not one of the edges of $c_+$. Notice that the cubes having $v$ as minimum generate a subprecubical set of $P$ which, by the same arguments as in the proof of Lemma~\ref{invariance_domain}, is $n$-homogeneous. So the existence of this additional edge implies the existence of another positive ($n$-)cube, which is a contradiction. Again the same is true in the negative case. Notice that the same argument shows that every $k$-cube $c$ whose edges in $W$ are all of positive (resp. negative), $c$ is a $k$-face of $c_+$ (respectively $c_-$).\\
    \\
    We have proved that there is a cube $c_+$ that is exactly sent to the positive cone of $\R^n$. Now we claim that the edges of $c_+$ are exactly mapped to the (positive) axis of $\R^n$. This comes from the fact that in both cases, the corresponding points are characterized by the following property 
    \begin{center}
        $F(x, K)$: the set of points of $K$ that are strictly lower than $x$ is dihomeomorphic to an interval
    \end{center}
    Indeed, take a point $x$ on one of the axis of $\R^n$, then all its coordinates are zero except one, say $x=(x_1,0,\dots,0)$, so the only points of the positive cone which are lower than $x$ are those of the form $(x_1',0,\dots,0)$ with $x_1'\leq x_1$. This proves $F(x,\R^n_+)$. Now for $x\in\{e\}\times]0,1[\cap U$ for $e$ a positive edge, any element $y\in|c_+|\cap U$ which is smaller than $x$ belongs to $\{e\}\times]0,1[\cap U$, because otherwise a cubewise increasing path from $y$ to $x$ would necessarily go through some cube of higher dimension, which is impossible by condition (iv) of \S\ref{sec:lorps}. This proves $F(x,|c_+|\cap U)$. Now for an edge $e$ of $c_+$, we need to prove that $|e|\cap W$ is connected: otherwise, there are $a,b\in |e|\cap W$ with no directed path connecting them in $|c_+|\cap W$, because the only one that does is included in $|e|$ (again by condition (iv) of \S\ref{sec:lorps}), but supposing without loss of generality that $a\leq b$, we get $f(a)\leq f(b)$ in $\R^n$, so there is a directed path between them in the positive cone, which can be pulled back to a directed path in $|c_+|\cap W$, which is a contradiction. This proves $F(x,|c_+|\cap W)$. So both the points of the edges of $c_+$ and the points of the positive axis of $\R^n$ satisfy the property, and they are clearly the only ones that do. A dihomeomorphism respects this property (more precisely $F(x,K)\leftrightarrow F(f(x),f(K))$ if $f$ is a dihomeomrphism on $K$), so the claim is proved. Notice that by continuity, every edge of $c_+$ is mapped to exactly one axis, and by bijectivity all the axes are in the image, so all the edges of $c_+$ are distinct. As a consequence, each face is mapped to the corresponding face of $\R^n$: for example in dimension 3, if $c$ is a positive $2$-cube of edges $e_1$ and $e_2$, the third one being $e_3$ (and supposing that $f$ sends $e_i$ to the $i$th axis of $\R^3$), a point $(c,(x,y))$ of the positive cone will be mapped to $(x',y',z'):=f((c,(x,y)))$ in the positive cone of $\R^3$. Now $f((c,(x,y)))$ is the least upper bound of $(x',0,0)$, $(0,y',0)$ and $(0,0,z')$, so $(c,(x,y))$ is the least upper bound of their preimages, which are on the axes $e_i$. But then we get that these are respectively $(e_1,x)$, $(e_2,y)$ and $0$, and since $f$ is injective this means that $z'=0$, so indeed $f(x,y,0)$ lands in the correct face. This argument works in the general case.\\
    \\
    Now we label the edges following the axis to which they are mapped. We still need to prove the part about the $k$-cubes in definition~\ref{canonical}. For better readability, we give negative coefficients to negative edges in the realization (i.e.~we replace $\{c\}\times]0,1[^{k}$ with $k=\dim(c)$ by $\{c\}\times]0,1[^{k_1}\times]-1,0[^{k_2}$ (up to permutation), where $k_1$ and $k_2$ are respectively the number of positive and negative edges of the cube $c$), only in this paragraph. First we show that for a tuple of edges containing $i$ and $-i$ for some $i$, i.e.~a set of edges such that one is sent to a positive axis and another one to the corresponding negative axis, there cannot be a cube having these edges as $1$-faces. It suffices to prove that there cannot be a $2$-face between edges $i$ and $-i$ for some $i$. Suppose the contrary: for some label $i$, there is a $2$-face $c$ between $i$ and $-i$. Consider two symmetrical points $(i,x)$ and $(-i,-x)$ on the edges $i$ and $-i$ respectively (recall that $i$ is out and $-i$ is in). The segment $$\gamma:t\in]0,x[\mapsto (c,(x-t,-t))$$ is directed from $(i,x)$ to $(-i,-x)$, so it corresponds to a directed segment in $\R^n$. However, the only directed segment, up to parametrization, between two points on the same axis in $\R^n$ is contained in this axis, but $\gamma$ is not contained in the preimage of the axes, so we got our contradiction. \\
    \\
    We still need to show that given a tuple of $k$ labels not containing $i$ and $-i$ for some $i$, there is exactly one $k$-cube having these as $1$-faces. Take $(i_1,\dots,i_k)$ tuple of labels, $c$ a $k$-cubes having exactly these as $1$-faces. Consider a point $(c,(t_1,\dots,t_k))$. Suppose for simplicity that $i_1,\dots,i_p$ are positive edges and $i_{p+1},\dots,i_k$ are negative edges. As remarked earlier, there is only one cube $c_1$ (respectively $c_2$) having $i_1,\dots,i_p$ (respectively $i_{p+1},\dots,i_k$) as edges. Let $x_+$ and $x_-$ be the 'projections' of $x$ on these cubes, i.e.~$x_+=(c_1,(t_1,\dots,t_p))$ and $x_-=(c_2,(t_{p+1},\dots,t_k))$. Since $U$ is order convex, we have $x_-< x< x_+$. Now $x_-$ and $x_+$ are sent respectively to a point $(0,\dots,0,y_{p+1},\dots,y_n)$, $y_{p+1},\dots,y_n<0$ and $(y_1,\dots,y_p,0,\dots,0)$, $y_1,\dots,y_p>0$ of $\R^k$, seen as a subset of $\R^n$. Since $f$ is order preserving, $$f(x)\in\{(z_1,\dots,z_k)\in\R^k\,|\,z_1,\dots,z_p> 0,\,z_{p+1},\dots,z_k< 0\}$$ (note that it does belong to $\R^k$ seen as a subset of $\R^n$, because for any other coordinate $j$, $0=f(x_-)_j\leq (f(x))_j\leq f(x_+)_j=0$, so $(f(x))_j=0$ as well). This shows that all the cubes of $P$, of different dimensions and for different choices of signs of the coordinates, are exactly sent to the corresponding cone (i.e.~'with the same signs') in $\R^n$. So for every such choice, there is at least one cube filling the edges, else $f$ would not be surjective. Moreover, suppose that there exists $(i_1,\dots,i_k)$ tuple of labels, $c_1$ and $c_2$ distinct $k$-cubes having exactly these as $1$-faces. Take $x_1\in\{c_1\}\times]0,1[^k$, $x_2\in\{c_2\}\times]0,1[^k$. $f(x_1)$ and $f(x_2)$ have an upper bound $z$ in the interior of the corresponding cone of $\R^n$, with two directed paths $\gamma_1:f(x_1)\xrightarrow{}z$ and $\gamma_2:f(x_2)\xrightarrow{}z$ entirely contained in the interior of this cone, which is the preimage of $C\times]0,1[^k$ with $C$ the set of cubes having $(i_1,\dots,i_k)$ as edges. But $z=f(y)$ for some $y$, and $y$ necessarily belongs to $\{c_3\}\times]0,1[^k$ for some $c_3$ having the same edges, but there is no directed path joining for example $c_1$ and $c_3$ and staying in $C\times]0,1[^k$, contradiction.
\end{proof}

\section{Proof of Proposition~\ref{prop:comb}}\label{sec:proof-comb}
\begin{lemma}
    Let $P$ be a precubical set of dimension $n$, $c$ a $k$-cube, $0\leq i < k$, $c':=\delta^\epsilon_{k,i}(c)$ the $(k-1)$-face of $c$ in direction $i$. Given $(Q,m(c'))$ a local precubical structure of $\R^n$ in $\frac{1}{2}P$, seen as subobject $B$ of $\frac{1}{6}P$ isomorphic to $(\cdot\xrightarrow{}\cdot\xrightarrow{}\cdot)^{\otimes n}$, let 
    $$S:=\{c'\in \bigsqcup_m Q_m\subseteq \bigsqcup_m\frac{1}{6}P_m\,|\,\pi_1(c')=c\}$$ 
    be the set of cubes of $Q$ whose underlying cube in $P$ is $c$. If $S$ contains the edge $e$ `going inside $c$ in direction $i$', namely $$e=(c,(1/2,\dots,1/2,z,1/2,\dots,1/2))$$ where $z$ is in position $i$ and $z=1/12$ (resp. $11/12$) if $\epsilon=-$ (resp. $+$), then the precubical set $\psi_{P,c,i,\epsilon}(Q,m(c'))$ obtained by `shifting $S$ to the center of $c$', namely the precubical set spanned by 
    \begin{align*}
    \{ & (c, (t_1,\dots, t_{i-1},t_i-\epsilon.1/6,t_{i+1}\dots,t_n)),\\
       & (c, (t_1,\dots, t_{i-1},t_i-\epsilon.1/2,t_{i+1}\dots,t_n))\,|\\
       & (c, (t_1,\dots,t_n))\text{ is an }n\text{-cube of }S\}
    \end{align*}
    is a local precubical structure of $\R^n$. Else $S$ is empty.
\end{lemma}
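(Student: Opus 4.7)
The plan is to reduce both claims to a combinatorial analysis via Theorem~\ref{explicit_blowup}. By that theorem together with Remark~\ref{example}(2), since $\frac{1}{6}P = \frac{1}{3}(\frac{1}{2}P)$, the given lps $(Q, m(c'))$ corresponds to a monomorphism $\iota : B \hookrightarrow \frac{1}{6}P$ with $B \cong (\cdot \to \cdot \to \cdot)^{\otimes n}$ and $\iota(0, \ldots, 0) = m(c')$. The explicit formula from the proof of Theorem~\ref{explicit_blowup} sends each of the $2n$ edges of $B$ adjacent to the center to an image of the form $(\tilde{k}, (1/6))$ or $(\tilde{k}, (5/6))$ in $\frac{1}{3}(\frac{1}{2}P)$, where $\tilde{k}$ ranges over the $2n$ edges of $\frac{1}{2}P$ adjacent to $m(c')$. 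The edge $e$ appearing in the statement is exactly of this form, corresponding to the edge of $\frac{1}{2}P$ from $m(c')$ pointing into $c$ along direction $i$ with sign $-\epsilon$.

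For the second claim, I would argue the contrapositive. Assume $S$ is nonempty and pick a cube $x$ of $Q$ with $\pi_1(x) = c$. Since $m(c') = \iota(0,\ldots,0)$ is a face of every cube of $B$ adjacent to the center, and since $m(c')$ is $n$-simple in $\frac{1}{2}P$ by Lemma~\ref{W_connected}, we may follow a chain of faces of $x$ in $Q$ down to an edge of $\iota(B)$ whose image still lies over $c$. Using the explicit description of the central edges together with the face-operator formulas of Section~\ref{subdiv}, the only edge of $\frac{1}{2}P$ at $m(c')$ whose first third in $\frac{1}{3}(\frac{1}{2}P)$ sits over $c$ with the coordinates prescribed by the lemma is the edge pointing into $c$ along direction $i$ with sign $-\epsilon$; its image in $\frac{1}{6}P$ is precisely $e$, so $e \in S$, contradicting the assumption.

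For the first claim, assume $e \in S$. Since we work in the category of symmetric precubical sets, I may relabel so that $e$ is the image under $\iota$ of the central $+i$-edge of $B$, namely $(0, \ldots, 0) \to (0, \ldots, 0, b, 0, \ldots, 0)$ with $b$ in position $i$. A direct check using the formula for $\iota$ then identifies the $n$-cubes of $B$ whose $\iota$-image has underlying $P$-cube equal to $c$ as exactly those with $b$ in position $i$, i.e.~the $2^{n-1}$ top-dimensional cubes of the positive $i$-half of $B$. Applying the two shifts specified by $\psi$ in direction $i$ to these $n$-cubes produces the $2^n$ top-dimensional cubes of a new embedding $B' \hookrightarrow \frac{1}{6}P$ with $B' \cong (\cdot \to \cdot \to \cdot)^{\otimes n}$ and centered at $m(c)$: in direction $i$, the two shifted half-fractions provide the two edges of $\cdot \to \cdot \to \cdot$ at the central vertex $m(c)$, while in the other directions the coordinates inherited from $\iota$ furnish the full $(\cdot \to \cdot \to \cdot)$ structure around $m(c)$ inside the subdivision of $c$. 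Theorem~\ref{explicit_blowup} applied in reverse then yields that $\psi_{P, c, i, \epsilon}(Q, m(c'))$ is an lps of $\R^n$ at $m(c)$.

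The hardest part will be the bookkeeping needed to identify the shifted cubes with a genuine $(\cdot \to \cdot \to \cdot)^{\otimes n}$-embedding at $m(c)$: one must verify that the two shifted half-fractions in direction $i$ combine with the unchanged half-fractions in the other directions into a valid monomorphism, and that the faces of these $n$-cubes assemble consistently to form the full subprecubical set required by Theorem~\ref{explicit_blowup}. This relies on the face-operator formulas of Section~\ref{subdiv} and on Lemma~\ref{cond4} to ensure that only the relevant adjacent cubes come into play.
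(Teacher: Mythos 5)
Your strategy is genuinely different from the paper's. The paper's proof is topological: it notes that $e\in S$ precisely when the realization of the cubes of $Q$ adjacent to $m(c')$ meets the open cell $\{c\}\times]0,1[^{\dim(c)}$, transports the euclidean neighborhood of $m(c')$ along that cell to obtain an embedding of $\R^n$ centred at $m(c)$, then invokes Lemma~\ref{invariance_domain}, Lemma~\ref{W_connected} and Theorem~\ref{explicit_blowup} to produce \emph{some} combinatorial lps $Q'$ at $m(c)$, and finally identifies $Q'$ with $\psi_{P,c,i,\epsilon}(Q,m(c'))$ by uniqueness of its underlying $n$-cubes. You instead attempt the purely combinatorial computation that the paper only mentions in passing as an alternative (``explicit calculations on the subdivisions''). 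That is a legitimate plan, but as written your argument defers to ``bookkeeping'' exactly the steps that constitute the proof in this approach, so nothing is actually established.

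Three of the deferred steps are missing ideas or would fail as stated. First, for ``else $S$ is empty'' you propose to descend from an arbitrary $x\in S$ down a chain of faces to an edge over $c$ at $m(c')$; but $x$ need not be adjacent to $m(c')$ (not every cube of $(\cdot\to\cdot\to\cdot)^{\otimes n}$ contains the central vertex), and if $x$ is a vertex it has no proper faces to descend through. One must first go \emph{up} to an $n$-cube of $Q$ (these are all adjacent to $m(c')$ since $Q$ is $n$-homogeneous) and then argue, using $\dim P=n$, that an $n$-cube of $\frac{1}{6}P$ adjacent to $m(c')$ with a face over $c$ lies over $c$ and has $e$ as a face. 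Second, your identification of the $n$-cubes of $S$ with the $2^{n-1}$ top cubes of the positive $i$-half of $B$ is asserted rather than proved; it rests on the same dimension argument plus the fact that the $n$-cube of $Q$ in each cone is determined by the edges of $Q$ at $m(c')$, and it silently assumes $\dim(c)=n$ (for $\dim(c)<n$ the set of $n$-cubes of $S$ is empty and your count is wrong). Third, if one actually performs the arithmetic of \S\ref{subdiv}, the family shifted by $1/6$ consists (for $\epsilon=-$) of edges with $i$-th coordinate $3/12$, i.e.~running from $1/6$ to $1/3$ of $c$, which are \emph{not} adjacent to $m(c)=(c,(1/2,\dots,1/2))$; a shift of $1/3$ is what lands on the incoming edge at $m(c)$. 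So your claim that ``the two shifted half-fractions provide the two edges at the central vertex $m(c)$'' fails for the constants as given, and the spanned precubical set is not even connected in direction $i$. This is precisely the kind of discrepancy the omitted verification is there to catch, which is why it cannot be waved away as routine.
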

    \begin{proof}
        $S_i$ contains $e$ if and only if the realization of the cubes of $Q$ adjacent to $m(c')$ intersects $\{c\}\times]0,1[^{\dim(c)}$. In this case, for any point $x\in \{c\}\times]0,1[^{\dim(c)}$, there exists an embedding of $\R^n$ centered at $x$. This is in particular true for $m(c)$. Then by Lemma~\ref{invariance_domain} (and Lemma~\ref{W_connected}) and Theorem~\ref{explicit_blowup}, there is a precubical subset $Q'$ such that $(Q',m(c))$ is a local precubical structure of $\R^n$. But the $n$-cubes of $Q'$ necessarily have the same underlying $n$-cubes in $P$ as $S_i$, and $\psi_{P,c,i,\epsilon}(Q,m(c'))$ is the only possible such. 
    \end{proof}

    Note that we can also give a proof using explicit calculations on the subdivisions. Now the transition morphisms of $\mathsf{Comb}_P$ are defined in the following way: for a morphism $d^\epsilon_{k,i}:(k-1,c')\to (k,c)$ of $\int P$ (with $c':=\delta^\epsilon_{k,i}(c)$), given $x\in \mathsf{Comb}_P(k-1,c')$, if $\psi_{P,c,i,\epsilon}(x)$ is defined, set $\mathsf{Comb}_P(d^\epsilon_{k,i})(x):=(\psi_{P,c,i,\epsilon}(x),m(c))$. Else set $\mathsf{Comb}_P(d^\epsilon_{k,i})(x):=\bot$. This is well-defined: one just needs to check that the restriction maps respect the cubical relations, which is formal.

\section{Proof of Proposition~\ref{local_dipath_lift}}\label{sec:proof-lift}
\begin{proof}
Suppose (i) is true, and take $\gamma$ and $x$ as in (ii). By (i), there is $[A]\in \oindexp{X,x}{n}$ and $z\in \oindexp{\R^n,0}{1}$ such that $z^*([A])= (\Im{\gamma})_{\gamma(x)}$. Now this means that there is an open set $V\subseteq X$ containing $\gamma(x)$ such that $A\cap V=\Im{\gamma}\cap V$. So $x\mapsto A_{\gamma(x)}$, which a morphism of local orders because it is the composite of two such, is a lift of $\gamma$ on $\gamma^{-1}(V)\ne\varnothing$, which is what we wanted. 

Conversely, let $[a]\in \oindexp{X,x}{1}\setminus\{A_x\,|\,x\not\in A\}$. Take $\gamma$ a representative of $a$, which we can take to be a directed path $\R\xrightarrow{}X$ with $\gamma(0)=x$. By (ii) we can lift it to $\Tilde{X}$ (up to restriction), say by $\Tilde{\gamma}$, then we can take an open neighborhood $A$ of $\Tilde{\gamma}(0)$. Then $U:=\Tilde{\gamma}^{-1}(A)$ is an open subset of $\dom(\Tilde{\gamma})$, so also of $\dom(\gamma)$. $\Tilde{X}$ is locally euclidean, so we can suppose that $A$ is isomorphic to an open subset of $\R^n$. Now (up to translation) composing this isomorphism with $\Tilde{\gamma}$ gives an element $z$ of $\oindexp{\R^n,0}{1}\setminus\{A_0\,|\,0\not\in A\}$, and we clearly have $z^*(A_x)=[a]$, which concludes.
\end{proof}

\end{document}